\newtheorem{theorem}{Theorem}[section]
\newtheorem{lemma}[theorem]{Lemma}
\newtheorem{proposition}[theorem]{Proposition}
\theoremstyle{definition}
\newtheorem{definition}[theorem]{Definition}
\newtheorem{example}[theorem]{Example}
\theoremstyle{remark}
\numberwithin{equation}{section}
\begin{document}
\title{The Fourier transform of thick distributions}
\subjclass[2010]{46F10, 42B10}
\author{Ricardo Estrada}
\address{R. Estrada, Department of Mathematics\\
Louisiana State University\\
Baton Rouge, LA 70803\\
U.S.A.}
\email{restrada@math.lsu.edu}
\author{Jasson Vindas}
\address{J. Vindas, Department of Mathematics: Analysis, Logic and Discrete Mathematics\\
Ghent University\\
Krijgslaan 281, Building S8\\
B 9000 Ghent, Belgium}
\email{jasson.vindas@ugent.be}
\author{Yunyun Yang}
\address{Y. Yang, School of Mathematics\\
Hefei University of Technology\\
Hefei 230009, China}
\email{yangyunyun@hfut.edu.cn}
\thanks{J. Vindas was supported by Ghent University through the BOF-grants 01J11615
and 01J04017.}
\thanks{Y. Yang was supported through the grant 407-0371000086 from Hefei University
of Techonology.}
\keywords{Thick distributions, Hadamard finite part, Fourier transform, thick delta functions}

\begin{abstract}
We first construct a space $\mathcal{W}\left(  \mathbb{R}_{\text{c}}%
^{n}\right)  $ whose elements are test functions  defined in $\mathbb{R}%
_{\text{c}}^{n}=\mathbb{R}^{n}\cup\left\{  \mathbf{\infty}\right\}  ,$ the one
point compactification of $\mathbb{R}^{n},$ that have a thick expansion at
infinity of special logarithmic type, and its dual space $\mathcal{W}^{\prime
}\left(  \mathbb{R}_{\text{c}}^{n}\right)  ,$ the space of $sl-$thick
distributions. We show that there is a canonical projection of $\mathcal{W}%
^{\prime}\left(  \mathbb{R}_{\text{c}}^{n}\right)  $ onto $\mathcal{S}%
^{\prime}\left(  \mathbb{R}^{n}\right)  .$ We study several $sl-$thick
distributions and consider operations in $\mathcal{W}^{\prime}\left(
\mathbb{R}_{\text{c}}^{n}\right)  .$

We define and study the Fourier transform of thick test functions of
$\mathcal{S}_{\ast}\left(  \mathbb{R}^{n}\right)  $ and thick tempered
distributions of $\mathcal{S}_{\ast}^{\prime}\left(  \mathbb{R}^{n}\right)  .$ We
construct isomorphisms
\[
\mathcal{F}_{\ast}:\mathcal{S}_{\ast}^{\prime}\left(  \mathbb{R}^{n}\right)
\longrightarrow\mathcal{W}^{\prime}\left(  \mathbb{R}_{\text{c}}^{n}\right)
\,,
\]%
\[
\mathcal{F}^{\ast}:\mathcal{W}^{\prime}\left(  \mathbb{R}_{\text{c}}%
^{n}\right)  \longrightarrow\mathcal{S}_{\ast}^{\prime}\left(  \mathbb{R}%
^{n}\right)  \,,
\]
that extend the Fourier transform of tempered distributions, namely,
$\Pi\mathcal{F}_{\ast}=\mathcal{F}\Pi$ and $\Pi\mathcal{F}^{\ast}%
=\mathcal{F}\Pi,$ where $\Pi$ are the canonical projections of $\mathcal{S}%
_{\ast}^{\prime}\left(  \mathbb{R}^{n}\right)  $ or $\mathcal{W}^{\prime
}\left(  \mathbb{R}_{\text{c}}^{n}\right)  $ onto $\mathcal{S}^{\prime}\left(
\mathbb{R}^{n}\right)  .$

We determine the Fourier transform of several finite part regularizations and
of general thick delta functions.

\end{abstract}
\maketitle

\section{Introduction\label{Section: Introduction}}

The aim of this article is to construct the Fourier transform of \emph{thick
tempered distributions }in several variables. Thick distributions were
introduced in one variable in \cite{EF07} and in several variables in
\cite{YE2, YE3, YEnovisad, YE5}. Thick distributions have found applications
in understanding problems in several areas, such as quantum field theory
\cite{BondurantFulling}, engineering \cite{Paskusz, Vibet}, the understanding
of singularities in mathematical physics as considered in \cite{Blinder} or in
\cite{Bowen}, or in obtaining formulas for the regularization of multipoles
\cite{Estrada16, Parker} that play a fundamental role in the ideas of the late
professor Stora on convergent Feyman amplitudes \cite{NST, Varily-Gracia}.
They also appear in other problems, as generalizations of\ Frahm formulas
\cite{Frahm} involving discontinuous test functions \cite{Franklin, YE3}.
Thick distributions are the distributional theory corresponding to the theory
given by Blanchet and Faye \cite{BlanchetFaye}, whose aim is the study of the
dynamics of point particles in high post-Newtonian approximations of general
relativity \cite{Blanchet-Faye-Nissanke} and who develop such a scheme in the
context of finite parts, pseudo-functions and Hadamard regularization, as
studied by Sellier \cite{Sellier2, Sellier3}. In this article we consider
spaces with one thick point, located at the origin, but it is possible to
consider spaces with a finite number of such singular points.

The Fourier transform of one-dimensional thick distributions with one special
point at the origin was given in \cite{EF07}. The transform of thick
distributions is shown to belong to a space $\mathcal{W}^{\prime}$ of
distributions on the space $\mathbb{R}_{\text{c}}=\mathbb{R\cup}\left\{
\infty\right\}  ,$ the one point compactification of the real line.\ Employing
this thick Fourier transform it is possible to understand several puzzles,
particularly those found in \cite{BondurantFulling}.

The theory of thick distributions in higher dimensions \cite{YE2}\ is quite
different from that in one dimension, because the topology of $\mathbb{R}%
^{n}\setminus\left\{  \mathbf{0}\right\}  ,$ $n\geq2,$ is quite unlike that of
$\mathbb{R}\setminus\left\{  0\right\}  ,$ since the latter space is
disconnected, consisting of two unrelated rays, while the former is connected,
all directions of approach to the point $\mathbf{0}$ are related, and such
behavior imposes strong restrictions on the singularities. Therefore the thick
Fourier transform in several variables cannot be constructed as a
straightforward extension of the transform in one variable; such construction
in several variables, the Fourier transform in $\mathcal{S}_{\ast}^{\prime
}\left(  \mathbb{R}^{n}\right)  ,$ is the main aim of this article.

In Section \ref{Section:Preliminaries} we review some useful results from the
theory of thick distributions and then in Section
\ref{Section: Some Fourier Transforms} we collect the Fourier transform of
several tempered distributions in order to find \ the asymptotic expansion of
the Fourier transform of finite part regularizations of thick test functions.
Taking into account the asymptotic behavior of such Fourier transforms we
construct a space $\mathcal{W}\left(  \mathbb{R}_{\text{c}}^{n}\right)  $
whose elements are test functions defined in $\mathbb{R}_{\text{c}}%
^{n}=\mathbb{R}^{n}\cup\left\{  \mathbf{\infty}\right\}  ,$ the one point
compactification of $\mathbb{R}^{n},$ that have a thick expansion at infinity
of special logarithmic type. We are thus able in Section
\ref{Section: The Fourier transform of thick test functions}\ to define
Fourier transform operators $\mathcal{F}_{\ast,\text{t}}$\ and $\mathcal{F}%
_{\text{t}}^{\ast},$ topological isomorphism of $\mathcal{S}_{\ast}\left(
\mathbb{R}^{n}\right)  $ to $\mathcal{W}\left(  \mathbb{R}_{\text{c}}%
^{n}\right)  $ and from $\mathcal{W}\left(  \mathbb{R}_{\text{c}}^{n}\right)
$ to $\mathcal{S}_{\ast}\left(  \mathbb{R}^{n}\right)  ,$ respectively; the
subscript `t' is used because these are the transforms of test functions.

We study the dual space $\mathcal{W}^{\prime}\left(  \mathbb{R}_{\text{c}}%
^{n}\right)  ,$ the space of $sl-$thick distributions in Section
\ref{Section: The space W'}. We consider the basic operations in
$\mathcal{W}^{\prime}\left(  \mathbb{R}_{\text{c}}^{n}\right)  ,$ such as
linear changes of variables, derivatives, and multiplication by polynomials.
We study several $sl-$thick distributions, particularly the finite part
regularization at infinity of power functions and thick delta functions at
infinity. We are therefore able in Section
\ref{Section: The Fourier transform of thick distributions}\ to define and
study the Fourier transform of thick test tempered distributions of
$\mathcal{S}_{\ast}^{\prime}\left(  \mathbb{R}^{n}\right)  .$ We construct
isomorphisms
\[\mathcal{F}_{\ast}:\mathcal{S}_{\ast}^{\prime}\left(  \mathbb{R}^{n}\right)
\longrightarrow\mathcal{W}^{\prime}\left(  \mathbb{R}_{\text{c}}^{n}\right)
\,,
\end{equation*}
\[
\mathcal{F}^{\ast}:\mathcal{W}^{\prime}\left(  \mathbb{R}_{\text{c}}%
^{n}\right)  \longrightarrow\mathcal{S}_{\ast}^{\prime}\left(  \mathbb{R}%
^{n}\right)  \,, 
\]
that extend the Fourier transform of tempered distributions, namely,
\[
\Pi_{\mathcal{W}^{\prime},\mathcal{S}^{\prime}}\mathcal{F}_{\ast}%
=\mathcal{F}\Pi_{\mathcal{S}_{\ast}^{\prime},\mathcal{S}^{\prime}%
}\,,\ \ \ \ \ \ \Pi_{\mathcal{S}_{\ast}^{\prime},\mathcal{S}^{\prime}%
}\mathcal{F}^{\ast}=\mathcal{F}\Pi_{\mathcal{W}^{\prime},\mathcal{S}^{\prime}%
}\,,
\]
where $\Pi_{\mathcal{W}^{\prime},\mathcal{S}^{\prime}}$ and $\Pi
_{\mathcal{S}_{\ast}^{\prime},\mathcal{S}^{\prime}}$ are the canonical
projections of $\mathcal{S}_{\ast}^{\prime}\left(  \mathbb{R}^{n}\right)  $ or
$\mathcal{W}^{\prime}\left(  \mathbb{R}_{\text{c}}^{n}\right)  $ onto
$\mathcal{S}^{\prime}\left(  \mathbb{R}^{n}\right)  .$ We give the
transformation rules for the Fourier transform of derivatives,
multiplications, and linear changes of variables, as well as the Fourier
inversion formulas. We determine the Fourier transform of several finite part
regularizations and of general thick delta functions.

Since we need to employ many spaces, operators, and distributions, following
advise of one referee, we also included an appendix that lists the many
notations used.

\section{Preliminaries\label{Section:Preliminaries}}

We shall use basic facts about distributions and general functional analysis
as can be found in the textbooks \cite{GreenBook, Horvath, Kanwal, Schwartz,
Treves}. However, in this section we recall several recently introduced or not
so well known ideas that will be needed in our analysis. We will also fix the
notation employed; particularly, we use the Fourier transform%
\begin{equation*}
F\left\{  f\left(  \mathbf{x}\right)  ;\mathbf{u}\right\}  =\int
_{\mathbb{R}^{n}}f\left(  \mathbf{x}\right)  e^{i\mathbf{x\cdot u}%
}\,\mathrm{d}\mathbf{x}\,. 
\end{equation*}
We will write
\begin{equation}
c_{m,n}=\frac{2\Gamma\left(  m+1/2\right)  \pi^{\left(  n-1\right)  /2}%
}{\Gamma\left(  m+n/2\right)  }=\int_{\mathbb{S}}\omega_{j}^{2m}%
\,\mathrm{d}\sigma\left(  \mathbf{\omega}\right)  \,,\ \ \ C=c_{0,n}\,.
\label{ce}
\end{equation}
Notice that $c_{0,n}=C=2\pi^{n/2}/\Gamma\left(  n/2\right)  ,$ is the surface
area of the unit sphere $\mathbb{S}$ of $\mathbb{R}^{n},$ denoted as $C_{n-1}$
in \cite{YE2}.

\subsection{Spaces of thick test functions and spaces of thick
distributions\label{Subsection:Spaces of thick test functions and spaces of thick distributions}%
}

The construction of the space of thick test functions $\mathcal{D}%
_{\ast,\mathbf{a}}\left(  \mathbb{R}^{n}\right)  $ and its dual,
$\mathcal{D}_{\ast,\mathbf{a}}^{\prime}\left(  \mathbb{R}^{n}\right)  ,$ the
space of thick distributions is as follows \cite{YE2}. Let $\mathbf{a}$ be a
fixed point of $\mathbb{R}^{n}.$ Let $\mathcal{D}_{\ast,\mathbf{a}}\left(
\mathbb{R}^{n}\right)  $ denote the vector space of all smooth functions
$\phi$ defined in $\mathbb{R}^{n}\setminus\left\{  \mathbf{a}\right\}  ,$ with
support of the form $K\setminus\left\{  \mathbf{a}\right\}  ,$ where $K$ is
compact in $\mathbb{R}^{n},$ that admit a strong asymptotic expansion of the
form
\begin{equation}
\phi\left(  \mathbf{a}+\mathbf{x}\right)  =\phi\left(  \mathbf{a}%
+r\mathbf{w}\right)  \sim\underset{j=m}{\overset{\infty}{\sum}}a_{j}\left(
\mathbf{w}\right)  r^{j},\ \ \ \text{as }\mathbf{x}\rightarrow\mathbf{0}\,,
\label{Def.1}%
\end{equation}
where $m\in\mathbb{Z}.$ We denote $\mathcal{D}_{\ast,\mathbf{0}}\left(
\mathbb{R}^{n}\right)  $ as $\mathcal{D}_{\ast}\left(  \mathbb{R}^{n}\right)
.$ The space $\mathcal{D}_{\ast,\mathbf{a}}\left(  \mathbb{R}^{n}\right)
$\ has a natural topology that makes it a complete locally convex topological
vector space \cite{YE2}.

\begin{definition}
\label{Definition A}The space of distributions on $\mathbb{R}^{n}$ with a
thick point at $\mathbf{x=a}$ is the dual space of $\mathcal{D}_{\ast
,\mathbf{a}}\left(  \mathbb{R}^{n}\right)  .$ We denote it by $\mathcal{D}%
_{\ast,\mathbf{a}}^{^{\prime}}\left(  \mathbb{R}^{n}\right)  ,$ or just as
$\mathcal{D}_{\ast}^{^{\prime}}\left(  \mathbb{R}^{n}\right)  $ when
$\mathbf{a=0}.$
\end{definition}

In general, we shall denote by $\Pi$ canonical projections, say from $E$ to
$F,$ if they exist but as $\Pi_{E,F}$ when we would like to emphasize the
spaces. In particular we will need the projection operator $\Pi=\Pi
_{\mathcal{D}_{\ast,\mathbf{a}}^{\prime}\left(  \mathbb{R}^{n}\right)
,\mathcal{D}^{\prime}\left(  \mathbb{R}^{n}\right)  }:\mathcal{D}%
_{\ast,\mathbf{a}}^{\prime}\left(  \mathbb{R}^{n}\right)  \rightarrow
\mathcal{D}^{\prime}\left(  \mathbb{R}^{n}\right)  \,,$ dual of the inclusion
$i:\mathcal{D}\left(  \mathbb{R}^{n}\right)  \rightarrow\mathcal{D}%
_{\ast,\mathbf{a}}\left(  \mathbb{R}^{n}\right)  .$ Observe that
$\mathcal{D}\left(  \mathbb{R}^{n}\right)  ,$\ the space of standard test
functions, is a closed subspace of $\mathcal{D}_{\ast,\mathbf{a}}\left(
\mathbb{R}^{n}\right)  .$

Typical elements of $\mathcal{D}_{\ast,\mathbf{a}}^{\prime}\left(
\mathbb{R}^{n}\right)  $ are the finite part regularizations considered in
Definition \ref{Definition Finite Part} and the thick delta functions of order
$q,$ $g\left(  \mathbf{w}\right)  \delta_{\ast}^{\left[  q\right]  }\left(
\mathbf{x-a}\right)  $ for $g\in\mathcal{D}^{\prime}\left(  \mathbb{S}\right)
$ given as
\begin{equation}
\left\langle g\left(  \mathbf{w}\right)  \delta_{\ast}^{\left[  q\right]
}\left(  \mathbf{x-a}\right)  ,\phi\right\rangle =\frac{1}{C}\left\langle
g\left(  \mathbf{w}\right)  ,a_{q}\left(  \mathbf{w}\right)  \right\rangle \,,
\label{thick}%
\end{equation}
if $\phi\in\mathcal{D}_{\ast,\mathbf{a}}\left(  \mathbb{R}^{n}\right)  $ has
the development (\ref{Def.1}). When $g=1$ they are called plain thick delta functions.

We refer to \cite{YE2} for the definition of the basic operations on thick
distributions, like derivatives, changes of variables, and multiplication by
smooth functions. In general ordinary derivatives are denoted as $\nabla_{i},$
distributional derivatives are denoted as $\overline{\nabla}_{i},$ following
\cite{Farassat}, while thick distributional derivatives are denoted as
$\nabla_{i}^{\ast}.$

\subsubsection{Other spaces of thick
distributions\label{Subsection: Other spaces of thick distribution}}

Let $\mathcal{A}\left(  \mathbb{R}^{n}\right)  $ be a space of test functions
in $\mathbb{R}^{n}$ and let $\mathcal{A}^{\prime}\left(  \mathbb{R}%
^{n}\right)  $ be the corresponding space of distributions\footnote{In the
sense of Zemanian \cite{zem}; we assume that $\mathcal{D}\left(
\mathbb{R}^{n}\right)  \subset\mathcal{A}\left(  \mathbb{R}^{n}\right)
\subset\mathcal{E}\left(  \mathbb{R}^{n}\right)  $ densely and continuously
and that differentiation is a continuous map of $\mathcal{A}\left(
\mathbb{R}^{n}\right)  .$}. Our aim in this section is to construct the spaces
of thick test functions and distributions, $\mathcal{A}_{\ast,\mathbf{a}%
}\left(  \mathbb{R}^{n}\right)  $ and $\mathcal{A}_{\ast,\mathbf{a}}^{\prime
}\left(  \mathbb{R}^{n}\right)  .$ Our construction will apply in multiple
cases. For instance, $\mathcal{A}\left(  \mathbb{R}^{n}\right)  $ can be
$\mathcal{E}\left(  \mathbb{R}^{n}\right)  ,$ the space of all smooth
functions and thus $\mathcal{A}^{\prime}\left(  \mathbb{R}^{n}\right)  $
becomes $\mathcal{E}^{\prime}\left(  \mathbb{R}^{n}\right)  ,$ the space of
distributions with compact support; or $\mathcal{A}\left(  \mathbb{R}%
^{n}\right)  $ can be $\mathcal{S}\left(  \mathbb{R}^{n}\right)  ,$ so that
$\mathcal{A}^{\prime}\left(  \mathbb{R}^{n}\right)  $ becomes the space of
tempered distributions $\mathcal{S}^{\prime}\left(  \mathbb{R}^{n}\right)  .$
The case of $\mathcal{K}\left(  \mathbb{R}^{n}\right)  $ and $\mathcal{K}%
^{\prime}\left(  \mathbb{R}^{n}\right)  $ played a central role in the
asymptotic analysis of thick distributions \cite{YE5}.

\begin{definition}
\label{Def OS.1}Let $\mathcal{A}\left(  \mathbb{R}^{n}\right)  $ be a space of
test functions in $\mathbb{R}^{n}.$ The space $\mathcal{A}_{\ast,\mathbf{a}%
}\left(  \mathbb{R}^{n}\right)  $ consists of those functions $\phi$ defined
in $\mathbb{R}^{n}\setminus\left\{  \mathbf{a}\right\}  $ that can be written
as $\phi_{1}+\phi_{2},$ where $\phi_{1}\in\mathcal{D}_{\ast,\mathbf{a}}\left(
\mathbb{R}^{n}\right)  $ and where $\phi_{2}\in\mathcal{A}\left(
\mathbb{R}^{n}\right)  .$ The topology of $\mathcal{A}_{\ast,\mathbf{a}%
}\left(  \mathbb{R}^{n}\right)  $ is the finest topology induced by the map
$A:\mathcal{D}_{\ast,\mathbf{a}}\left(  \mathbb{R}^{n}\right)  \times
\mathcal{A}\left(  \mathbb{R}^{n}\right)  \rightarrow\mathcal{A}%
_{\ast,\mathbf{a}}\left(  \mathbb{R}^{n}\right)  ,$ $A\left(  \phi_{1}%
,\phi_{2}\right)  =\phi_{1}+\phi_{2}.$ The space of thick distributions
$\mathcal{A}_{\ast,\mathbf{a}}^{\prime}\left(  \mathbb{R}^{n}\right)  $ is the
corresponding dual space.
\end{definition}

The topology of $\mathcal{A}_{\ast,\mathbf{a}}\left(  \mathbb{R}^{n}\right)  $
can actually be described in several ways. Suppose for instance that $\rho
\in\mathcal{D}\left(  \mathbb{R}^{n}\right)  $ is a test function that
satisfies that $\rho\left(  \mathbf{x}\right)  =1$ in a neighborhood of
$\mathbf{x}=\mathbf{a}.$ If $\left\Vert \ \ \ \right\Vert _{1}$ is a
continuous seminorm of $\mathcal{D}_{\ast,\mathbf{a}}\left(  \mathbb{R}%
^{n}\right)  $ while $\left\Vert \ \ \ \right\Vert _{2}$ is a continuous
seminorm of $\mathcal{A}\left(  \mathbb{R}^{n}\right)  ,$ then $\left\Vert
\phi\right\Vert =\max\left\{  \left\Vert \rho\phi\right\Vert _{1},\left\Vert
\left(  1-\rho\right)  \phi\right\Vert _{2}\right\}  $ is a continuous
seminorm of $\mathcal{A}_{\ast,\mathbf{a}}\left(  \mathbb{R}^{n}\right)  $ and
the collection of seminorms so constructed form a basis for the continuous
seminorms of $\mathcal{A}_{\ast,\mathbf{a}}\left(  \mathbb{R}^{n}\right)  .$
The elements of $\mathcal{A}_{\ast,\mathbf{a}}\left(  \mathbb{R}^{n}\right)  $
can be described as those smooth functions defined in $\mathbb{R}^{n}%
\setminus\left\{  \mathbf{a}\right\}  $ that show the behavior of thick test
functions near $\mathbf{x}=\mathbf{a}$ while at infinity show the behavior of
the elements of $\mathcal{A}\left(  \mathbb{R}^{n}\right)  .$ Similar
considerations apply to the dual spaces.

Naturally one may consider spaces of thick test functions and thick
distributions on smooth manifolds. In particular, considering the one point
compactification $\mathbb{R}_{\text{c}}^{n}=\mathbb{R}^{n}\cup\left\{
\mathbf{\infty}\right\}  ,$ that can be identified with a sphere in dimension
$n+1,$ we obtain $\mathcal{D}_{\ast,\mathbf{\infty}}\left(  \mathbb{R}%
_{\text{c}}^{n}\right)  ,$\ the space of smooth functions in $\mathbb{R}^{n}$
with a thick point at $\mathbf{\infty},$ namely, smooth functions $\phi$ such
that $\psi\left(  \mathbf{x}\right)  =\phi\left(  \mathbf{x/}\left\vert
\mathbf{x}\right\vert ^{2}\right)  $ has a thick point at the origin. We can
also consider another simple modification of thick test functions, namely, by
considering functions whose expansion at the thick point is given not in terms
of the asymptotic sequence $\left\{  r^{j}\right\}  $ but in terms of another
asymptotic sequence. The topology of such spaces can be constructed in a
completely analogous fashion. In this article we will need to consider test
functions with expansions in terms of the sequence $\left\{  r^{j}\ln
r,r^{j}\right\}  ,$ the space $\mathcal{W}_{\mathrm{pre}}\left(
\mathbb{R}^{n}\right)  $ of the Definition \ref{Definition b 1}.

\subsection{Finite parts\label{Subsection:Finite parts}}

Let us now recall the notion of the finite part of a limit \cite[Section
2.4]{GreenBook}. Let $X$ be a topological space, and let $x_{0}\in X.$ Suppose
$\mathfrak{F},$ the basic functions, is a family of strictly positive
functions defined for $x\in V\setminus\left\{  x_{0}\right\}  ,$ where $V$ is
a neighborhood of $x_{0},$ such that all of them tend to infinity at $x_{0}$
and such that, given two different elements $f_{1},f_{2}\in\mathfrak{F}$, then
$\lim_{x\rightarrow x_{0}}f_{1}\left(  x\right)  /f_{2}\left(  x\right)  $ is
either $0$ or $\infty.$

\begin{definition}
\label{Def FP}Let $G\left(  x\right)  $ be a function defined for $x\in
V\setminus\left\{  x_{0}\right\}  $ with $\lim_{x\rightarrow x_{0}}G\left(
x\right)  =\infty.$ The finite part of the limit of $G\left(  x\right)  $ as
$x\rightarrow x_{0}$ with respect to $\mathfrak{F}$ exists and equals $A$ if
we can write\footnote{Such a decomposition, if it exists, is \emph{unique}
since any finite number of elements of $\mathfrak{F}$ has to be linearly
independent.} $G\left(  x\right)  =G_{1}\left(  x\right)  +G_{2}\left(
x\right)  ,$ where $G_{1},$ the \emph{infinite part,} is a linear combination
of the basic functions and where $G_{2},$\ the \emph{finite part,} has the
property that the limit $A=\lim_{x\rightarrow x_{0}}G_{2}\left(  x\right)  $
exists. We then employ the notation $\mathrm{F.p.}_{\mathfrak{F}}%
\lim_{x\rightarrow x_{0}}G\left(  \varepsilon\right)  =A\,.$ The Hadamard
finite part limit corresponds to the case when $x_{0}=0$ and $\mathfrak{F}$ is
the family of functions $x^{-\alpha}\left\vert \ln x\right\vert ^{\beta},$
where $\alpha>0$ and $\beta\geq0$ or where $\alpha=0$ and $\beta>0,$ or when
$x_{0}=\infty$ and $\mathfrak{F}$ is the family of functions $x^{\alpha
}\left\vert \ln x\right\vert ^{\beta},$ where $\alpha>0$ and $\beta\geq0$ or
where $\alpha=0$ and $\beta>0.$ We then use the simpler notations
$\mathrm{F.p.}\lim_{x\rightarrow0^{+}}G\left(  x\right)  $ or $\mathrm{F.p.}%
\lim_{x\rightarrow\infty}G\left(  x\right)  .$
\end{definition}

Consider now a function $f$ defined in $\mathbb{R}^{n}$ that may or may not be
integrable over the whole space but which is integrable in the region
$\left\vert \mathbf{x}\right\vert >\varepsilon$ for any $\varepsilon>0.$ Then
the \emph{radial} finite part integral is defined as%
\begin{equation*}
\mathrm{F.p.}\int_{\mathbb{R}^{n}}f\left(  \mathbf{x}\right)  \,\mathrm{d}%
\mathbf{x}=\mathrm{F.p.}\lim_{\varepsilon\rightarrow0^{+}}\int_{\left\vert
\mathbf{x}\right\vert >\varepsilon}f\left(  \mathbf{x}\right)  \,\mathrm{d}%
\mathbf{x}\,, 
\end{equation*}
if the finite part limit exists. The notion of finite part integrals and its
name were introduced by Hadamard \cite{Hadamard}, who used them in his study of
fundamental solutions of partial differential equations.

\begin{definition}
\label{Definition Finite Part}If $g$ is a locally integrable function in
$\mathbb{R}^{n}\setminus\left\{  \mathbf{0}\right\}  $ such that the radial
finite part integral of $g\phi$ exists for each $\phi$ belonging to a space of
thick test functions $\mathcal{A}_{\ast}\left(  \mathbb{R}^{n}\right)  ,$ then
we can define a thick\footnote{The same notation is employed for standard
distributions.} distribution $\mathcal{P}f\left\{  g\left(  \mathbf{z}\right)
;\mathbf{x}\right\}  =\mathcal{P}f\left(  g\right)  \in\mathcal{A}_{\ast
}^{\prime}\left(  \mathbb{R}^{n}\right)  $ as%
\begin{equation*}
\left\langle \mathcal{P}f\left\{  g\left(  \mathbf{z}\right)  ;\mathbf{x}%
\right\}  ,\phi\left(  \mathbf{x}\right)  \right\rangle =\left\langle
\mathcal{P}f\left(  g\right)  ,\phi\right\rangle =\mathrm{F.p.}\int
_{\mathbb{R}^{n}}g\left(  \mathbf{x}\right)  \phi\left(  \mathbf{x}\right)
\,\mathrm{d}\mathbf{x}\,. 
\end{equation*}

\end{definition}

The notation $\mathcal{P}f\left(  f\left(  \mathbf{x}\right)  \right)  $ was
introduced by Schwartz \cite[Chp. 2, \S 2]{Schwartz}, who called it a
pseudofunction, a term that is still in use.

A particularly important case of finite part limits is the finite part of a
meromorphic $f$\ function at a pole $\omega,$ which is exactly the value of
the regular part of $f,$ say $g,$ at the pole: $\mathrm{F.p.}\lim
_{\lambda\rightarrow\omega}f\left(  \lambda\right)  =g\left(  \omega\right)
.$

\begin{example}
\label{example 1}Let $x_{0}>0$ and let $\varphi$ be a continuous function in
$[x_{0},\infty),$ that satisfies the asymptotic relation $\varphi\left(
x\right)  =Ax^{\beta}+Bx^{\beta}\ln x+o\left(  x^{-\infty}\right)  $ as
$x\rightarrow\infty.$ The finite part integral $F\left(  \lambda\right)
=\mathrm{F.p.}\int_{x_{0}}^{\infty}x^{\lambda}\varphi\left(  x\right)
\,\mathrm{d}x$ exists for all $\lambda\in\mathbb{C},$ and $F$ will be a
meromorphic function, with a double pole at $\lambda=-\beta-1,$ with singular
part $B\left(  \lambda+\beta+1\right)  ^{-2}-A\left(  \lambda+\beta+1\right)
^{-1} ,$ and the finite part is
\begin{equation}
\mathrm{F.p.}\lim_{\lambda\rightarrow-\beta-1}F\left(  \lambda\right)
=\mathrm{F.p.}\int_{x_{0}}^{\infty}x^{-\beta-1}\varphi\left(  x\right)
\,\mathrm{d}x\,. \label{FP 4}%
\end{equation}
Notice that we have two very different finite part limits in (\ref{FP 4}) and
in this case they give the same result; in fact, that is usually true with
radial finite part integrals \cite{YE5} but not otherwise \cite{Farassat, YE}.
\end{example}

\section{Some Fourier transforms\label{Section: Some Fourier Transforms}}

We need the Fourier transform of several distributions in $\mathbb{R}^{n}%
$\ for later use, especially transforms of the type $\mathcal{F}\left\{
\mathcal{P}f\left(  r^{-N}\right)  a\left(  \mathbf{w}\right)  ;\mathbf{u}%
\right\}  $ where $\mathbf{x}=r\mathbf{w}$ are polar coordinates, and where
$a$ is a smooth function defined on the unit sphere $\mathbb{S}.$ The formulas
for such transforms are available \cite{Samko} but we preferred to present a
self consistent approach to their derivation since these ideas will be useful
when considering the Fourier transform of thick distributions.

We start with the case when $a=1.$ In this case it is well known that%
\begin{equation}
\mathcal{F}\left\{  r^{\lambda};\mathbf{u}\right\}  =\frac{\pi^{n/2}%
2^{\lambda+n}\Gamma\left(  \frac{\lambda+n}{2}\right)  s^{-\lambda-n}}%
{\Gamma\left(  -\frac{\lambda}{2}\right)  \,}\,, \label{2}%
\end{equation}
whenever $\lambda\neq-n,-n-2,-n-4,\ldots$ \cite{GreenBook, Horvath, Kanwal,
Schwartz}. Here $\mathbf{u}=s\mathbf{v}$ are polar coordinates. Observe that
$\mathcal{F}\left\{  r^{\lambda};\mathbf{u}\right\}  $ is in fact analytic at
$\lambda=0,2,4,\ldots,$ so that the right side can be computed as a limit
--employing (\ref{4}) --, namely,%
\begin{equation*}
\mathcal{F}\left\{  r^{2q};\mathbf{u}\right\}  =\lim_{\lambda\rightarrow
2q}\frac{\pi^{n/2}2^{\lambda+n}\Gamma\left(  \frac{\lambda+n}{2}\right)
s^{-\lambda-n}}{\Gamma\left(  -\frac{\lambda}{2}\right)  }=\left(
2\pi\right)  ^{n}\left(  -1\right)  ^{q}\nabla^{2q}\delta\left(
\mathbf{u}\right)  \,. 
\end{equation*}
This is of course the result we would obtain if we use that $\mathcal{F}%
\left\{  1;\mathbf{u}\right\}  =\left(  2\pi\right)  ^{n}\delta\left(
\mathbf{u}\right)  $ and that $\mathcal{F}\left\{  r^{2q}f\left(
\mathbf{x}\right)  ;\mathbf{u}\right\}  =\left(  -1\right)  ^{q}\nabla
^{2q}\mathcal{F}\left\{  f\left(  \mathbf{x}\right)  ;\mathbf{u}\right\}  .$
Next, let us now find $\mathcal{F}\left\{  \mathcal{P}f\left(  r^{-n-2m}%
\right)  ;\mathbf{u}\right\}  $ for $m=0,1,2,\ldots.$ We have%
\begin{equation}
r^{\lambda}=\frac{c_{m,n}\nabla^{2m}\delta\left(  \mathbf{x}\right)  }{\left(
2m\right)  !\left(  \lambda+2m+n\right)  }+\mathcal{P}f\left(  \frac
{1}{r^{n+2m}}\right)  +O\left(  \lambda+2m+n\right)  \,, \label{4}%
\end{equation}
as $\lambda\rightarrow-\left(  2m+n\right)  ,$ so that we obtain the finite
part limit $\mathrm{F.p.}\lim_{\lambda\rightarrow-\left(  2m+n\right)
}r^{\lambda}=\mathcal{P}f\left(  r^{-n-2m}\right)  .$ Therefore $\mathcal{F}%
\left\{  \mathcal{P}f\left(  r^{-n-2m}\right)  ;\mathbf{u}\right\}  $ equals%
\begin{equation*}
\mathrm{F.p.}\lim_{\lambda\rightarrow-\left(  2m+n\right)  }\mathcal{F}%
\left\{  r^{\lambda};\mathbf{u}\right\}  =\mathrm{F.p.}\lim_{\lambda
\rightarrow-\left(  2m+n\right)  }\frac{\pi^{n/2}2^{\lambda+n}\Gamma\left(
\frac{\lambda+n}{2}\right)  s^{-\lambda-n}}{\Gamma\left(  -\frac{\lambda}%
{2}\right)  \,}\,. 
\end{equation*}
This finite part limit is actually already computed in the first edition of
\cite{Schwartz}. It follows easily from the following lemma \cite{Lebedev,
YE5}.

\begin{lemma}
\label{Lemma 2}Let $k\in\mathbb{N}.$ We have that as $\lambda\rightarrow-k,$%
\begin{equation*}
\Gamma\left(  \lambda\right)  =\frac{\left(  -1\right)  ^{k}}{k!\left(
\lambda+k\right)  }+\frac{\left(  -1\right)  ^{k}\psi\left(  k+1\right)  }%
{k!}+O\left(  \lambda+k\right)  \,, 
\end{equation*}
where $\psi\left(  \lambda\right)  =\Gamma^{\prime}\left(  \lambda\right)
/\Gamma\left(  \lambda\right)  $ is the digamma function so that $\psi\left(
k+1\right)  =\sum_{j=1}^{k}1/j\,-\gamma,$ $\gamma$ being Euler's constant. If
$k=0,1,2,\ldots,$ and $f$ is analytic in a neighborhood of $-k,$%
\begin{equation*}
\mathrm{F.p.}\lim_{\lambda\rightarrow-k}\Gamma\left(  \lambda\right)  f\left(
\lambda\right)  =\frac{\left(  -1\right)  ^{k}\psi\left(  k+1\right)  }%
{k!}f\left(  -k\right)  +\frac{\left(  -1\right)  ^{k}}{k!}f^{\prime}\left(
-k\right)  \,. 
\end{equation*}

\end{lemma}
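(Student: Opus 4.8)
The statement to prove is Lemma \ref{Lemma 2}, which has two parts: (i) the Laurent expansion of $\Gamma(\lambda)$ at the negative integers $-k$, including the evaluation of the constant term in terms of the digamma function, and (ii) the finite part of $\Gamma(\lambda)f(\lambda)$ at $-k$ when $f$ is analytic there.

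\medskip

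For part (i), the plan is to proceed inductively on $k$. The base case $k=0$ follows from the well-known expansion $\Gamma(\lambda)=\lambda^{-1}\Gamma(\lambda+1)$ together with the Taylor expansion $\Gamma(\lambda+1)=\Gamma(1)+\Gamma'(1)\lambda+O(\lambda^{2})=1-\gamma\lambda+O(\lambda^{2})$, using $\Gamma(1)=1$ and $\Gamma'(1)=\psi(1)=-\gamma$; thus $\Gamma(\lambda)=\lambda^{-1}-\gamma+O(\lambda)$, which is the asserted formula since $\psi(1)=-\gamma$ and $0!=1$. For the inductive step, one uses the functional equation in the form $\Gamma(\lambda)=\Gamma(\lambda+1)/\lambda$ repeatedly, or more directly $\Gamma(\lambda)=\Gamma(\lambda+k+1)\big/\bigl(\lambda(\lambda+1)\cdots(\lambda+k)\bigr)$. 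Setting $\mu=\lambda+k\to 0$, the numerator $\Gamma(\mu+1)=1-\gamma\mu+O(\mu^{2})$ is analytic and nonzero at $\mu=0$, while the denominator is $\mu\cdot\prod_{j=1}^{k}(\mu+k-j+1+\dots)$; collecting the product of the nonvanishing factors gives $(-1)^{k}k!$ at leading order, and one needs the next-order term of that product. Writing $\prod_{j=0,\,j\neq k}^{k}(\lambda+j)=(-1)^{k}k!\bigl(1+\mu\sum_{j=0,\,j\neq k}^{k}(j-k)^{-1}+O(\mu^{2})\bigr)=(-1)^{k}k!\bigl(1-\mu\sum_{i=1}^{k}i^{-1}+O(\mu^{2})\bigr)$, and combining with the numerator expansion, the $\mu^{0}$-coefficient of $\Gamma(\lambda)$ comes out to $\dfrac{(-1)^{k}}{k!}\bigl(-\gamma+\sum_{i=1}^{k}i^{-1}\bigr)=\dfrac{(-1)^{k}\psi(k+1)}{k!}$, using the standard identity $\psi(k+1)=-\gamma+\sum_{i=1}^{k}1/i$. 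This is the bookkeeping heart of the argument, but it is entirely elementary.

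\medskip

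For part (ii), with the expansion from (i) in hand, write $\Gamma(\lambda)=\dfrac{(-1)^{k}}{k!(\lambda+k)}+\dfrac{(-1)^{k}\psi(k+1)}{k!}+O(\lambda+k)$ and $f(\lambda)=f(-k)+f'(-k)(\lambda+k)+O((\lambda+k)^{2})$. Multiplying, the product is
\begin{equation*}
\Gamma(\lambda)f(\lambda)=\frac{(-1)^{k}f(-k)}{k!(\lambda+k)}+\frac{(-1)^{k}}{k!}\bigl(\psi(k+1)f(-k)+f'(-k)\bigr)+O(\lambda+k)\,.
\end{equation*}
The only basic function in $\mathfrak{F}$ appearing here (in the sense of Definition \ref{Def FP}, with $x_{0}=0$ and $\mathfrak{F}=\{x^{-\alpha}|\ln x|^{\beta}\}$, here just the simple pole $(\lambda+k)^{-1}$) is the first term, which is the infinite part $G_{1}$; the remaining two terms form $G_{2}$, whose limit as $\lambda\to -k$ is $\dfrac{(-1)^{k}}{k!}\bigl(\psi(k+1)f(-k)+f'(-k)\bigr)$. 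By the definition of the finite part limit this is precisely $\mathrm{F.p.}\lim_{\lambda\to-k}\Gamma(\lambda)f(\lambda)$, which is the claimed formula.

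\medskip

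The main obstacle is not conceptual but organizational: correctly tracking the first two Laurent coefficients of $\Gamma$ through the repeated application of the functional equation, and in particular recognizing that the second-order correction to the product $\prod(\lambda+j)$ collapses to the harmonic sum $\sum_{i=1}^{k}1/i$ so that the digamma identity $\psi(k+1)=-\gamma+\sum_{i=1}^{k}1/i$ can be invoked. Once part (i) is established, part (ii) is an immediate two-line computation.
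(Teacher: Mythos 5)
Your proof is correct. The paper itself offers no proof of this lemma — it is stated as a known fact and attributed to the references \cite{Lebedev, YE5} — so there is no authorial argument to compare against; but your derivation is the standard one and matches what appears in Lebedev.

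A couple of brief remarks on the write-up. For part (i) you announce an induction on $k$ but then immediately give the direct argument via
\[
\Gamma(\lambda)=\frac{\Gamma(\lambda+k+1)}{\lambda(\lambda+1)\cdots(\lambda+k)}\,,
\]
which settles every $k$ at once with no induction needed; dropping the inductive framing would make the proof cleaner. The bookkeeping is all correct: with $\mu=\lambda+k$, the numerator expands as $\Gamma(\mu+1)=1-\gamma\mu+O(\mu^{2})$, the nonvanishing factors in the denominator give
\[
\prod_{j=0}^{k-1}(\lambda+j)=(-1)^{k}k!\Bigl(1-\mu\sum_{i=1}^{k}\tfrac{1}{i}+O(\mu^{2})\Bigr)\,,
\]
and dividing yields
\[
\Gamma(\lambda)=\frac{(-1)^{k}}{k!\,\mu}\Bigl(1+\mu\bigl(\textstyle\sum_{i=1}^{k}\tfrac{1}{i}-\gamma\bigr)+O(\mu^{2})\Bigr)
=\frac{(-1)^{k}}{k!(\lambda+k)}+\frac{(-1)^{k}\psi(k+1)}{k!}+O(\lambda+k)\,.
\]
For part (ii), you correctly identify the simple pole term as the infinite part $G_{1}$ in the sense of Definition \ref{Def FP}, so the finite part is the constant term of the Laurent product, precisely as the paper notes for meromorphic functions. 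Your conclusion agrees term-by-term with the statement.
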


The ensuing result is therefore obtained.

\begin{lemma}
\label{lemma 2.5}If $m=0,1,2,\ldots$ then%
\[
  \ \mathcal{F}\left\{  \mathcal{P}f\left(  \frac{1}{r^{n+2m}}\right)
;\mathbf{u}\right\} =
  \frac{\left(  -1\right)  ^{m}\pi^{n/2}}{m!\Gamma\left(  \frac{n}%
{2}+m\right)  }\left(  \frac{s}{2}\right)  ^{2m}\left\{  \psi\left(
m+1\right)  +\psi\left(  \frac{n}{2}+m\right)  -2\ln\left(  \frac{s}%
{2}\right)  \right\} 
\]

\end{lemma}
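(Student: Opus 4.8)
The plan is to compute $\mathcal{F}\{\mathcal{P}f(r^{-n-2m});\mathbf{u}\}$ by taking the Hadamard finite part limit $\mathrm{F.p.}\lim_{\lambda\to-(2m+n)}\mathcal{F}\{r^{\lambda};\mathbf{u}\}$ as indicated in the paragraph preceding the statement, and applying Lemma \ref{Lemma 2} to the explicit formula \eqref{2}. First I would write $\mathcal{F}\{r^{\lambda};\mathbf{u}\}$ from \eqref{2} as
\[
\mathcal{F}\{r^{\lambda};\mathbf{u}\}=\frac{\pi^{n/2}2^{\lambda+n}\Gamma\!\left(\tfrac{\lambda+n}{2}\right)s^{-\lambda-n}}{\Gamma\!\left(-\tfrac{\lambda}{2}\right)}
\]
and observe that as $\lambda\to-(2m+n)$ the factor $1/\Gamma(-\lambda/2)$ has a simple zero at the point $-\lambda/2=m+n/2>0$ only if that argument is a non-positive integer; it is not, so that factor is analytic and nonzero there. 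The source of the pole is instead $\Gamma\!\left(\tfrac{\lambda+n}{2}\right)$: as $\lambda\to-(2m+n)$ the argument $\tfrac{\lambda+n}{2}\to-m$, a non-positive integer, so this gamma factor contributes the pole, and the finite part limit is governed by Lemma \ref{Lemma 2} with $k=m$ applied to the variable $\mu=\tfrac{\lambda+n}{2}$.

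Accordingly, I would set $\mu=\tfrac{\lambda+n}{2}$, so $\lambda=2\mu-n$, $\lambda+n=2\mu$, $-\lambda/2=\tfrac{n}{2}-\mu$, $2^{\lambda+n}=2^{2\mu}=4^{\mu}$, and $s^{-\lambda-n}=s^{-2\mu}$. Then
\[
\mathcal{F}\{r^{\lambda};\mathbf{u}\}=\pi^{n/2}\,\Gamma(\mu)\,g(\mu),\qquad g(\mu)=\frac{4^{\mu}s^{-2\mu}}{\Gamma\!\left(\tfrac{n}{2}-\mu\right)},
\]
and $g$ is analytic in a neighborhood of $\mu=-m$. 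Lemma \ref{Lemma 2} then gives
\[
\mathrm{F.p.}\lim_{\mu\to-m}\Gamma(\mu)g(\mu)=\frac{(-1)^m\psi(m+1)}{m!}g(-m)+\frac{(-1)^m}{m!}g'(-m),
\]
and it remains to evaluate $g(-m)$ and $g'(-m)$. One computes $g(-m)=(s/2)^{2m}/\Gamma\!\left(\tfrac{n}{2}+m\right)$ directly. For the derivative, logarithmic differentiation gives $g'(\mu)/g(\mu)=\ln 4-2\ln s+\psi\!\left(\tfrac{n}{2}-\mu\right)=-2\ln(s/2)+\psi\!\left(\tfrac{n}{2}-\mu\right)$ — here the sign on the digamma term comes from $\tfrac{d}{d\mu}\Gamma\!\left(\tfrac{n}{2}-\mu\right)^{-1}$ — so $g'(-m)=g(-m)\left[-2\ln(s/2)+\psi\!\left(\tfrac{n}{2}+m\right)\right]$. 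Substituting and factoring out $g(-m)$ yields exactly the claimed expression
\[
\frac{(-1)^m\pi^{n/2}}{m!\,\Gamma\!\left(\tfrac{n}{2}+m\right)}\left(\frac{s}{2}\right)^{2m}\left\{\psi(m+1)+\psi\!\left(\tfrac{n}{2}+m\right)-2\ln\!\left(\frac{s}{2}\right)\right\}.
\]

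The one point that needs genuine care rather than routine bookkeeping is the justification that the Fourier transform commutes with the finite part limit here: one must know that $\lambda\mapsto\mathcal{F}\{r^{\lambda};\mathbf{u}\}$, viewed as a $\mathcal{D}_{\ast}'$-valued (or $\mathcal{W}'$-valued) meromorphic function, has its finite part at $\lambda=-(2m+n)$ equal to $\mathcal{F}$ of the finite part of $r^{\lambda}$, which is $\mathcal{P}f(r^{-n-2m})$ by \eqref{4}. This is legitimate because $\mathcal{F}$ is a continuous linear (in fact topological iso on the relevant spaces) map, so it preserves the meromorphic structure and commutes with extracting regular parts; I would state this as the mechanism and refer to \eqref{4} for the identification of the finite part on the spatial side. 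Everything else — the change of variables $\mu=\tfrac{\lambda+n}{2}$, the analyticity check for $1/\Gamma(-\lambda/2)$, and the two evaluations $g(-m)$, $g'(-m)$ — is a direct computation with no hidden difficulty, the only subtlety being to track the sign coming from differentiating the reciprocal gamma factor.
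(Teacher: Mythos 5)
Your proof is correct and carries out exactly the computation the paper indicates: substitute into formula \eqref{2}, identify the pole as coming from $\Gamma\left(\tfrac{\lambda+n}{2}\right)$, change variables to $\mu=\tfrac{\lambda+n}{2}$, and apply Lemma \ref{Lemma 2} with $k=m$. The paper states that the result "follows easily" from Lemma \ref{Lemma 2}; your write-up simply makes the change of variables and the evaluation of $g(-m)$ and $g'(-m)$ explicit, and your remark on passing the finite-part limit through $\mathcal{F}$ is the same justification (via \eqref{4} and continuity of $\mathcal{F}$) the paper relies on.
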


Our next task is to find the Fourier transform of distributions of the form
$\mathcal{P}f\left(  r^{-N}\right)  a\left(  \mathbf{w}\right)  $ when
$a=\mathsf{Y}_{k}$ is a spherical harmonic\footnote{We denote this as
$\mathsf{Y}_{k}\in\mathcal{H}_{k}.$ For more on spherical harmonics, see
\cite{AxlerBourdonRamey, RubinBook}.} of degree $k.$

\begin{lemma}
\label{lemma 3}If $\mathsf{Y}_{k}\in\mathcal{H}_{k}$ and $\lambda
\neq-n-k,-n-k-2,-n-k-4,\ldots$
\begin{equation}
\mathcal{F}\left\{  r^{\lambda}\mathsf{Y}_{k}\left(  \mathbf{w}\right)
;s\mathbf{v}\right\}  =\frac{i^{k}\pi^{n/2}2^{\lambda+n}\Gamma\left(
\frac{k+n+\lambda}{2}\right)  }{\Gamma\left(  \frac{k-\lambda}{2}\right)
}s^{-\left(  \lambda+n\right)  }\mathsf{Y}_{k}\left(  \mathbf{v}\right)  \,.
\label{a7}%
\end{equation}

\end{lemma}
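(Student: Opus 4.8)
The plan is to reduce the statement to the classical Bochner--Hecke identity. Write $r^{\lambda}\mathsf{Y}_{k}(\mathbf{w})=r^{\lambda-k}P(\mathbf{x})$, where $P(\mathbf{x})=r^{k}\mathsf{Y}_{k}(\mathbf{w})$ is the homogeneous harmonic polynomial of degree $k$ whose restriction to $\mathbb{S}$ is $\mathsf{Y}_{k}$, and then subordinate the power $r^{\lambda-k}$ to Gaussians. The one computational fact needed is the Hecke lemma: if $P$ is harmonic and homogeneous of degree $k$ and $h$ is smooth, then $P(\nabla_{\mathbf{x}})\,h\!\left(\left\vert \mathbf{x}\right\vert ^{2}\right)=2^{k}P(\mathbf{x})\,h^{(k)}\!\left(\left\vert \mathbf{x}\right\vert ^{2}\right)$; indeed, in the multivariate chain rule the only term in which all $k$ derivatives fall on the argument contributes $2^{k}P(\mathbf{x})h^{(k)}$, and every other term involves a partial contraction of the coefficient tensor of $P$ --- equivalently a power of $\nabla^{2}P$ --- and therefore vanishes since $P$ is harmonic. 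Applying this with $h(\sigma)=e^{-t\sigma}$, together with $\mathcal{F}\{e^{-t\left\vert \mathbf{x}\right\vert ^{2}};\mathbf{u}\}=(\pi/t)^{n/2}e^{-\left\vert \mathbf{u}\right\vert ^{2}/(4t)}$, the rule that $\mathcal{F}$ turns multiplication by $x_{j}$ into $-i\partial_{u_{j}}$, and the lemma once more with $h(\sigma)=e^{-\sigma/(4t)}$, one obtains for every $t>0$
\begin{equation*}
\mathcal{F}\left\{  P(\mathbf{x})e^{-t\left\vert \mathbf{x}\right\vert ^{2}};\mathbf{u}\right\}  =(-i)^{k}P(\nabla_{\mathbf{u}})\,\mathcal{F}\left\{  e^{-t\left\vert \mathbf{x}\right\vert ^{2}};\mathbf{u}\right\}  =\frac{i^{k}}{(2t)^{k}}\left(  \frac{\pi}{t}\right)  ^{n/2}P(\mathbf{u})\,e^{-\left\vert \mathbf{u}\right\vert ^{2}/(4t)}\,,
\end{equation*}
the sign $i^{k}$ emerging cleanly from $(-i)^{k}(-1)^{k}=i^{k}$.

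Next I would use the subordination identity $r^{\lambda-k}=\Gamma\!\left(\tfrac{k-\lambda}{2}\right)^{-1}\int_{0}^{\infty}t^{(k-\lambda)/2-1}e^{-tr^{2}}\,\mathrm{d}t$, valid pointwise for $\mathrm{Re}\,\lambda<k$. For $-n<\mathrm{Re}\,\lambda<k$ the function $r^{\lambda-k}P(\mathbf{x})=r^{\lambda}\mathsf{Y}_{k}(\mathbf{w})$ is locally integrable and tempered, and Fubini (legitimate because $\int_{0}^{\infty}t^{(k-\mathrm{Re}\,\lambda)/2-1}e^{-tr^{2}}\mathrm{d}t$ is a constant times $r^{\mathrm{Re}\,\lambda}$, which is integrable near the origin against Schwartz functions) gives
\begin{equation*}
\mathcal{F}\left\{  r^{\lambda-k}P(\mathbf{x});\mathbf{u}\right\}  =\frac{1}{\Gamma\!\left(\tfrac{k-\lambda}{2}\right)}\int_{0}^{\infty}t^{(k-\lambda)/2-1}\,\mathcal{F}\left\{  P(\mathbf{x})e^{-t\left\vert \mathbf{x}\right\vert ^{2}};\mathbf{u}\right\}  \mathrm{d}t\,.
\end{equation*}
Substituting the displayed Gaussian transform, the inner integral becomes $\frac{i^{k}\pi^{n/2}}{2^{k}}\int_{0}^{\infty}t^{-(k+n+\lambda)/2-1}e^{-s^{2}/(4t)}\,\mathrm{d}t$, and the change of variables $t\mapsto 1/t$ turns it into a Gamma integral equal to $\frac{i^{k}\pi^{n/2}}{2^{k}}\,2^{k+n+\lambda}\Gamma\!\left(\tfrac{k+n+\lambda}{2}\right)s^{-(k+n+\lambda)}$. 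Collecting constants, using $2^{k+n+\lambda}/2^{k}=2^{n+\lambda}$ and $P(\mathbf{u})s^{-(k+n+\lambda)}=s^{k}\mathsf{Y}_{k}(\mathbf{v})\,s^{-(k+n+\lambda)}=s^{-(\lambda+n)}\mathsf{Y}_{k}(\mathbf{v})$, yields exactly the right-hand side of (\ref{a7}) for $-n<\mathrm{Re}\,\lambda<k$.

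Finally, the formula is extended to all admissible $\lambda$ by analytic continuation. Tested against a fixed element of $\mathcal{S}(\mathbb{R}^{n})$, both sides are meromorphic in $\lambda$: on the left, the homogeneous distribution $r^{\lambda}\mathsf{Y}_{k}(\mathbf{w})$ has simple poles only at $\lambda=-n-k-2j$, $j\ge 0$; on the right, $\Gamma\!\left(\tfrac{k+n+\lambda}{2}\right)$ has its poles at precisely those points, while the poles of the distribution $s^{-(\lambda+n)}\mathsf{Y}_{k}(\mathbf{v})$, located at $\lambda=k+2j$, are cancelled by the zeros of $1/\Gamma\!\left(\tfrac{k-\lambda}{2}\right)$ there. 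Hence both sides are holomorphic off $\{-n-k-2j:j\ge 0\}$ and agree on a strip, so they agree wherever defined, which is the stated range $\lambda\neq -n-k,-n-k-2,\dots$. I expect this last step to be the main point requiring care: identifying the meromorphic structure of the distribution-valued functions $s^{\beta}\mathsf{Y}_{k}(\mathbf{v})$ and checking that poles and zeros of the Gamma factors line up over the whole punctured region; the Hecke lemma and the bookkeeping of constants are routine. (An alternative avoiding Gaussians is to apply $P(\nabla_{\mathbf{x}})$ directly to $r^{\lambda+k}$, using the same lemma with $h(\sigma)=\sigma^{(\lambda+k)/2}$ and then (\ref{2}); this is shorter but replaces the clean $i^{k}$ by a product of Gamma values that must be simplified through the reflection formula, together with a parity argument in $k$ to pass from $(-i)^{k}$ to $i^{k}$.)
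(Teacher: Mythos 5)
Your proof is correct, but it follows a different route from the paper's. The paper first invokes the general Funk--Hecke principle (quoting \cite{Estrada18}) to conclude that, by homogeneity and rotational equivariance, $\mathcal{F}\{r^{\lambda}\mathsf{Y}_{k}(\mathbf{w});s\mathbf{v}\}=C_{k,\lambda}\,s^{-(\lambda+n)}\mathsf{Y}_{k}(\mathbf{v})$ for a constant depending only on $k$ and $\lambda$, so that the whole lemma reduces to pinning down $C_{k,\lambda}$. This is done for a single convenient spherical harmonic in each degree, by induction on $k$: the base case $k=0$ is formula (\ref{2}), and the step from $k$ to $k+1$ uses the particular harmonic $\mathsf{Y}_{k+1}(\mathbf{x})=\mathsf{Y}_{k}(\widetilde{\mathbf{x}})\,x_{n}$ and the interchange rule between multiplication by $x_{n}$ and $-i\,\partial/\partial u_{n}$, so the induction is essentially bookkeeping of Gamma factors. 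Your argument instead proves the full identity directly by the Bochner--Hecke route: the differential Hecke lemma $P(\nabla)\,h(|\mathbf{x}|^{2})=2^{k}P(\mathbf{x})\,h^{(k)}(|\mathbf{x}|^{2})$ gives the Gaussian transform, subordination and Fubini give (\ref{a7}) on the strip $-n<\Re\,\lambda<k$, and meromorphic continuation (after checking that the poles of the homogeneous distribution on each side are accounted for by the Gamma factors) extends it to the stated range. The paper's approach is shorter because it borrows the representation-theoretic fact that the transform must again be a multiple of the same $\mathsf{Y}_{k}$, whereas yours derives that fact as a byproduct; yours is more self-contained but must carry the Fubini justification and the pole-matching analysis that the paper's reduction sidesteps. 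Both proofs verify the same Gamma-factor identity, and your bookkeeping (including the $(-i)^{k}(-1)^{k}=i^{k}$ sign, the $t\mapsto1/t$ Gamma integral, and the cancellation between poles of $s^{-(\lambda+n)}$ and zeros of $1/\Gamma((k-\lambda)/2)$ at $\lambda=k+2j$) checks out.
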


\begin{proof}
Notice that for a general $k,$ the Fourier transform of $r^{\lambda}%
\mathsf{Y}_{k}\left(  \mathbf{w}\right)  ,$ a homogeneous distribution of
degree $\lambda,$ is homogeneous of degree $-\left(  \lambda+n\right)  ,$ so
that the Funk-Hecke formula \cite{Funk, Hecke} as presented in
\cite{Estrada18} yields that%
\begin{equation*}
\mathcal{F}\left\{  r^{\lambda}\mathsf{Y}_{k}\left(  \mathbf{w}\right)
;s\mathbf{v}\right\}  =C_{k,\lambda}s^{-\left(  \lambda+n\right)  }%
\mathsf{Y}_{k}\left(  \mathbf{v}\right)  \,, 
\end{equation*}
for some constants $C_{k,\lambda}$ that depend on $k$ and $\lambda$ but not
otherwise on $\mathsf{Y}_{k}.$ Thus, it suffices to show that for each $k$
(\ref{a7}) holds for just \emph{one} spherical harmonic of degree $k.$ We use
induction on $k.$ If $k=0$ then (\ref{a7}) is exactly (\ref{2}). Let us assume
it true for $k$ and let us prove it for $k+1.$ Indeed, we take $\mathsf{Y}%
_{k+1}\left(  \mathbf{x}\right)  =\mathsf{Y}_{k}\left(  \widetilde{\mathbf{x}%
}\right)  x_{n}$ where $\mathbf{x}=\left(  \widetilde{\mathbf{x}}%
,x_{n}\right)  ,$ so that%
\begin{align*}
\mathcal{F}\left\{  r^{\lambda}\mathsf{Y}_{k+1}\left(  \mathbf{w}\right)
;s\mathbf{v}\right\}  &=-i\frac{\partial}{\partial u_{n}}\left(  \frac{i^{k}%
\pi^{n/2}2^{\lambda-1+n}\Gamma\left(  \frac{k+n+\lambda-1}{2}\right)  }%
{\Gamma\left(  \frac{k-\lambda+1}{2}\right)  \,}s^{1-\lambda-n}\mathsf{Y}%
_{k}\left(  \mathbf{v}\right)  \right)
\\
&
=\frac{-i^{k+1}\pi^{n/2}2^{\lambda-1+n}\Gamma\left(  \frac{k+n+\lambda-1}%
{2}\right)  }{\Gamma\left(  \frac{k-\lambda+1}{2}\right)  }\frac{\partial
}{\partial u_{n}}\left(  \mathsf{Y}_{k}\left(  \mathbf{u}\right)
s^{-\lambda+1-k-n}\right)
\\
&
=\frac{i^{k+1}\pi^{n/2}2^{\lambda+n}\Gamma\left(  \frac{k+1+n+\lambda}%
{2}\right)  }{\Gamma\left(  \frac{k+1-\lambda}{2}\right)  \,}s^{-\lambda
-n}\mathsf{Y}_{k+1}\left(  \mathbf{v}\right)  \,,
\end{align*}
as required.
\end{proof}

Since $\mathcal{F}\left\{  r^{\lambda}\mathsf{Y}_{k}\left(  \mathbf{w}\right)
;s\mathbf{v}\right\}  $ is analytic at $\lambda=k+2q,$ $q=0,1,2,\ldots,$ at
this value of $\lambda$ (\ref{a7}) is the limit of the expression as
$\lambda\rightarrow k+2q.$ In fact, using the product formula
\begin{equation*}
\mathsf{Y}_{k}\left(  \mathbf{u}\right)  \nabla^{2m}\delta\left(
\mathbf{u}\right)  =\frac{\left(  -1\right)  ^{k}2^{k}m!}{\left(  m-k\right)
!}\mathsf{Y}_{k}\left(  \mathbf{\nabla}\right)  \nabla^{2m-2k}\delta\left(
\mathbf{u}\right)  \,,\ \ \ m\geq k\,, %
\end{equation*}
from \cite[Prop. 3.3]{Estrada 18b}, we indeed obtain%
\begin{align}
\mathcal{F}\left\{  r^{k+2q}\mathsf{Y}_{k}\left(  \mathbf{w}\right)
;s\mathbf{v}\right\}   &  =\lim_{\lambda\rightarrow k+2q}\frac{i^{k}\pi
^{n/2}2^{\lambda+n}\Gamma\left(  \frac{k+n+\lambda}{2}\right)  }{\Gamma\left(
\frac{k-\lambda}{2}\right)  \,s^{\lambda+n}}\mathsf{Y}_{k}\left(
\mathbf{v}\right) \label{a.8}\\
&  =\frac{\left(  -i\right)  ^{k}\left(  -1\right)  ^{q}\left(  2\pi\right)
^{n}\left(  k+q\right)  !}{k!q!}\mathsf{Y}_{k}\left(  \mathbf{\nabla}\right)
\nabla^{2q}\delta\left(  \mathbf{u}\right)  \,.\nonumber
\end{align}

If we now use the Lemma \ref{Lemma 2} as before, we obtain the following formula.

\begin{lemma}
\label{Lemma 4}If $\mathsf{Y}_{k}\in\mathcal{H}_{k}$ and $m=0,1,2,\ldots$ then%
\begin{equation*}
\mathcal{F}\left\{  \mathcal{P}f\left(  \frac{1}{r^{n+k+2m}}\right)
\mathsf{Y}_{k}\left(  \mathbf{w}\right)  ;s\mathbf{v}\right\}  = 
\end{equation*}%
\[
\frac{\left(  -1\right)  ^{m}i^{k}\pi^{n/2}}{m!\Gamma\left(  \frac{n}%
{2}+k+m\right)  }\left(  \frac{s}{2}\right)  ^{2m+k}\left\{  \psi\left(
1+m\right)  +\psi\left(  \frac{n}{2}+k+m\right)  -2\ln\left(  \frac{s}%
{2}\right)  \right\}  \mathsf{Y}_{k}\left(  \mathbf{v}\right)  \,.
\]

\end{lemma}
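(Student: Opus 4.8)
The plan is to imitate the passage from (\ref{2}) and Lemma~\ref{Lemma 2} to Lemma~\ref{lemma 2.5}, now starting instead from the formula (\ref{a7}) of Lemma~\ref{lemma 3}.

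First I would record the analogue of (\ref{4}): the $\mathcal{S}^{\prime}(\mathbb{R}^{n})$-valued function $\lambda\mapsto r^{\lambda}\mathsf{Y}_{k}(\mathbf{w})$ is meromorphic, with a simple pole at $\lambda=-(n+k+2m)$ whose residue is a multiple of $\mathsf{Y}_{k}(\mathbf{\nabla})\nabla^{2m}\delta(\mathbf{x})$ and whose finite part is $\mathcal{P}f(r^{-(n+k+2m)})\mathsf{Y}_{k}(\mathbf{w})$; this can be read off from the polar representation $\langle r^{\lambda}\mathsf{Y}_{k}(\mathbf{w}),\phi\rangle=\int_{0}^{\infty}r^{\lambda+n-1}\int_{\mathbb{S}}\mathsf{Y}_{k}(\mathbf{w})\phi(r\mathbf{w})\,\mathrm{d}\sigma(\mathbf{w})\,\mathrm{d}r$, since the inner spherical integral has an expansion in the powers $r^{k},r^{k+2},\dots$, or alternatively from (\ref{a.8}) via Fourier inversion. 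Consequently $\mathrm{F.p.}\lim_{\lambda\to-(n+k+2m)}r^{\lambda}\mathsf{Y}_{k}(\mathbf{w})=\mathcal{P}f(r^{-(n+k+2m)})\mathsf{Y}_{k}(\mathbf{w})$, and since $\mathcal{F}$ is continuous on $\mathcal{S}^{\prime}(\mathbb{R}^{n})$ it commutes with finite part limits of meromorphic families (carrying $R/(\lambda-\lambda_{0})+S+O(\lambda-\lambda_{0})$ to $\mathcal{F}R/(\lambda-\lambda_{0})+\mathcal{F}S+O(\lambda-\lambda_{0})$), so
\[
\mathcal{F}\{\mathcal{P}f(r^{-(n+k+2m)})\mathsf{Y}_{k}(\mathbf{w});s\mathbf{v}\}=\mathrm{F.p.}\lim_{\lambda\to-(n+k+2m)}\mathcal{F}\{r^{\lambda}\mathsf{Y}_{k}(\mathbf{w});s\mathbf{v}\}\,.
\]

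Next I would insert (\ref{a7}) and compute that finite part. Since $\tfrac{k-\lambda}{2}=k+m+\tfrac{n}{2}>0$ at $\lambda=-(n+k+2m)$, the factor $\Gamma(\tfrac{k-\lambda}{2})$ is regular and nonzero there and the pole comes only from $\Gamma(\tfrac{k+n+\lambda}{2})$. Setting $\mu=\tfrac{k+n+\lambda}{2}$, so that $\mu\to-m$, $\lambda+n=2\mu-k$ and $\tfrac{k-\lambda}{2}=k+\tfrac{n}{2}-\mu$, the right-hand side of (\ref{a7}) becomes $\Gamma(\mu)\,g(\mu)$ with
\[
g(\mu)=\frac{i^{k}\pi^{n/2}2^{2\mu-k}s^{k-2\mu}}{\Gamma(k+\tfrac{n}{2}-\mu)}\,\mathsf{Y}_{k}(\mathbf{v})\,,
\]
which is analytic near $\mu=-m$. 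Lemma~\ref{Lemma 2} (with the integer there taken to be $m$) then gives $\mathrm{F.p.}\lim_{\mu\to-m}\Gamma(\mu)g(\mu)=\frac{(-1)^{m}}{m!}(\psi(m+1)g(-m)+g^{\prime}(-m))$. A direct computation yields $g(-m)=\frac{i^{k}\pi^{n/2}}{\Gamma(\tfrac{n}{2}+k+m)}(\tfrac{s}{2})^{2m+k}\mathsf{Y}_{k}(\mathbf{v})$ and, from $\tfrac{\mathrm{d}}{\mathrm{d}\mu}\ln g(\mu)=2\ln 2-2\ln s+\psi(k+\tfrac{n}{2}-\mu)$, that $g^{\prime}(-m)=g(-m)(\psi(\tfrac{n}{2}+k+m)-2\ln(\tfrac{s}{2}))$. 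Substituting these two expressions produces exactly the asserted formula, with sign $(-1)^{m}$, prefactor $(m!\,\Gamma(\tfrac{n}{2}+k+m))^{-1}$ and bracket $\psi(1+m)+\psi(\tfrac{n}{2}+k+m)-2\ln(\tfrac{s}{2})$.

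The only step that is not mere bookkeeping is the first one — identifying $\mathrm{F.p.}\lim r^{\lambda}\mathsf{Y}_{k}(\mathbf{w})$ with the pseudofunction and justifying that $\mathcal{F}$ may be moved through the finite part limit — and this is settled exactly as in the $k=0$ case already used for Lemma~\ref{lemma 2.5}, so no new obstacle appears; everything after it is the same $\Gamma$-function manipulation carried out there.
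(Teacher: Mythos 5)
Your proposal follows exactly the paper's route: the paper derives Lemma~\ref{Lemma 4} simply by remarking ``If we now use the Lemma~\ref{Lemma 2} as before,'' meaning precisely the finite-part computation you spell out, applied to~(\ref{a7}) at $\lambda=-(n+k+2m)$ via Lemma~\ref{Lemma 2}. Your change of variable $\mu=(k+n+\lambda)/2$, the evaluation of $g(-m)$ and $g'(-m)$, and the preliminary identification $\mathrm{F.p.}\lim_{\lambda\to-(n+k+2m)}r^{\lambda}\mathsf{Y}_k(\mathbf{w})=\mathcal{P}f(r^{-(n+k+2m)})\mathsf{Y}_k(\mathbf{w})$ just make explicit what the paper's ``as before'' compresses, and they reproduce the stated formula correctly.
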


Let now $a$ be a smooth function on the sphere, $a\in\mathcal{D}\left(
\mathbb{S}\right)  .$ Then we can write it in terms of spherical harmonics as%
\begin{equation*}
a\left(  \mathbf{w}\right)  =\sum_{m=0}^{\infty}\mathsf{Y}_{m}\left(
\mathbf{w}\right)  \,, 
\end{equation*}
where $\mathsf{Y}_{m}=\mathsf{Y}_{m}\left\{  a\right\}  \in\mathcal{H}_{m}$
are given as $\mathsf{Y}_{m}\left(  \mathbf{w}\right)  =\int_{\mathbb{S}%
}\mathsf{Z}_{m}\left(  \mathbf{w},\mathbf{v}\right)  a\left(  \mathbf{v}%
\right)  \,\mathrm{d}\sigma\left(  \mathbf{v}\right)  ;$ here $\mathsf{Z}%
_{m}\left(  \mathbf{w},\mathbf{v}\right)  $\ is the reproducing kernel of
$\mathcal{H}_{m},$ namely \cite[Thm. 5.38]{AxlerBourdonRamey}
\begin{equation*}
\left(  n+2m-2\right)  \sum_{q=0}%
^{\hbox{$[\kern-.41em [$}\,\,m/2\,\hbox{$]\kern-.41em ]$}}\left(  -1\right)
^{q}\frac{n\left(  n+2\right)  \cdots\left(  n+2m-2q-4\right)  }%
{2^{q}q!\left(  m-2q\right)  !}\left(  \mathbf{w}\cdot\mathbf{v}\right)
^{m-2q}. 
\end{equation*}
We thus obtain the following.

\begin{proposition}
\label{Lemma 5}If $\beta\neq0,1,2,\ldots$ then%
\begin{equation*}
\mathcal{F}\left\{  \mathcal{P}f\left(  \frac{1}{r^{n+\beta}}\right)  a\left(
\mathbf{w}\right)  ;\mathbf{u}\right\}  =\mathcal{P}f\left(  s^{\beta}\right)
\mathcal{K}_{\beta}\left\{  a\left(  \mathbf{w}\right)  ;\mathbf{v}\right\}
\,,
\end{equation*}
where $\mathcal{K}_{\beta}\left\{  a\left(  \mathbf{w}\right)  ;\mathbf{v}%
\right\}  =\left\langle K_{\beta}\left(  \mathbf{w},\mathbf{v}\right)
,a\left(  \mathbf{w}\right)  \right\rangle _{\mathbf{w}}\,,$and%
\begin{equation}
K_{\beta}\left(  \mathbf{w},\mathbf{v}\right)  =\sum_{m=0}^{\infty}%
\kappa_{\beta,m}\mathsf{Z}_{m}\left(  \mathbf{w},\mathbf{v}\right)
\,,\ \ \kappa_{\beta,m}=\frac{i^{m}\pi^{n/2}2^{-\beta}\Gamma\left(
\frac{m-\beta}{2}\right)  }{\Gamma\left(  \frac{m+n+\beta}{2}\right)  }\,.
\label{a.13p}%
\end{equation}

\end{proposition}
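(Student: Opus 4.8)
The plan is to reduce Proposition \ref{Lemma 5} to the cases already handled in Lemmas \ref{lemma 3} and \ref{Lemma 4} via the spherical harmonic expansion $a(\mathbf{w})=\sum_{m=0}^{\infty}\mathsf{Y}_m(\mathbf{w})$ with $\mathsf{Y}_m(\mathbf{w})=\langle\mathsf{Z}_m(\mathbf{w},\mathbf{v}),a(\mathbf{v})\rangle_{\mathbf{v}}$. First I would observe that, since $a\in\mathcal{D}(\mathbb{S})$, the coefficient functions $\mathsf{Y}_m$ decay rapidly in $m$ (in every Sobolev norm on $\mathbb{S}$), so the series $\sum_m \mathcal{P}f(r^{-n-\beta})\mathsf{Y}_m(\mathbf{w})$ converges in $\mathcal{D}'_{\ast}(\mathbb{R}^n)$ — or, more to the point, in whichever dual space we are pairing against — and the Fourier transform, being continuous, may be applied term by term. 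Thus
\[
\mathcal{F}\left\{\mathcal{P}f\left(\frac{1}{r^{n+\beta}}\right)a(\mathbf{w});\mathbf{u}\right\}=\sum_{m=0}^{\infty}\mathcal{F}\left\{\mathcal{P}f\left(\frac{1}{r^{n+\beta}}\right)\mathsf{Y}_m(\mathbf{w});s\mathbf{v}\right\}.
\]

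Next I would apply Lemma \ref{lemma 3} with $\lambda=-n-\beta$ and $k=m$: since $\beta\neq0,1,2,\ldots$ we have $\lambda\neq -n-m,-n-m-2,\ldots$ for every $m$, so formula (\ref{a7}) gives
\[
\mathcal{F}\left\{\mathcal{P}f\left(\frac{1}{r^{n+\beta}}\right)\mathsf{Y}_m(\mathbf{w});s\mathbf{v}\right\}=\frac{i^{m}\pi^{n/2}2^{-\beta}\Gamma\left(\frac{m-\beta}{2}\right)}{\Gamma\left(\frac{m+n+\beta}{2}\right)}\,s^{\beta}\,\mathsf{Y}_m(\mathbf{v})=\kappa_{\beta,m}\,s^{\beta}\,\mathsf{Y}_m(\mathbf{v}),
\]
with $\kappa_{\beta,m}$ exactly as in (\ref{a.13p}). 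Here one should note that for $\beta$ not a nonnegative integer the power $s^{\beta}$ may itself be singular at $s=0$, so $s^\beta$ should be read as the finite-part regularization $\mathcal{P}f(s^\beta)$; the coefficient $\mathcal{P}f(s^\beta)$ is then pulled out of the sum, which is legitimate because it multiplies the $\mathbf{v}$-dependence only through $\mathsf{Y}_m(\mathbf{v})$ and the resulting series $\sum_m\kappa_{\beta,m}\mathsf{Y}_m(\mathbf{v})$ converges (again by rapid decay of $\mathsf{Y}_m$ together with the polynomial growth of $\kappa_{\beta,m}$ in $m$, which follows from Stirling applied to the ratio of Gamma functions).

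Finally, substituting $\mathsf{Y}_m(\mathbf{v})=\langle\mathsf{Z}_m(\mathbf{w},\mathbf{v}),a(\mathbf{w})\rangle_{\mathbf{w}}$ and interchanging the (convergent) sum with the pairing identifies the result as $\mathcal{P}f(s^\beta)\,\langle K_\beta(\mathbf{w},\mathbf{v}),a(\mathbf{w})\rangle_{\mathbf{w}}$ with $K_\beta(\mathbf{w},\mathbf{v})=\sum_m\kappa_{\beta,m}\mathsf{Z}_m(\mathbf{w},\mathbf{v})$, which is precisely the asserted formula. The main obstacle — the only genuinely non-formal point — is justifying the termwise Fourier transform and the interchange of summation with the distributional pairing, i.e., establishing convergence of $\sum_m\mathcal{P}f(r^{-n-\beta})\mathsf{Y}_m(\mathbf{w})$ in the relevant topology and controlling the growth of $\kappa_{\beta,m}$; once the decay estimate on the $\mathsf{Y}_m$ (standard for $C^\infty$ functions on the sphere) is paired with the Stirling bound $|\kappa_{\beta,m}|=O(m^{N})$ for some $N$ depending on $\operatorname{Re}\beta$, everything goes through routinely.
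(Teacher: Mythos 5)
Your argument is correct and follows exactly the route the paper takes: expand $a$ in spherical harmonics, apply Lemma \ref{lemma 3} with $\lambda=-n-\beta$ to each term (the condition $\beta\neq 0,1,2,\ldots$ guaranteeing $\lambda$ avoids the pole set $-n-m-2\mathbb{N}$ for every $m$), and recollect the coefficients $\kappa_{\beta,m}$ into the kernel $K_\beta$. The paper states this derivation tersely (``we thus obtain the following'' right after introducing the spherical-harmonic expansion of $a$), whereas you make the termwise Fourier transform and the growth/decay bookkeeping explicit, which is a welcome clarification but not a different method.
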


The operator $\mathcal{K}_{\beta}$ is analytic for $\beta\neq0,1,2,\ldots$;
for $\beta=q\in\mathbb{N}$ we have the next formula.

\begin{proposition}
\label{Lemma 6}If $q=0,1,2,\ldots$ then
\begin{equation*}
\mathcal{F}\left\{  \mathcal{P}f\left(  \frac{1}{r^{n+q}}\right)  a\left(
\mathbf{w}\right)  ;\mathbf{u}\right\}  =s^{q}\left(  \mathcal{K}_{q}\left\{
a\left(  \mathbf{w}\right)  ;\mathbf{v}\right\}  +\mathcal{L}_{q}\left\{
a\left(  \mathbf{w}\right)  ;\mathbf{v}\right\}  \ln s\right)  \,,
\end{equation*}
where $\mathcal{K}_{q}\left\{  a\left(  \mathbf{w}\right)  ;\mathbf{v}%
\right\}  =\left\langle K_{q}\left(  \mathbf{w},\mathbf{v}\right)  ,a\left(
\mathbf{w}\right)  \right\rangle _{\mathbf{w}}\,,$%
\begin{equation*}
K_{q}\left(  \mathbf{w},\mathbf{v}\right)  =\sum_{m=0}^{\infty}\kappa
_{q,m}\mathsf{Z}_{m}\left(  \mathbf{w},\mathbf{v}\right)  \,, 
\end{equation*}
the constants $\kappa_{q,m}$ being given by (\ref{a.13p}) if $m\neq
q,q-2,\ldots$ and as%
\begin{equation*}
\kappa_{q,q-2m}=\frac{i^{q}\pi^{n/2}2^{-q}}{m!\Gamma\left(  \frac{n}%
{2}+q-m\right)  }\left\{  \psi\left(  1+m\right)  +\psi\left(  \frac{n}%
{2}+q-m\right)  +2\ln2\right\}  \,, 
\end{equation*}
for $0\leq m\leq\hbox{$[\kern-.41em [$}\,\,q/2\,\hbox{$]\kern-.41em ]$}.$ On
the other hand, $\mathcal{L}_{q}\left\{  a\left(  \mathbf{w}\right)
;\mathbf{v}\right\}  =\left\langle L_{q}\left(  \mathbf{w},\mathbf{v}\right)
,a\left(  \mathbf{w}\right)  \right\rangle _{\mathbf{w}}\,,$%
\begin{equation}
L_{q}\left(  \mathbf{w},\mathbf{v}\right)  =\sum_{m=0}%
^{\hbox{$[\kern-.41em [$}\,\,q/2\,\hbox{$]\kern-.41em ]$}}\lambda
_{q,q-2m}\mathsf{Z}_{q-2m}\left(  \mathbf{w},\mathbf{v}\right)
\,,\ \ \ \lambda_{q,q-2m}=\frac{-i^{q}2^{-q+1}\pi^{n/2}}{m!\Gamma\left(
\frac{n}{2}+q-m\right)  }\,. \label{a.16p}%
\end{equation}

\end{proposition}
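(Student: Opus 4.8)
The plan is to decompose $a$ into spherical harmonics and treat the individual modes with results already in hand: the one–mode version of Proposition \ref{Lemma 5} for the modes that remain analytic in the spectral parameter, and Lemma \ref{Lemma 4} for the modes that "resonate" with $r^{-n-q}$. Concretely, I would first write $a(\mathbf{w})=\sum_{m=0}^{\infty}\mathsf{Y}_{m}(\mathbf{w})$ with $\mathsf{Y}_{m}=\mathsf{Y}_{m}\{a\}\in\mathcal{H}_{m}$ and $\mathsf{Y}_{m}(\mathbf{v})=\langle\mathsf{Z}_{m}(\mathbf{w},\mathbf{v}),a(\mathbf{w})\rangle_{\mathbf{w}}$, the series converging in $\mathcal{D}(\mathbb{S})$. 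Since $b\mapsto\mathcal{P}f(r^{-n-q})b(\mathbf{w})$ is linear and continuous from $\mathcal{D}(\mathbb{S})$ into $\mathcal{S}^{\prime}(\mathbb{R}^{n})$ (the radial finite–part functional depends continuously on the finitely many relevant Taylor coefficients of the angular average of $b\phi$) and $\mathcal{F}$ is continuous, it suffices to compute $\mathcal{F}\{\mathcal{P}f(r^{-n-q})\mathsf{Y}_{m}(\mathbf{w});\mathbf{u}\}$ for each $m$ and to sum. Moreover $\mathcal{P}f(r^{-n-q})\mathsf{Y}_{m}$ is the finite–part value at $\lambda=-(n+q)$ of the meromorphic family $\lambda\mapsto r^{\lambda}\mathsf{Y}_{m}(\mathbf{w})$ of tempered distributions, so by continuity and linearity of $\mathcal{F}$ one has $\mathcal{F}\{\mathcal{P}f(r^{-n-q})\mathsf{Y}_{m};\mathbf{u}\}=\mathrm{F.p.}\lim_{\lambda\to-(n+q)}\mathcal{F}\{r^{\lambda}\mathsf{Y}_{m};\mathbf{u}\}$, with the right–hand side given by \eqref{a7}. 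The dichotomy that organizes the computation is whether $m$ lies in the resonant set $R=\{\,q-2j:0\le j\le\lfloor q/2\rfloor\,\}$: reading off \eqref{a7}, the family $\lambda\mapsto\mathcal{F}\{r^{\lambda}\mathsf{Y}_{m};s\mathbf{v}\}$ has a pole at $\lambda=-(n+q)$ exactly when $\Gamma((m+n+\lambda)/2)=\Gamma((m-q)/2)$ is singular there, i.e. precisely for $m\in R$ (the other $\Gamma$ in \eqref{a7} and the factor $s^{-(\lambda+n)}$ are both regular and nonvanishing near that value of $\lambda$).

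For a non-resonant $m$, \eqref{a7} is analytic at $\lambda=-(n+q)$; letting $\lambda\to-(n+q)$ — which is exactly the computation in the proof of Proposition \ref{Lemma 5} performed for a single spherical harmonic in the case where the pole of $K_{\beta}$ is absent — gives $\mathcal{F}\{\mathcal{P}f(r^{-n-q})\mathsf{Y}_{m}(\mathbf{w});s\mathbf{v}\}=\kappa_{q,m}\,s^{q}\,\mathsf{Y}_{m}(\mathbf{v})$ with $\kappa_{q,m}$ as in \eqref{a.13p}. For a resonant mode $m=q-2j$ I would instead write $n+q=n+(q-2j)+2j$ and invoke Lemma \ref{Lemma 4} with its "$k$" equal to $q-2j$ and its summation index equal to $j$; this yields a term proportional to $(s/2)^{q}\{\psi(1+j)+\psi(n/2+q-j)-2\ln(s/2)\}\mathsf{Y}_{q-2j}(\mathbf{v})$. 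The remaining work is purely cosmetic: using $(-1)^{j}i^{q-2j}=i^{q}$, $(s/2)^{q}=2^{-q}s^{q}$, and $-2\ln(s/2)=2\ln2-2\ln s$, the logarithm–free part reproduces the coefficient $\kappa_{q,q-2j}$ and the coefficient of $\ln s$ reproduces $\lambda_{q,q-2j}$, exactly as in the statement. This is also where the $\ln s$ in the conclusion originates — from the simple pole of $\Gamma((m+n+\lambda)/2)$ at $\lambda=-(n+q)$ multiplying the $-\varepsilon\ln s$ term in the expansion of $s^{-(\lambda+n)}$ about that value (together with the contribution of the first derivative of the analytic prefactor, whose $\ln2$ and digamma pieces are absorbed into $\kappa_{q,q-2j}$) — but all of this is already packaged inside Lemma \ref{Lemma 4}, so nothing needs to be redone via Lemma \ref{Lemma 2}.

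Finally I would reassemble. Summing over $m$, the $\kappa$–coefficients coming both from the non-resonant modes and from the logarithm–free part of the resonant modes combine into $\langle\sum_{m}\kappa_{q,m}\mathsf{Z}_{m}(\mathbf{w},\mathbf{v}),a(\mathbf{w})\rangle_{\mathbf{w}}=\mathcal{K}_{q}\{a;\mathbf{v}\}$, while the finitely many logarithmic contributions combine into $\langle\sum_{j=0}^{\lfloor q/2\rfloor}\lambda_{q,q-2j}\mathsf{Z}_{q-2j}(\mathbf{w},\mathbf{v}),a(\mathbf{w})\rangle_{\mathbf{w}}=\mathcal{L}_{q}\{a;\mathbf{v}\}$, giving $s^{q}\bigl(\mathcal{K}_{q}\{a;\mathbf{v}\}+\mathcal{L}_{q}\{a;\mathbf{v}\}\ln s\bigr)$, as claimed; convergence of the $\kappa$–series tested against $a$ is routine, from the rapid decay of the coefficients $\mathsf{Y}_{m}\{a\}$ against the ratio–of–Gammas decay of $\kappa_{q,m}$. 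I do not expect a genuine obstacle here: the only points deserving an explicit line are the interchange of $\mathcal{F}$ with the series and with the finite–part regularization (immediate from continuity and linearity of $\mathcal{F}$, as noted above), and the verification that $\mathcal{P}f(r^{-n-q})\mathsf{Y}_{m}$ in the sense of Definition \ref{Definition Finite Part} coincides with the meromorphic finite part just used. The real "difficulty" is bookkeeping — matching Lemma \ref{Lemma 4}'s parametrization ($q=(q-2j)+2j$) to the target formula while tracking the powers of $i$ and the $\ln(s/2)$ versus $\ln2$/$\ln s$ split.
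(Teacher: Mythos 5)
Your proposal is correct and is essentially the paper's (implicit) argument: decompose $a$ into spherical harmonics, use Lemma \ref{lemma 3} for the non-resonant modes $\mathsf{Y}_m$ with $m\neq q,q-2,\ldots$ and Lemma \ref{Lemma 4}, applied with $k=q-2j$ and summation index $j$, for the finitely many resonant modes, then reassemble so that only the resonant modes contribute a $\ln s$ term. The bookkeeping you flag — $(-1)^{j}i^{q-2j}=i^{q}$, $(s/2)^{q}=2^{-q}s^{q}$, and $-2\ln(s/2)=2\ln 2-2\ln s$ — is exactly what recovers $\kappa_{q,q-2j}$ and $\lambda_{q,q-2j}$ in the statement.
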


\subsection{The operators $\mathcal{K}_{\beta}$%
\label{Subsection: The operators}}

Notice that $\mathcal{K}_{\beta}$ is the analytic continuation of an integral
operator $a\rightsquigarrow\int_{\mathbb{S}}K_{\beta}\left(  \mathbf{w}%
,\mathbf{v}\right)  a\left(  \mathbf{w}\right)  \,\mathrm{d}\sigma\left(
\mathbf{w}\right)  ,$ namely, if $\Re e\,\beta>0$ then employing polar
coordinates we obtain%
\begin{equation*}
\mathcal{F}\left\{  r^{-n-\beta}a\left(  \mathbf{w}\right)  ;\mathbf{u}%
\right\}  =s^{\beta}\Gamma\left(  -\beta\right)  e^{-i\pi\beta/2}%
\int_{\mathbb{S}}a\left(  \mathbf{w}\right)  \left(  \mathbf{w\cdot
v}+i0\right)  ^{\beta}\,\mathrm{d}\sigma\left(  \mathbf{w}\right)  \,,
\end{equation*}
since in dimension $1$ \cite{Kanwal} $\mathcal{F}\left\{  x_{+}^{-1-\beta
};t\right\}  =\Gamma\left(  -\beta\right)  e^{-i\pi\beta/2}\left(
t+i0\right)  ^{\beta}$ if $\beta\neq0,1,2,\ldots$ Thus,
\begin{equation}
K_{\beta}\left(  \mathbf{w},\mathbf{v}\right)  =\Gamma\left(  -\beta\right)
e^{-i\pi\beta/2}\left(  \mathbf{w\cdot v}+i0\right)  ^{\beta}, \label{a.19}%
\end{equation}
a distributional kernel for $\beta\neq0,1,2,\ldots$ that becomes an integral
operator if $\Re e\,\beta>0.$

Observe that the distribution $\left(  t+i0\right)  ^{\beta}$ is an entire
function of $\beta.$ The singularity of $K_{\beta}\left(  \mathbf{w}%
,\mathbf{v}\right)  $ at $\beta=q\in\mathbb{N}$ is produced by the term
$\Gamma\left(  -\beta\right)  .$ The formula of the Proposition \ref{Lemma 6}
can therefore be derived by computing the finite part of the limit of
$K_{\beta}\left(  \mathbf{w},\mathbf{v}\right)  s^{\beta}$\ as $\beta
\rightarrow q,$ because the Lemma \ref{Lemma 2} gives%
\begin{equation*}
\mathrm{F.p.}\lim_{\lambda\rightarrow-q}\Gamma\left(  \lambda\right)
a^{\lambda}=\frac{\left(  -1\right)  ^{q}}{q!}\left(  \psi\left(  q+1\right)
+\ln a\right)  a^{-q}. 
\end{equation*}
Hence, since $\left(  \mathbf{w\cdot v}+i0\right)  ^{q}=\left(  \mathbf{w\cdot
v}\right)  ^{q},$
\begin{equation}
K_{q}\left(  \mathbf{w},\mathbf{v}\right)  =\frac{\left(  -1\right)
^{q}e^{-i\pi q/2}}{q!}\left(  \psi\left(  q+1\right)  +\ln\left(
\frac{e^{i\pi/2}}{\left(  \mathbf{w\cdot v}+i0\right)  }\right)  \right)
\left(  \mathbf{w\cdot v}\right)  ^{q}, \label{a.21}%
\end{equation}
and%
\begin{equation}
L_{q}\left(  \mathbf{w},\mathbf{v}\right)  =\frac{\left(  -1\right)
^{q}e^{-i\pi q/2}}{q!}\left(  \mathbf{w\cdot v}\right)  ^{q}. \label{a.22}%
\end{equation}

It also interesting to observe the form of $K_{-m}\left(  \mathbf{w}%
,\mathbf{v}\right)  $ for $m=1,2,3,\ldots,$%
\begin{align}
K_{-m}\left(  \mathbf{w},\mathbf{v}\right) & =\left(  m-1\right)  !i^{m}\left(
\mathbf{w\cdot v}+i0\right)  ^{-m} \label{a.23}%
\\
&  =\left(  m-1\right)  !i^{m}\left(  \mathbf{w\cdot v}\right)  ^{-m}%
-\pi\left(  -i\right)  ^{m+1}\delta^{\left(  m-1\right)  }\left(
\mathbf{w\cdot v}\right) 
\nonumber
\\
&  =-2\pi\left(  -i\right)  ^{m+1}\delta^{+\left(  m-1\right)  }\left(
\mathbf{w\cdot v}\right)  \,, \nonumber
\end{align}
where $\delta^{+\left(  m-1\right)  }\left(  x\right)  $ is the Heisenberg
delta function \cite[(2.61), (2.63)]{GreenBook}.

If $\beta\neq0,1,2,\ldots,$ the coefficients $\kappa_{\beta,m}$ never vanish
for $\beta\neq-n-q,$ $q=0,1,2,\ldots$, but they could vanish for some
$m$\ when $\beta=-n-q,$ so that the operator $\mathcal{K}_{\beta}$ is an
isomorphism of $\mathcal{D}\left(  \mathbb{S}\right)  $\ for $\beta\neq-n-q,$
but $\mathcal{K}_{-n-q}\left(  \mathcal{D}\left(  \mathbb{S}\right)  \right)
$ is a subspace of finite codimension of $\mathcal{D}\left(  \mathbb{S}%
\right)  .$ The Fourier inversion formula yields the inverses of the operators
$\mathcal{K}_{\beta}$ for $\beta\in\mathbb{C}\setminus\mathbb{Z}$ or for
$\beta\in\left\{  1-n,2-n,\ldots,-1\right\}  $ as%
\begin{equation}
\mathcal{K}_{\beta}^{-1}\left\{  A\left(  \mathbf{v}\right)  ;\mathbf{w}%
\right\}  =\frac{1}{\left(  2\pi\right)  ^{n}}\mathcal{K}_{-n-\beta}\left\{
A\left(  \mathbf{v}\right)  ;-\mathbf{w}\right\}  \,,\ \ \ A\in\mathcal{D}%
\left(  \mathbb{S}\right)  \,. \label{a.24}%
\end{equation}

\subsection{The operators $\mathfrak{K}_{q}$ and $\mathfrak{L}_{q}%
$\label{Subsection: the operators k and l}}

It is convenient to consider a variant of the operators $\mathcal{K}_{\beta}$
in case $\beta\in\mathbb{Z}.$ Let us start with some notation. If
$q\in\mathbb{N}$ we denote as $\mathcal{P}_{q}$ the space of restrictions of
homogeneous polynomials of degree $q$ to $\mathbb{S},$ that is $\mathcal{P}%
_{q}=\mathcal{H}_{q}\oplus\mathcal{H}_{q-2}\oplus\mathcal{H}_{q-4}\oplus
\cdots.$\ Let $\mathcal{X}$ be a space of functions or generalized functions
over $\mathbb{S},$ as $\mathcal{D}\left(  \mathbb{S}\right)  ,$ $L^{2}\left(
\mathbb{S}\right)  ,$ or $\mathcal{D}^{\prime}\left(  \mathbb{S}\right)  ,$
that equals the closure in $\mathcal{X}$ of the sum $\mathcal{H}_{0}%
\oplus\mathcal{H}_{1}\oplus\mathcal{H}_{2}\oplus\cdots$\footnote{Such closures
will be denoted as $\widehat{\bigoplus}_{\mathcal{X}}\left.  ~\right\vert
_{m=0}^{\infty}\mathcal{H}_{m}$}$.$ Then $\mathcal{X}_{q}$ is the space
$\mathcal{X}$ if $1-n\leq q\leq-1;$ if $q\geq0,$ $\mathcal{X}_{q}$ is the sum
$\widehat{\bigoplus}_{\mathcal{X}}\left.  ~\right\vert _{m\neq q,q-2.\ldots
}\mathcal{H}_{m},$ while if $q\leq-n$ then $\mathcal{X}_{q}=\mathcal{X}%
_{-n-q}.$ Notice that%
\begin{equation*}
\mathcal{X}_{q}\oplus\mathcal{P}_{-n-q}=\mathcal{X}\,,\ \ q\leq
-n\,,\ \ \ \ \mathcal{X}_{q}\oplus\mathcal{P}_{q}=\mathcal{X}\,,\ \ q\geq0\,.
\end{equation*}

We define the operators $\mathfrak{K}_{q}:\mathcal{D}_{q}\longrightarrow
\mathcal{D}_{q}$ as $\Pi\mathcal{K}_{q}\iota,$\ where $\iota$ is the canonical
injection of $\mathcal{D}_{q}$ into $\mathcal{D}\left(  \mathbb{S}\right)  $
and $\Pi$ the canonical projection of $\mathcal{D}\left(  \mathbb{S}\right)  $
onto $\mathcal{D}_{q}.$ We can also consider the $\mathfrak{K}_{q}$ as
operators from $\mathcal{D}_{q}^{\prime}$ to itself, by duality or employing
the expansion (\ref{a.13p}). The Propositions \ref{Lemma 5} and \ref{Lemma 6}
immediately give the ensuing.

\begin{proposition}
\label{Propposition kq}The operators $\mathfrak{K}_{q}$ are isomorphisms of
the space $\mathcal{X}_{q}$ to itself for\footnote{The results also holds for
$\mathcal{X}=L^{2}\left(  \mathbb{S}\right)  ,$ but we will not need this case
presently.} $\mathcal{X}=\mathcal{D}\left(  \mathbb{S}\right)  $ or
$\mathcal{D}^{\prime}\left(  \mathbb{S}\right)  .$ Its inverses are given as%
\begin{equation}
\mathfrak{K}_{q}^{-1}\left\{  A\left(  \mathbf{v}\right)  ;\mathbf{w}\right\}
=\frac{1}{\left(  2\pi\right)  ^{n}}\mathfrak{K}_{-n-q}\left\{  A\left(
\mathbf{v}\right)  ;-\mathbf{w}\right\}  \,,\ \ \ A\in\mathcal{X}_{q}\,.
\label{a 28}%
\end{equation}

\end{proposition}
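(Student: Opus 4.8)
The plan is to reduce the statement to an identity between scalars, using that all the operators in play act \emph{diagonally} with respect to the spherical–harmonic decomposition. First I would record the basic mechanism: for $\mathsf{Y}_m\in\mathcal{H}_m$ the reproducing property of $\mathsf{Z}_m$ gives $\mathcal{K}_\beta\{\mathsf{Y}_m(\mathbf{w});\mathbf{v}\}=\kappa_{\beta,m}\mathsf{Y}_m(\mathbf{v})$; moreover, on $\mathcal{D}_q$ the logarithmic part $\mathcal{L}_q$ of Proposition \ref{Lemma 6} vanishes (its kernel $L_q$ involves only $\mathsf{Z}_q,\mathsf{Z}_{q-2},\dots$, i.e.\ it is the projection onto $\mathcal{P}_q$), and $\mathcal{K}_q\iota$ already lands in $\mathcal{D}_q$, so the projection $\Pi$ is trivial there. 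Hence, writing $I_q$ for the index set of $\mathcal{X}_q$, the operator $\mathfrak{K}_q$ is precisely the multiplier $\mathsf{Y}_m\mapsto\kappa_{q,m}\mathsf{Y}_m$, $m\in I_q$, with $\kappa_{q,m}$ given by \eqref{a.13p}. Since a sequence of multipliers of at most polynomial growth defines a continuous endomorphism of both $\mathcal{D}(\mathbb{S})$ and $\mathcal{D}^{\prime}(\mathbb{S})$ (Stirling gives $\kappa_{q,m}=O(m^{C})$), and since the expansions $a=\sum_m\mathsf{Y}_m\{a\}$ converge in these topologies, it suffices to prove the claim on each $\mathcal{H}_m$, $m\in I_q$.

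Next I would verify that $\kappa_{q,m}\neq0$ for every $m\in I_q$, running through the three regimes in the definition of $\mathcal{X}_q$. If $q\geq0$, the excluded indices $m=q,q-2,\dots$ are exactly the poles of $\Gamma\!\left(\tfrac{m-q}{2}\right)$, so on $I_q$ both Gamma factors in \eqref{a.13p} are finite and nonzero ($\Gamma$ has no zeros, and $\tfrac{m+n+q}{2}>0$). If $1-n\leq q\leq-1$, then $m-q\geq1$ and $m+n+q\geq1$ for all $m\geq0$, so again $\kappa_{q,m}\neq0$. If $q\leq-n$, then $\mathcal{X}_q=\mathcal{X}_{-n-q}$ with $-n-q\geq0$, and the indices removed there, $m=-n-q,-n-q-2,\dots$, are exactly the poles of $1/\Gamma\!\left(\tfrac{m+n+q}{2}\right)$, i.e.\ the zeros of $\kappa_{q,m}$, so $\kappa_{q,m}\neq0$ on $I_q$ in this case too. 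In particular $\mathfrak{K}_q$ is injective with dense range.

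Finally I would identify the inverse. Put $T\{A;\mathbf{w}\}=(2\pi)^{-n}\mathfrak{K}_{-n-q}\{A;-\mathbf{w}\}$; note that $I_{-n-q}=I_q$ in all three regimes, so $T$ maps $\mathcal{X}_q$ into $\mathcal{X}_q$. By the first paragraph together with the parity $\mathsf{Y}_m(-\mathbf{w})=(-1)^m\mathsf{Y}_m(\mathbf{w})$, $T$ is the multiplier $\mathsf{Y}_m\mapsto(2\pi)^{-n}(-1)^m\kappa_{-n-q,m}\mathsf{Y}_m$, so $T\mathfrak{K}_q$ and $\mathfrak{K}_q T$ are both the multiplier with symbol $(2\pi)^{-n}(-1)^m\kappa_{q,m}\kappa_{-n-q,m}$. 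Substituting \eqref{a.13p},
\[
\kappa_{q,m}\kappa_{-n-q,m}=i^{2m}\pi^{n}2^{-q}2^{\,n+q}\,\frac{\Gamma\!\left(\tfrac{m-q}{2}\right)\Gamma\!\left(\tfrac{m+n+q}{2}\right)}{\Gamma\!\left(\tfrac{m+n+q}{2}\right)\Gamma\!\left(\tfrac{m-q}{2}\right)}=(-1)^{m}(2\pi)^{n}\,,
\]
so the symbol is identically $1$ and $T\mathfrak{K}_q=\mathfrak{K}_q T=\mathrm{id}$ on $\mathcal{X}_q$. Hence $\mathfrak{K}_q$ is a (topological) isomorphism with $\mathfrak{K}_q^{-1}=T$, which is \eqref{a 28}; continuity of the inverse is automatic since $1/\kappa_{q,m}$ is again polynomially bounded.

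The only genuinely delicate point is the bookkeeping of the index sets $I_q$ and the verification that $\kappa_{q,m}$ is finite \emph{and} nonzero precisely on $I_q$; everything else — the diagonal action, the Gamma-function cancellation, and the continuity of polynomially bounded multipliers on $\mathcal{D}(\mathbb{S})$ and $\mathcal{D}^{\prime}(\mathbb{S})$ — is routine given the harmonic analysis on $\mathbb{S}$ already used in Section \ref{Section: Some Fourier Transforms}. (Alternatively, for $1-n\leq q\leq-1$ one has $\mathfrak{K}_q=\mathcal{K}_q$ on $\mathcal{X}=\mathcal{X}_q$, so \eqref{a 28} is literally \eqref{a.24}, and the remaining integer values follow either from the diagonal computation above or by taking the finite-part limit $\beta\to q$ in \eqref{a.24}, exactly as in the passage from Proposition \ref{Lemma 5} to Proposition \ref{Lemma 6}.)
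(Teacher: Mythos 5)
Your proof is correct and follows essentially the same approach as the paper: the paper dispatches the proposition by declaring that it follows from Propositions \ref{Lemma 5} and \ref{Lemma 6}, and you make explicit the underlying diagonal-multiplier structure, the bookkeeping of the index set $I_q$ against the zeros and poles of $\kappa_{\cdot,m}$, and the Gamma cancellation $\kappa_{q,m}\kappa_{-n-q,m}=(-1)^m(2\pi)^n$ — which is precisely the content implicit in that appeal.
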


Observe that for $\mathcal{X}=\mathcal{D}\left(  \mathbb{S}\right)  $ or
$\mathcal{D}^{\prime}\left(  \mathbb{S}\right)  $ we have $\mathcal{X}%
_{q}=\mathcal{K}_{q}\left(  \mathcal{X}\right)  ,$ for $q<0.$ This is not true
for $q\geq0,$ but we have $\mathcal{X}_{q}=\Pi\mathcal{K}_{q}\left(
\mathcal{X}\right)  $ where $\Pi$ is the canonical projection of $\mathcal{X}$
onto $\mathcal{X}_{q}.$

The operators $\mathfrak{L}_{q}:\mathcal{P}_{q}\longrightarrow\mathcal{P}_{q}$
are defined as $\Pi\mathcal{L}_{q}\iota,$\ where $\iota$ is the canonical
injection of $\mathcal{P}_{q}$ into $\mathcal{D}\left(  \mathbb{S}\right)  $
and $\Pi$ the canonical projection of $\mathcal{D}\left(  \mathbb{S}\right)  $
onto $\mathcal{P}_{q}.$ They are isomorphisms of the space $\mathcal{P}_{q}.$

\section{The Fourier transform of thick test
functions\label{Section: The Fourier transform of thick test functions}}

In this section we will construct a space $\mathcal{W}\left(  \mathbb{R}%
^{n}\right)  $ such that it is possible to define an operator
\begin{equation}
\mathcal{F}_{\ast,\text{t}}:\mathcal{S}_{\ast}\left(  \mathbb{R}^{n}\right)
\longrightarrow\mathcal{W}\left(  \mathbb{R}^{n}\right)  \,, \label{b 1}%
\end{equation}
the Fourier transform of \emph{test functions,} which has the expected
properties of such a transform.

Let us start by observing that if $\phi$ is a thick test function in
$\mathbb{R}^{n},$ then in general it is \emph{not} locally integrable at the
origin, so that, in general, it does not give a unique distribution.
Therefore, we cannot imbed $\mathcal{S}_{\ast}\left(  \mathbb{R}^{n}\right)  $
into $\mathcal{S}^{\prime}\left(  \mathbb{R}^{n}\right)  $ and consequently,
if $\phi\in\mathcal{S}_{\ast}\left(  \mathbb{R}^{n}\right)  $ then in general
we cannot define $\mathcal{F}\left(  \phi\right)  $ as a distribution of the
space $\mathcal{S}^{\prime}\left(  \mathbb{R}^{n}\right)  $\footnote{It is
possible to consider $\mathcal{F}\left(  \phi\right)  $\ as a distribution of
the Lizorkin distributional spaces, but for our purposes a different approach
is more convenient.}. On the other hand, any $\phi\in\mathcal{S}_{\ast}\left(
\mathbb{R}^{n}\right)  $ does have regularizations $f\in\mathcal{S}^{\prime
}\left(  \mathbb{R}^{n}\right)  ;$ however $f$ is not unique, since if $f_{0}$
is a regularization, then so are all distributions of the form $f_{0}+g,$
where $\operatorname*{supp}g\subset\left\{  \mathbf{0}\right\}  ,$ that is,
where $g$ is a sum of derivatives of the Dirac delta function at the origin.
It will be convenient to use the notation $\mathcal{S}_{\ast,\text{reg}%
}\left(  \mathbb{R}^{n}\right)  $ for the subspace of $\mathcal{S}^{\prime
}\left(  \mathbb{R}^{n}\right)  $ whose elements are the regularizations of
thick test functions.

Our first task is then to identify those distributions of the form
$\mathcal{F}\left(  f_{0}\right)  $ where $f_{0}$ is a regularization of a
thick test function $\phi\in\mathcal{S}_{\ast}\left(  \mathbb{R}^{n}\right)
.$ It should be clear that if $\Phi_{0}=\mathcal{F}\left(  f_{0}\right)  $ for
one such regularization of $\phi,$ then so are all distributions of the form
$\Phi_{0}+p$ for any polynomial $p$ and, conversely, if $\Phi$ is the Fourier
transform of a regularization of $\phi$ then $\Phi=\Phi_{0}+p$ for some
polynomial $p.$ Observe now that if $\phi\in\mathcal{S}_{\ast}\left(
\mathbb{R}^{n}\right)  $ then $\phi$ coincides with a test function of the
space $\mathcal{S}\left(  \mathbb{R}^{n}\right)  $ outside any ball around the
origin, while at the origin it has a strong asymptotic expansion of the form
$\phi\left(  \mathbf{x}\right)  \sim\sum_{m=-M}^{\infty}a_{m}\left(
\mathbf{w}\right)  r^{m},$ as $r\rightarrow0^{+},$ where $a_{m}\in
\mathcal{D}\left(  \mathbb{S}\right)  $. We can therefore readily obtain the
properties of the Fourier transform $\Phi_{0}=\mathcal{F}\left(  f_{0}\right)
$ of the pseudofunction $f_{0}=\mathcal{P}f\left(  \phi\right)  ,$ the finite
part regularization of $\phi.$ Indeed, $\Phi_{0}$ is smooth in all of
$\mathbb{R}^{n},$ and our analysis of the Section
\ref{Section: Some Fourier Transforms} combined with the techniques of
\cite{YE5} or of \cite[Chpt. 4]{GreenBook} yield the\ asymptotic expansion of
$\Phi_{0}\left(  \mathbf{u}\right)  $ as $\left\vert \mathbf{u}\right\vert
\rightarrow\infty$ as follows: if $\mathbf{u}=s\mathbf{v}$ are polar
coordinates then we have the strong expansion%
\begin{align}
\Phi_{0}\left(  s\mathbf{v}\right)   &  \sim\sum_{m\leq-n}s^{-m-n}\left(
\mathcal{K}_{-m-n}\left\{  a_{m}\left(  \mathbf{w}\right)  ;\mathbf{v}%
\right\}  +\mathcal{L}_{-m-n}\left\{  a_{m}\left(  \mathbf{w}\right)
;\mathbf{v}\right\}  \ln s\right) \nonumber\\
& \qquad  +\sum_{m=-n+1}^{\infty}s^{-m-n}\mathcal{K}_{-m-n}\left\{  a_{m}\left(
\mathbf{w}\right)  ;\mathbf{v}\right\}  \,, \label{b 3}%
\end{align}
as $s\rightarrow\infty,$ uniformly with respect to $\mathbf{v.}$ Consequently
we introduce the space $\mathcal{W}_{\mathrm{pre}}\left(  \mathbb{R}%
^{n}\right)  .$

\begin{definition}
\label{Definition b 1}The space $\mathcal{W}_{\mathrm{pre}}\left(
\mathbb{R}^{n}\right)  $ consists of those smooth functions $\Phi$\ defined in
$\mathbb{R}^{n}$ that admit a strong asymptotic expansion of the form%
\begin{equation}
\Phi\left(  s\mathbf{v}\right)  \sim\sum_{q=0}^{Q}\left(  A_{q}\left(
\mathbf{v}\right)  +P_{q}\left(  \mathbf{v}\right)  \ln s\right)  s^{q}%
+\sum_{q=1}^{\infty}A_{-q}\left(  \mathbf{v}\right)  s^{-q}, \label{b 4}%
\end{equation}
where $A_{q}\in\mathcal{K}_{q}\left(  \mathcal{D}\left(  \mathbb{S}\right)
\right)  $ for $q\leq-n,$ $A_{q}\in\mathcal{D}\left(  \mathbb{S}\right)  $ for
$q>-n,$ and where the $P_{q}\in\mathcal{P}_{q}$ for $q\in\mathbb{N}.$ The
topology of $\mathcal{W}_{\mathrm{pre}}\left(  \mathbb{R}^{n}\right)  $ is
constructed as explained in Subsection
\ref{Subsection: Other spaces of thick distribution}.
\end{definition}

Our analysis so far yields the ensuing result.

\begin{theorem}
\label{Theorem b 1}The Fourier transform is an isomorphism of the vector
spaces $\mathcal{S}_{\ast,\text{reg}}\left(  \mathbb{R}^{n}\right)  $ and
$\mathcal{W}_{\mathrm{pre}}\left(  \mathbb{R}^{n}\right)  .$
\end{theorem}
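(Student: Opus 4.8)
The plan is to exhibit mutually inverse continuous linear maps between $\mathcal{S}_{\ast,\text{reg}}\left(\mathbb{R}^{n}\right)$ and $\mathcal{W}_{\mathrm{pre}}\left(\mathbb{R}^{n}\right)$, and then observe that the classical Fourier transform $\mathcal{F}$ restricted to $\mathcal{S}_{\ast,\text{reg}}\left(\mathbb{R}^{n}\right)$ realizes one of them. First I would check that $\mathcal{F}$ maps $\mathcal{S}_{\ast,\text{reg}}\left(\mathbb{R}^{n}\right)$ \emph{into} $\mathcal{W}_{\mathrm{pre}}\left(\mathbb{R}^{n}\right)$: given $f_0 \in \mathcal{S}_{\ast,\text{reg}}\left(\mathbb{R}^{n}\right)$, write it as $f_0 = \mathcal{P}f\left(\phi\right) + g$ for some $\phi \in \mathcal{S}_{\ast}\left(\mathbb{R}^{n}\right)$ and some $g$ supported at the origin; then $\mathcal{F}\left(g\right)$ is a polynomial, $\mathcal{F}\left(\mathcal{P}f\left(\phi\right)\right) = \Phi_0$ is smooth, and the strong asymptotic expansion \eqref{b 3} derived above shows $\Phi_0$ has exactly the form \eqref{b 4}: matching indices $q = -m-n$, the logarithmic terms occur precisely for $m \leq -n$, i.e. $q \geq 0$, with $A_q \in \mathcal{K}_q\left(\mathcal{D}\left(\mathbb{S}\right)\right)$ there and $A_q \in \mathcal{D}\left(\mathbb{S}\right)$ for $q > -n$, and $P_q = \mathcal{L}_q\left\{a_{-q-n}; \cdot\right\} \in \mathcal{P}_q$ by \eqref{a.16p}. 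Adding the polynomial $\mathcal{F}\left(g\right)$ only modifies the finitely many coefficients $A_q, P_q$ for $0 \leq q \leq Q$ and stays inside $\mathcal{W}_{\mathrm{pre}}\left(\mathbb{R}^{n}\right)$ since $\mathcal{P}_q$ is exactly the span of (restrictions of) degree-$q$ monomials. That $\mathcal{F}$ is injective on $\mathcal{S}_{\ast,\text{reg}}\left(\mathbb{R}^{n}\right)$ is inherited from injectivity of the ordinary Fourier transform on $\mathcal{S}^{\prime}\left(\mathbb{R}^{n}\right)$.

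The substantive half is surjectivity: given $\Phi \in \mathcal{W}_{\mathrm{pre}}\left(\mathbb{R}^{n}\right)$, I must produce a thick test function $\phi$ and a regularization $f_0$ with $\mathcal{F}\left(f_0\right) = \Phi$. The idea is to invert term by term. For each coefficient of the expansion \eqref{b 4}, apply the inverse operators from Subsection \ref{Subsection: the operators k and l}: since $A_{-q} \in \mathcal{D}\left(\mathbb{S}\right)$ for the decaying part and $A_q \in \mathcal{K}_q\left(\mathcal{D}\left(\mathbb{S}\right)\right)$, $P_q \in \mathcal{P}_q$ for the growing part, the operators $\mathcal{K}_\beta^{-1}$ (from \eqref{a.24}), $\mathfrak{K}_q^{-1}$ (from \eqref{a 28}), and $\mathfrak{L}_q^{-1}$ (isomorphisms of $\mathcal{P}_q$) recover, for each $m$, a spherical coefficient $a_m \in \mathcal{D}\left(\mathbb{S}\right)$ so that $\mathcal{F}\left(\mathcal{P}f\left(r^{m}\right) a_m\left(\mathbf{w}\right)\right)$ contributes precisely the $s^{-m-n}$-term (with its log, when present) of \eqref{b 4}. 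By Borel-type summation — the standard construction producing an element of $\mathcal{D}_{\ast}\left(\mathbb{S}\right)$-type spaces with a prescribed strong asymptotic expansion, as in \cite{YE2, YE5} — assemble a $\psi \in \mathcal{S}_{\ast}\left(\mathbb{R}^{n}\right)$ (localized near the origin) with $\psi\left(r\mathbf{w}\right) \sim \sum_m a_m\left(\mathbf{w}\right) r^{m}$. Then $\mathcal{F}\left(\mathcal{P}f\left(\psi\right)\right)$ and $\Phi$ have the same strong asymptotic expansion at infinity, hence $\Phi - \mathcal{F}\left(\mathcal{P}f\left(\psi\right)\right) = \Psi$ is smooth and rapidly decreasing, i.e. $\Psi \in \mathcal{S}\left(\mathbb{R}^{n}\right)$; taking $h = \mathcal{F}^{-1}\left(\Psi\right) \in \mathcal{S}\left(\mathbb{R}^{n}\right) \subset \mathcal{S}^{\prime}\left(\mathbb{R}^{n}\right)$, the distribution $f_0 = \mathcal{P}f\left(\psi\right) + h$ lies in $\mathcal{S}_{\ast,\text{reg}}\left(\mathbb{R}^{n}\right)$ and $\mathcal{F}\left(f_0\right) = \Phi$.

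Finally, linearity of $\mathcal{F}$ is automatic, and bicontinuity follows because the topologies on both spaces are, by construction (Subsection \ref{Subsection: Other spaces of thick distribution}), built from the seminorms controlling the asymptotic coefficients together with a Schwartz-type seminorm on the remainder, and the term-by-term correspondence above is expressed entirely through the continuous isomorphisms $\mathcal{K}_\beta^{\pm 1}, \mathfrak{K}_q^{\pm 1}, \mathfrak{L}_q^{\pm 1}$ and the continuity of $\mathcal{F}$ on $\mathcal{S}\left(\mathbb{R}^{n}\right)$; I would only need to check that the asymptotic estimates transform with bounds uniform in the relevant seminorms, which is routine given the explicit constants in Lemmas \ref{lemma 2.5} and \ref{Lemma 4}. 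The main obstacle is the bookkeeping in surjectivity — verifying that the codomain constraints in Definition \ref{Definition b 1} (namely $A_q \in \mathcal{K}_q\left(\mathcal{D}\left(\mathbb{S}\right)\right)$ and $P_q \in \mathcal{P}_q$ for $q \geq 0$) are precisely what is needed for each inverse operator to be applicable and land in $\mathcal{D}\left(\mathbb{S}\right)$, so that the Borel summation step is legitimate; this is exactly where the distinction between $\mathcal{X}_q$ and $\mathcal{X}$ for $q \geq 0$, and the decomposition $\mathcal{X}_q \oplus \mathcal{P}_q = \mathcal{X}$, does the work.
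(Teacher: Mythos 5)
Your proposal fills in considerably more detail than the paper itself provides — the paper simply asserts the theorem follows from the preceding analysis (i.e., the expansion \eqref{b 3}, the definition of $\mathcal{S}_{\ast,\text{reg}}\left(\mathbb{R}^{n}\right)$, and the observation that Fourier transforms of distributions supported at the origin are polynomials) — and your overall route (into-map from the asymptotic expansion, injectivity from $\mathcal{S}^{\prime}$, surjectivity by term-by-term inversion plus a Borel-type summation) is exactly the natural one the paper has in mind.

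There is, however, one genuine imprecision in your surjectivity argument. You choose $a_m$ via $\mathfrak{K}_q^{-1}$ and $\mathfrak{L}_q^{-1}$, form $\psi$, and then assert that $\Phi-\mathcal{F}\left(\mathcal{P}f\left(\psi\right)\right)=\Psi$ lies in $\mathcal{S}\left(\mathbb{R}^{n}\right)$, taking $f_{0}=\mathcal{P}f\left(\psi\right)+\mathcal{F}^{-1}\left(\Psi\right)$. But for $q\geq 0$ the map $a\mapsto\left(\mathcal{K}_{q}\left(a\right),\mathcal{L}_{q}\left(a\right)\right)$ cannot be onto $\mathcal{D}\left(\mathbb{S}\right)\times\mathcal{P}_{q}$: once you fix $\left(a_{m}\right)_{\mathcal{P}_{q}}=\mathfrak{L}_{q}^{-1}\left(P_{q}\right)$ to match $P_{q}$, and $\left(a_{m}\right)_{\mathcal{D}_{q}}=\mathfrak{K}_{q}^{-1}\left(\Pi_{\mathcal{D}_{q}}A_{q}\right)$ to match the polynomial-free part of $A_{q}$, the $\mathcal{P}_{q}$-component of $\mathcal{K}_{q}\left(a_{m}\right)$ is then determined and need not equal the $\mathcal{P}_{q}$-component of $A_{q}$. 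Thus $\Phi-\mathcal{F}\left(\mathcal{P}f\left(\psi\right)\right)$ generically differs from a Schwartz function by a nonzero polynomial, so $\Psi\notin\mathcal{S}\left(\mathbb{R}^{n}\right)$ and $\mathcal{F}^{-1}\left(\Psi\right)\notin\mathcal{S}\left(\mathbb{R}^{n}\right)$. The fix is to write the discrepancy as $P+\Psi_{0}$ with $P$ a polynomial and $\Psi_{0}\in\mathcal{S}\left(\mathbb{R}^{n}\right)$, and take $f_{0}=\mathcal{P}f\left(\psi\right)+g+h$ with $g=\mathcal{F}^{-1}\left(P\right)\in\mathcal{D}_{\left\{\mathbf{0}\right\}}^{\prime}\left(\mathbb{R}^{n}\right)$ and $h=\mathcal{F}^{-1}\left(\Psi_{0}\right)\in\mathcal{S}\left(\mathbb{R}^{n}\right)$; this $f_{0}$ is still a regularization of the thick test function $\psi+h$ and so lies in $\mathcal{S}_{\ast,\text{reg}}\left(\mathbb{R}^{n}\right)$. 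You clearly have the right ingredient in mind when you invoke $\mathcal{X}_{q}\oplus\mathcal{P}_{q}=\mathcal{X}$ at the end, but the construction as written does not deploy it, and the stated membership $\Psi\in\mathcal{S}\left(\mathbb{R}^{n}\right)$ is false in general.

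One further small slip: you twice write ``$A_{q}\in\mathcal{K}_{q}\left(\mathcal{D}\left(\mathbb{S}\right)\right)$'' for the growing range $q\geq 0$, whereas Definition \ref{Definition b 1} requires only $A_{q}\in\mathcal{D}\left(\mathbb{S}\right)$ there; the constraint $A_{q}\in\mathcal{K}_{q}\left(\mathcal{D}\left(\mathbb{S}\right)\right)$ is imposed only for $q\leq -n$. This is exactly the mismatch that makes the polynomial correction above necessary.
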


Notice that we have not defined a topology for the space $\mathcal{S}%
_{\ast,\text{reg}}\left(  \mathbb{R}^{n}\right)  $ yet; once a topology is
introduced, we shall see that the Fourier transform is not only an algebraic
isomorphism, but actually an isomorphism of topological vector spaces. First,
however, we need to consider the notions of delta part and polynomial part of distributions.

\subsection{Delta parts and polynomial
parts\label{Subsection:Delta parts and polynomial parts}}

In general it is not possible to separate the contribution to a distribution
from a given point; to talk about the \textquotedblleft delta part at
$\mathbf{x}_{0}$\textquotedblright\ of \emph{all} distributions does not make
sense. However, \emph{sometimes,} we can actually separate the delta part
\cite{Estrada16}.

\begin{definition}
\label{Def delta part}Let $f_{0}\in\mathcal{D}^{\prime}\left(  \mathbb{R}%
^{n}\setminus\left\{  \mathbf{0}\right\}  \right)  $ be a distribution defined
in the complement of the origin. Suppose the pseudofunction $\mathcal{P}%
f\left(  f_{0}\left(  \mathbf{x}\right)  \right)  $ exists in $\mathcal{D}%
^{\prime}\left(  \mathbb{R}^{n}\right)  $ (respectively in $\mathcal{D}_{\ast
}^{\prime}\left(  \mathbb{R}^{n}\right)  $). Let $f\in\mathcal{D}^{\prime
}\left(  \mathbb{R}^{n}\right)  $ (respectively in $f\in\mathcal{D}_{\ast
}^{\prime}\left(  \mathbb{R}^{n}\right)  $) be any regularization of $f_{0}.$
Then the delta part at $0$ of $f$ is the distribution $f-\mathcal{P}f\left(
f_{0}\left(  \mathbf{x}\right)  \right)  ,$ whose support is the
origin\footnote{Notice that this delta part is in fact a \emph{spherical}
delta part.}.
\end{definition}

It is easy to construct distributions whose delta part is not defined. Indeed,
the function $\sin r^{-k}$ is locally integrable in $\mathbb{R}^{n},$\ and
thus it gives a well defined regular distribution in $\mathcal{D}^{\prime
}\left(  \mathbb{R}^{n}\right)  .$ If $k>n,$ then the distributional
derivative $\left(  \overline{\nabla}_{i}\right)  \sin r^{-k}$ is another well
defined distribution, but its delta part at the origin is \emph{not} defined,
since $\mathcal{P}f\left(  \nabla_{i}\sin r^{-k}\right)  $ does not exist.

It must be emphasized that even though when it exists $\mathcal{P}f\left(
f_{0}\left(  \mathbf{x}\right)  \right)  $ is in a way \emph{the} natural
regularization of $f_{0},$ other regularizations appear also very naturally,
as we now illustrate. Consider the distribution $\mathcal{P}f\left(
r^{-k}\right)  $ in $\mathbb{R}^{n}.$ Then the distributional derivative
$\overline{\nabla}_{i}\mathcal{P}f\left(  r^{-k}\right)  $ is a regularization
of $-kx_{i}r^{-k-2},$ the ordinary derivative of $r^{-k};$ however
\cite[(3.16)]{EK89} if $k-n=2m$ is an even positive integer, then%
\begin{equation}
\overline{\nabla}_{i}\mathcal{P}f\left(  r^{-k}\right)  =\mathcal{P}f\left(
-kx_{i}r^{-k-2}\right)  -\frac{c_{m,n}}{\left(  2m\right)  !k}\nabla_{i}%
\Delta^{m}\delta\left(  \mathbf{x}\right)  \,, \label{del 1}%
\end{equation}
where $c_{m,n}$ is given by (\ref{ce}). Therefore, $\left(  -c_{m,n}/\left(
2m\right)  !k\right)  \nabla_{i}\Delta^{m}\delta\left(  \mathbf{x}\right)  $
is the delta part of the distribution $\overline{\nabla}_{i}\mathcal{P}%
f\left(  r^{-k}\right)  $ in $\mathcal{D}^{\prime}\left(  \mathbb{R}%
^{n}\right)  .$ In the space $\mathcal{D}_{\ast}^{\prime}\left(
\mathbb{R}^{n}\right)  ,$ now for any integer $k\in\mathbb{Z},$ the delta part
of the thick derivative $\nabla_{i}^{\ast}\mathcal{P}f\left(  r^{-k}\right)  $
is given \cite[Thm.7.1]{YE2} as $Cn_{i}\delta_{\ast}^{\left[  -k-n+1\right]
}.$\footnote{Naturally, when $k-n=2m\geq0,$ the projection of the thick delta
part is precisely the distributional delta part, and this agrees with
\cite[(7.7)]{YE2}.}

Another illustration of \textquotedblleft natural\textquotedblright%
\ regularizations that differ from the finite part is the following. In
$\mathbb{R}^{n}$ for $n\geq2,$ and for $m\in\mathbb{N},$ the distribution
$\lambda^{n+2m}\mathcal{P}f\left(  \left\vert \lambda\mathbf{x}\right\vert
^{-n-2m}\right)  $ is a regularization of $r^{-n-2m}$ and in $\mathcal{D}%
^{\prime}\left(  \mathbb{R}^{n}\right)  $\ its delta part is $\ln
\lambda\,c_{m,n}\mathbf{\nabla}^{2m}\delta\left(  \mathbf{x}\right)  /\left(
2m\right)  !$, while in $\mathcal{D}_{\ast}^{\prime}\left(  \mathbb{R}%
^{n}\right)  $ its delta part is $\ln\lambda\,C\delta_{\ast}^{\left[
2m\right]  },$ as follows from \cite[(5.13), (5.14)]{YE2}. More generally
\cite{Estrada18c} let $A$ be a non singular $n\times n$\ matrix, let
$b\in\mathcal{D}\left(  \mathbb{S}\right)  ,$ and put $B_{\alpha}\left(
\mathbf{z}\right)  =b\left(  \mathbf{z/}\left\vert \mathbf{z}\right\vert
\right)  \left\vert \mathbf{z}\right\vert ^{\alpha},$ the extension to
$\mathcal{D}^{\prime}\left(  \mathbb{R}^{n}\setminus\left\{  \mathbf{0}%
\right\}  \right)  $ that is homogeneous of degree $\alpha.$ Then in
$\mathcal{D}_{\ast}^{\prime}\left(  \mathbb{R}^{n}\right)  $ we have
$\mathcal{P}f\left\{  B_{\alpha}\left(  A\mathbf{z}\right)  ;\mathbf{x}%
\right\}  =\mathcal{P}f\left\{  B_{\alpha}\left(  \mathbf{z}\right)
;A\mathbf{x}\right\}  $ if $\alpha\notin\mathbb{Z},$ but if $\alpha
=k\in\mathbb{Z},$%
\begin{equation}
\mathcal{P}f\left\{  B_{k}\left(  A\mathbf{z}\right)  ;\mathbf{x}\right\}
=\mathcal{P}f\left\{  B_{k}\left(  \mathbf{z}\right)  ;A\mathbf{x}\right\}
-Cb\left(  \mathbf{w}\right)  \left\vert A\mathbf{w}\right\vert ^{k}%
\ln\left\vert A\mathbf{w}\right\vert \delta_{\ast}^{\left[  -k-n\right]
}\left(  \mathbf{x}\right)  \,, \label{FP 8}%
\end{equation}
so that the distribution $\mathcal{P}f\left\{  B_{k}\left(  \mathbf{z}\right)
;A\mathbf{x}\right\}  $ has a non trivial delta part while $\mathcal{P}%
f\left\{  B_{k}\left(  \mathbf{z}\right)  ;\mathbf{x}\right\}  $ does not.

When $f_{0}$ is a smooth function defined in $\mathbb{R}^{n}\setminus\left\{
\mathbf{0}\right\}  $ such that the Hadamard regularization exists at the
origin, and $f\in\mathcal{D}^{\prime}\left(  \mathbb{R}^{n}\right)  $ is a
regularization of $f_{0},$ then we call $f_{0}$ the \emph{ordinary part} of
$f.$ Thus, for instance, $-kx_{i}r^{-k-2}$ is the ordinary part of $\left(
\overline{\nabla}_{i}\right)  \mathcal{P}f\left(  r^{-k}\right)  .$

In a similar fashion, one may consider the \emph{polynomial part} of
distributions. Not all distributions have a well defined polynomial part, but
all the elements of $\mathcal{W}_{\mathrm{pre}}\left(  \mathbb{R}^{n}\right)
$ do. Let us start with the case of a distribution that is homogeneous of
degree $q\geq0$ in $\mathbb{R}^{n}\setminus\left\{  \mathbf{0}\right\}  ,$
that is $F_{q}\left(  \mathbf{u}\right)  =A_{q}\left(  \mathbf{v}\right)
s^{q},$ $\mathbf{u}=s\mathbf{v}$ being polar coordinates and $A_{q}%
\in\mathcal{D}^{\prime}\left(  \mathbb{S}\right)  .$ Then we can write $A_{q}$
in terms of spherical harmonics as $A_{q}\left(  \mathbf{v}\right)
=\sum_{m=0}^{\infty}\mathsf{Y}_{m,q}\left(  \mathbf{v}\right)  \,,$ where
$\mathsf{Y}_{m,q}\in\mathcal{H}_{m}.$ Therefore%
\begin{equation}
F_{q}\left(  \mathbf{u}\right)  =E_{q}\left(  \mathbf{u}\right)
+\widetilde{F}_{q}\left(  \mathbf{u}\right)  \,, \label{delta 6}%
\end{equation}
where $E_{q}=\Pi_{\text{pol}}\left(  F_{q}\right)  $ is the homogeneous
polynomial of degree $q$ given as%
\begin{equation}
E_{q}\left(  \mathbf{u}\right)  =\Pi_{\text{pol}}\left(  F_{q}\right)
=(\sum_{k\leq q/2}\mathsf{Y}_{q-2k,q}\left(  \mathbf{v}\right)  )s^{q},
\label{delta 7}%
\end{equation}
and $\widetilde{F}_{q}=F_{q}-E_{q}$ is the polynomial free part of $F_{q}.$

In the general case when $F$ has the asymptotic expansion of the form%
\begin{equation}
F\left(  s\mathbf{v}\right)  \sim\sum_{q=0}^{Q}\left(  A_{q}\left(
\mathbf{v}\right)  +P_{q}\left(  \mathbf{v}\right)  \ln s\right)  s^{q}%
+\sum_{q=1}^{\infty}A_{-q}\left(  \mathbf{v}\right)  s^{-q}, \label{delta 8}%
\end{equation}
then the \emph{polynomial part} of $F$ is the polynomial%
\begin{equation}
\Pi_{\text{pol}}\left(  F\right)  =\sum_{q=0}^{Q}\Pi_{\text{pol}}\left(
A_{q}\left(  \mathbf{v}\right)  s^{q}\right)  \,. \label{delta 9}%
\end{equation}
The \emph{polynomial free part} of $F$ is $F-\Pi_{\text{pol}}\left(  F\right)
.$

It is possible to define the polynomial part of other distributions, not just
those with an asymptotic expansion of the form (\ref{delta 8}), but this
construction is enough for our purposes, since it gives the polynomial part in
$\mathcal{W}_{\mathrm{pre}}\left(  \mathbb{R}^{n}\right)  .$ It should also be
noticed that the polynomial part we constructed is a \emph{radial} polynomial
part, since we have employed polar coordinates.

The polynomial part allows us to understand why $\mathcal{K}_{q}\left(
\mathcal{D}\left(  \mathbb{S}\right)  \right)  =\mathcal{D}_{q}$ and
$\mathcal{K}_{q}\left(  \mathcal{D}^{\prime}\left(  \mathbb{S}\right)
\right)  =\mathcal{D}_{q}^{\prime}$\ for $q<-n.$ In fact we have the following.

\begin{lemma}
\label{Lemma delta}Let $A\in\mathcal{D}\left(  \mathbb{S}\right)  .$ If
$m\in\mathbb{N},$ then $A\in\mathcal{K}_{-\left(  n+m\right)  }\left(
\mathcal{D}\left(  \mathbb{S}\right)  \right)  $ if and only if the function
$A\left(  \mathbf{v}\right)  s^{m}$ is polynomial free. Similarly, if
$A\in\mathcal{D}^{\prime}\left(  \mathbb{S}\right)  ,$ then $A\in
\mathcal{K}_{-\left(  n+m\right)  }\left(  \mathcal{D}^{\prime}\left(
\mathbb{S}\right)  \right)  $ if and only if the distribution $A\left(
\mathbf{v}\right)  s^{m}$ is polynomial free.
\end{lemma}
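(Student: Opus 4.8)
The plan is to diagonalize the operator $\mathcal{K}_{-(n+m)}$ in the spherical harmonic basis and then to compare the degrees it annihilates with the degrees that make up the radial polynomial part. First I would write $A=\sum_{j=0}^{\infty}\mathsf{Y}_{j}$ with $\mathsf{Y}_{j}\in\mathcal{H}_{j}$, the series converging in $\mathcal{D}(\mathbb{S})$ (resp. in $\mathcal{D}^{\prime}(\mathbb{S})$). Since $\mathsf{Z}_{j}$ is the reproducing kernel of $\mathcal{H}_{j}$, the expansion (\ref{a.13p}) of $K_{\beta}$ shows that $\mathcal{K}_{\beta}$ acts on the degree-$j$ component by multiplication by $\kappa_{\beta,j}$, that is, $\mathcal{K}_{\beta}\{\sum_{j}\mathsf{W}_{j}\}=\sum_{j=0}^{\infty}\kappa_{\beta,j}\mathsf{W}_{j}$. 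Putting $\beta=-(n+m)$ in (\ref{a.13p}) gives
\begin{equation*}
\kappa_{-(n+m),j}=\frac{i^{j}\pi^{n/2}\,2^{n+m}\,\Gamma\!\left(\frac{j+n+m}{2}\right)}{\Gamma\!\left(\frac{j-m}{2}\right)}\,,
\end{equation*}
and since the Gamma function has no zeros and its only poles are the non-positive integers, while $(j+n+m)/2>0$ always, one obtains that $\kappa_{-(n+m),j}=0$ precisely when $(j-m)/2$ is a non-positive integer, that is, precisely for $j\in\{m,m-2,m-4,\ldots\}$, and $\kappa_{-(n+m),j}\neq0$ for all other $j$.

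Next I would match this index set with the polynomial part. By (\ref{delta 6})--(\ref{delta 7}) the homogeneous function $A(\mathbf{v})s^{m}$ has polynomial part $\bigl(\sum_{k\leq m/2}\mathsf{Y}_{m-2k}(\mathbf{v})\bigr)s^{m}$, and since the spherical harmonics $\mathsf{Y}_{m},\mathsf{Y}_{m-2},\ldots$ lie in pairwise distinct, hence linearly independent, subspaces $\mathcal{H}_{m},\mathcal{H}_{m-2},\ldots$, the function $A(\mathbf{v})s^{m}$ is polynomial free if and only if $\mathsf{Y}_{j}=0$ for every $j\in\{m,m-2,m-4,\ldots\}$; this is exactly the set of $j$ on which $\kappa_{-(n+m),j}$ vanishes. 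The forward implication is then immediate: if $A=\mathcal{K}_{-(n+m)}\{b\}$ with $b=\sum_{j}\mathsf{W}_{j}$, the degree-$j$ component of $A$ is $\mathsf{Y}_{j}=\kappa_{-(n+m),j}\mathsf{W}_{j}$, which vanishes whenever $j\in\{m,m-2,\ldots\}$, so $A(\mathbf{v})s^{m}$ is polynomial free, and the same argument applies for $A\in\mathcal{D}^{\prime}(\mathbb{S})$.

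For the converse, assuming $A(\mathbf{v})s^{m}$ is polynomial free, so $\mathsf{Y}_{j}=0$ for $j\in\{m,m-2,\ldots\}$, I would set $b:=\sum_{j\notin\{m,m-2,\ldots\}}\kappa_{-(n+m),j}^{-1}\mathsf{Y}_{j}$, which by the diagonal action satisfies $\mathcal{K}_{-(n+m)}\{b\}=A$; the only thing left is to check that $b$ again lies in $\mathcal{D}(\mathbb{S})$ (resp. in $\mathcal{D}^{\prime}(\mathbb{S})$), and I expect this to be the main obstacle. It follows because, by Stirling's formula, $|\kappa_{-(n+m),j}|$ grows like a fixed positive power of $j$ as $j\to\infty$, so the sequence of factors $\kappa_{-(n+m),j}^{-1}$ is bounded; since membership in $\mathcal{D}(\mathbb{S})$ (resp. $\mathcal{D}^{\prime}(\mathbb{S})$) is characterized by rapid decay (resp. at most polynomial growth) of the $L^{2}(\mathbb{S})$-norms of the spherical harmonic components, multiplying these components by a bounded factor keeps $b$ in the required space, and hence $A\in\mathcal{K}_{-(n+m)}(\mathcal{D}(\mathbb{S}))$, resp. $A\in\mathcal{K}_{-(n+m)}(\mathcal{D}^{\prime}(\mathbb{S}))$. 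Alternatively, this growth estimate can be bypassed: the hypothesis says precisely that $A$ lies in the subspace $\mathcal{X}_{-(n+m)}=\mathcal{X}_{m}$, with $\mathcal{X}=\mathcal{D}(\mathbb{S})$ or $\mathcal{D}^{\prime}(\mathbb{S})$, on which $\mathfrak{K}_{-(n+m)}$, coinciding there with $\mathcal{K}_{-(n+m)}$, is an isomorphism by Proposition \ref{Propposition kq}, so the preimage $b$ exists in $\mathcal{X}$ automatically.
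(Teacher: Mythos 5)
Your proof is correct and takes essentially the same route as the paper: the paper's proof is the single line ``Follows at once from the Propositions \ref{Lemma 5} and \ref{Lemma 6},'' meaning precisely that the formula (\ref{a.13p}) for $\kappa_{\beta,m}$ shows $\mathcal{K}_{-(n+m)}$ acts diagonally in the spherical harmonic basis and annihilates exactly the degrees $m,m-2,\ldots$, which are the degrees making up the radial polynomial part of $A(\mathbf{v})s^{m}$. You have spelled this out, and in addition supplied the surjectivity step (via the Stirling growth estimate on $\kappa_{-(n+m),j}$ and the $L^{2}$-characterization of $C^{\infty}(\mathbb{S})$ and $\mathcal{D}^{\prime}(\mathbb{S})$, or alternatively via Proposition \ref{Propposition kq}) that the paper leaves tacit.
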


\begin{proof}
Follows at once from the Propositions \ref{Lemma 5} and \ref{Lemma 6}.
\end{proof}

\subsection{The space $\mathcal{W}\left(  \mathbb{R}^{n}\right)
$\label{Subsection: The space W}}

We can now consider the topology of the spaces $\mathcal{W}_{\mathrm{pre}%
}\left(  \mathbb{R}^{n}\right)  $ and $\mathcal{S}_{\ast,\text{reg}}\left(
\mathbb{R}^{n}\right)  ,$ as well as define the space $\mathcal{W}\left(
\mathbb{R}^{n}\right)  .$

The space $\mathcal{S}_{\ast,\text{reg}}\left(  \mathbb{R}^{n}\right)  $
admits the representation%
\begin{equation}
\mathcal{S}_{\ast,\text{reg}}\left(  \mathbb{R}^{n}\right)  =\mathcal{S}%
_{\ast,\text{ord}}\left(  \mathbb{R}^{n}\right)  \oplus\mathcal{D}_{\left\{
\mathbf{0}\right\}  }^{\prime}\left(  \mathbb{R}^{n}\right)  \,, \label{W 1}%
\end{equation}
where $\mathcal{D}_{\left\{  \mathbf{0}\right\}  }^{\prime}\left(
\mathbb{R}^{n}\right)  $ is the space of distributions with support at the
origin and where $\mathcal{S}_{\ast,\text{ord}}\left(  \mathbb{R}^{n}\right)
$\ is the space of ordinary parts of regularizations of thick test functions.
Clearly the $\mathcal{P}f$ operator is an isomorphism of $\mathcal{S}_{\ast
}\left(  \mathbb{R}^{n}\right)  $ onto $\mathcal{S}_{\ast,\text{ord}}\left(
\mathbb{R}^{n}\right)  .$ We define the topology of $\mathcal{S}%
_{\ast,\text{ord}}\left(  \mathbb{R}^{n}\right)  $ by asking $\mathcal{P}f$ to
be a topological isomorphism. The space $\mathcal{D}_{\left\{  \mathbf{0}%
\right\}  }^{\prime}\left(  \mathbb{R}^{n}\right)  $ has a topology as a
closed subspace of $\mathcal{S}^{\prime}\left(  \mathbb{R}^{n}\right)  .$ The
topology of $\mathcal{S}_{\ast,\text{reg}}\left(  \mathbb{R}^{n}\right)  $ is
the direct sum topology. Notice that the topology of $\mathcal{S}%
_{\ast,\text{reg}}\left(  \mathbb{R}^{n}\right)  $ is stronger but not equal
to the subspace topology inherited from $\mathcal{S}^{\prime}\left(
\mathbb{R}^{n}\right)  .$ We can now complete the Theorem \ref{Theorem b 1}%
:\ \textsl{The Fourier transform is a topological isomorphism of the spaces}
$\mathcal{S}_{\ast,\text{reg}}\left(  \mathbb{R}^{n}\right)  $ \textsl{and}
$\mathcal{W}_{\mathrm{pre}}\left(  \mathbb{R}^{n}\right)  .$

We now define the space $\mathcal{W}\left(  \mathbb{R}^{n}\right)  .$

\begin{definition}
\label{Definition W 1}The space $\mathcal{W}\left(  \mathbb{R}^{n}\right)  $
is formed by the polynomial free elements of $\mathcal{W}_{\mathrm{pre}%
}\left(  \mathbb{R}^{n}\right)  ,$ with the subspace topology. Explicitly,
$\Phi\in\mathcal{W}$ if it is smooth in $\mathbb{R}^{n}$ and at infinity it
has an asymptotic expansion%
\begin{equation}
\Phi\left(  s\mathbf{v}\right)  \sim\sum_{q=0}^{Q}\left(  A_{q}\left(
\mathbf{v}\right)  +P_{q}\left(  \mathbf{v}\right)  \ln s\right)  s^{q}%
+\sum_{q=1}^{\infty}A_{-q}\left(  \mathbf{v}\right)  s^{-q}, \label{Wdef}%
\end{equation}
where $A_{q}\in\mathcal{D}_{q}$ for $q\in\mathbb{Z}$ and the $P_{q}%
\in\mathcal{P}_{q}$ are homogeneous polynomials of degree $q.$
\end{definition}

The space $\mathcal{W}\left(  \mathbb{R}^{n}\right)  $\ is exactly the space
needed to define the Fourier transform of thick test functions; the condition
$A_{q}\in\mathcal{D}_{q}$ in the expansion (\ref{Wdef}), which\ is equivalent
to the fact that $\Phi$ is polynomial free, will play a very important role in
the behavior of the Fourier transform of thick distributions. Notice in fact
that
\begin{equation}
\mathcal{W}_{\mathrm{pre}}\left(  \mathbb{R}^{n}\right)  =\mathcal{W}\left(
\mathbb{R}^{n}\right)  \oplus\mathcal{P}\left(  \mathbb{R}^{n}\right)
\,,\label{W 2}%
\end{equation}
as topological vector spaces. Therefore the space $\mathcal{W}\left(
\mathbb{R}^{n}\right)  $ can also be constructed as a quotient space. Namely,
if we define the equivalence relation $F\sim G$ if $F-G$ is a polynomial, then
$\mathcal{W}\left(  \mathbb{R}^{n}\right)  $ is canonically isomorphic to
$\mathcal{W}_{\mathrm{pre}}\left(  \mathbb{R}^{n}\right)  /\sim.$ Similarly,
if we consider the equivalence relation $f\sim g$ when $\operatorname*{supp}%
\left(  f-g\right)  \subset\left\{  \mathbf{0}\right\}  $ in $\mathcal{S}%
_{\ast,\text{reg}}\left(  \mathbb{R}^{n}\right)  ,$ then $\mathcal{S}_{\ast
}\left(  \mathbb{R}^{n}\right)  \simeq\mathcal{S}_{\ast,\text{ord}}\left(
\mathbb{R}^{n}\right)  \simeq\mathcal{S}_{\ast,\text{reg}}\left(
\mathbb{R}^{n}\right)  /\sim.\smallskip$

When $\phi\in\mathcal{S}_{\ast}\left(  \mathbb{R}^{n}\right)  $ we shall
denote by $\mathcal{F}_{\ast,\text{t}}\left(  \phi\right)  $ the element
$\Pi_{\mathcal{W}_{\text{pre}},\mathcal{W}}\left(  \mathcal{F}\left(
\mathcal{P}f\left(  \phi\right)  \right)  \right)  $\footnote{In general
$\mathcal{F}\left(  \mathcal{P}f\left(  \phi\right)  \right)  $ does not
belong to $\mathcal{W}.$} of $\mathcal{W}\left(  \mathbb{R}^{n}\right)  ,$ and
call it the thick Fourier transform of $\phi.$ We can also define a Fourier
transform in $\mathcal{W}\left(  \mathbb{R}^{n}\right)  ,$ $\mathcal{F}%
_{\text{t}}^{\ast}:\mathcal{W}\left(  \mathbb{R}^{n}\right)  \longrightarrow
\mathcal{S}_{\ast}\left(  \mathbb{R}^{n}\right)  ,$ as%
\begin{equation}
\mathcal{F}_{\text{t}}^{\ast}\left\{  \Phi\left(  \mathbf{u}\right)
;\mathbf{x}\right\}  =\left(  2\pi\right)  ^{n}\mathcal{F}_{\ast,\text{t}%
}^{-1}\left\{  \Phi\left(  \mathbf{u}\right)  ;-\mathbf{x}\right\}  \,.
\label{W 3}%
\end{equation}
We immediately obtain the following important result.

\begin{theorem}
\label{Theorem W 1}The thick Fourier transform $\mathcal{F}_{\ast,\text{t}}$
is a topological isomorphism of $\mathcal{S}_{\ast}\left(  \mathbb{R}%
^{n}\right)  $ onto $\mathcal{W}\left(  \mathbb{R}^{n}\right)  .$ The thick
Fourier transform $\mathcal{F}_{\text{t}}^{\ast}$ is a topological isomorphism
of $\mathcal{W}\left(  \mathbb{R}^{n}\right)  $ onto $\mathcal{S}_{\ast
}\left(  \mathbb{R}^{n}\right)  .$
\end{theorem}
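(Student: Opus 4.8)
The plan is to deduce this from the completed form of Theorem \ref{Theorem b 1}, namely that $\mathcal{F}\colon\mathcal{S}_{\ast,\text{reg}}\left(\mathbb{R}^{n}\right)\to\mathcal{W}_{\mathrm{pre}}\left(\mathbb{R}^{n}\right)$ is a topological isomorphism, together with the topological direct sum decompositions (\ref{W 1}) and (\ref{W 2}). First I would record the elementary fact that the Fourier transform sends the distributions supported at the origin onto the polynomials: the Fourier transform of any derivative of the Dirac delta at the origin is, up to a multiplicative constant, a monomial, so $\mathcal{F}\left(\mathcal{D}_{\left\{\mathbf{0}\right\}}^{\prime}\left(\mathbb{R}^{n}\right)\right)=\mathcal{P}\left(\mathbb{R}^{n}\right)$ as vector spaces. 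Since $\mathcal{D}_{\left\{\mathbf{0}\right\}}^{\prime}\left(\mathbb{R}^{n}\right)$ is a topologically complemented subspace of $\mathcal{S}_{\ast,\text{reg}}\left(\mathbb{R}^{n}\right)$ by (\ref{W 1}) and $\mathcal{F}$ is a topological isomorphism, its restriction is a topological isomorphism of $\mathcal{D}_{\left\{\mathbf{0}\right\}}^{\prime}\left(\mathbb{R}^{n}\right)$ onto $\mathcal{P}\left(\mathbb{R}^{n}\right)$ equipped with the topology induced from $\mathcal{W}_{\mathrm{pre}}\left(\mathbb{R}^{n}\right)$, which by (\ref{W 2}) is exactly the one making $\mathcal{P}\left(\mathbb{R}^{n}\right)$ a topological direct summand.

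Next I would apply $\mathcal{F}$ to the decomposition (\ref{W 1}). Because $\mathcal{F}$ is a topological isomorphism sending the summand $\mathcal{D}_{\left\{\mathbf{0}\right\}}^{\prime}\left(\mathbb{R}^{n}\right)$ onto $\mathcal{P}\left(\mathbb{R}^{n}\right)$, it carries $\mathcal{S}_{\ast,\text{ord}}\left(\mathbb{R}^{n}\right)$ onto a closed topological complement of $\mathcal{P}\left(\mathbb{R}^{n}\right)$ in $\mathcal{W}_{\mathrm{pre}}\left(\mathbb{R}^{n}\right)$, and it restricts to a topological isomorphism of $\mathcal{S}_{\ast,\text{ord}}\left(\mathbb{R}^{n}\right)$ onto $\mathcal{F}\left(\mathcal{S}_{\ast,\text{ord}}\left(\mathbb{R}^{n}\right)\right)$. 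Now $\mathcal{W}\left(\mathbb{R}^{n}\right)$ is, by (\ref{W 2}), another closed topological complement of the \emph{same} subspace $\mathcal{P}\left(\mathbb{R}^{n}\right)$ in $\mathcal{W}_{\mathrm{pre}}\left(\mathbb{R}^{n}\right)$, and for any two closed topological complements of a fixed subspace the canonical map from one onto the other — which here is precisely the restriction of the projection $\Pi_{\mathcal{W}_{\text{pre}},\mathcal{W}}$ — is a topological isomorphism. Hence $\Pi_{\mathcal{W}_{\text{pre}},\mathcal{W}}$ restricts to a topological isomorphism of $\mathcal{F}\left(\mathcal{S}_{\ast,\text{ord}}\left(\mathbb{R}^{n}\right)\right)$ onto $\mathcal{W}\left(\mathbb{R}^{n}\right)$.

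Finally, recalling that $\mathcal{P}f\colon\mathcal{S}_{\ast}\left(\mathbb{R}^{n}\right)\to\mathcal{S}_{\ast,\text{ord}}\left(\mathbb{R}^{n}\right)$ is a topological isomorphism by the very definition of the topology of $\mathcal{S}_{\ast,\text{ord}}\left(\mathbb{R}^{n}\right)$, and that $\mathcal{F}_{\ast,\text{t}}\left(\phi\right)=\Pi_{\mathcal{W}_{\text{pre}},\mathcal{W}}\left(\mathcal{F}\left(\mathcal{P}f\left(\phi\right)\right)\right)$, I would conclude that
\[
\mathcal{F}_{\ast,\text{t}}=\Pi_{\mathcal{W}_{\text{pre}},\mathcal{W}}\circ\mathcal{F}\circ\mathcal{P}f
\]
is a composition of three topological isomorphisms, hence a topological isomorphism of $\mathcal{S}_{\ast}\left(\mathbb{R}^{n}\right)$ onto $\mathcal{W}\left(\mathbb{R}^{n}\right)$. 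The second assertion follows at once from (\ref{W 3}): $\mathcal{F}_{\text{t}}^{\ast}$ is obtained from $\mathcal{F}_{\ast,\text{t}}^{-1}$ by composing with the reflection $\mathbf{x}\mapsto-\mathbf{x}$ and the dilation by $\left(2\pi\right)^{n}$, both of which are topological automorphisms of $\mathcal{S}_{\ast}\left(\mathbb{R}^{n}\right)$, so $\mathcal{F}_{\text{t}}^{\ast}$ is a topological isomorphism of $\mathcal{W}\left(\mathbb{R}^{n}\right)$ onto $\mathcal{S}_{\ast}\left(\mathbb{R}^{n}\right)$.

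The only point needing genuine attention is the matching of the two direct sum decompositions carried out in the first paragraph — that $\mathcal{F}$ pairs the summand of origin-supported distributions with the summand of polynomials and does so with compatible topologies; once that is in hand the remainder is formal bookkeeping with complemented subspaces, and the analytic substance (the expansion (\ref{b 3}), the isomorphism of Theorem \ref{Theorem b 1}, and the construction of the topologies) has already been supplied.
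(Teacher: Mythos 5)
Your proposal is correct and follows the paper's own route: the paper states Theorem \ref{Theorem W 1} as an immediate consequence of the completed form of Theorem \ref{Theorem b 1} (that $\mathcal{F}$ is a topological isomorphism of $\mathcal{S}_{\ast,\text{reg}}\left(\mathbb{R}^{n}\right)$ onto $\mathcal{W}_{\mathrm{pre}}\left(\mathbb{R}^{n}\right)$), the direct sum decompositions (\ref{W 1}) and (\ref{W 2}), and the definition $\mathcal{F}_{\ast,\text{t}}=\Pi_{\mathcal{W}_{\text{pre}},\mathcal{W}}\circ\mathcal{F}\circ\mathcal{P}f$, giving no further argument. What you have written is exactly the careful unpacking of that immediacy — in particular the key matching of the summand $\mathcal{D}_{\left\{\mathbf{0}\right\}}^{\prime}\left(\mathbb{R}^{n}\right)$ with $\mathcal{P}\left(\mathbb{R}^{n}\right)$ under $\mathcal{F}$, followed by the elementary lemma that the projection gives a topological isomorphism between two topological complements of the same closed subspace.
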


\section{The space $\mathcal{W}^{\prime}\left(  \mathbb{R}_{\text{c}}%
^{n}\right)  $\label{Section: The space W'}}

In this section we shall consider the distributions of the space
$\mathcal{W}^{\prime}\left(  \mathbb{R}^{n}\right)  .$ The first thing we
would like to point out is that the functions of $\mathcal{W}\left(
\mathbb{R}^{n}\right)  $ are smooth functions in $\mathbb{R}^{n}$ with a
special type of thick behavior at infinity; therefore the elements of
$\mathcal{W}^{\prime}\left(  \mathbb{R}^{n}\right)  $ are actually
distributions over the space $\mathbb{R}_{\text{c}}^{n}=\mathbb{R}^{n}%
\cup\left\{  \mathbf{\infty}\right\}  ,$ the one point compactification of
$\mathbb{R}^{n}.$ From now on we shall also employ the more informative
notation $\mathcal{W}^{\prime}\left(  \mathbb{R}_{\text{c}}^{n}\right)  $ when
we want to call attention to the dimension $n$ and the simpler notation
$\mathcal{W}^{\prime}$ when no explicit mention of $n$ is needed. The elements
of $\mathcal{W}^{\prime}$ shall be called $sl-$\emph{thick distributions,}
since the thick test functions of $\mathcal{W}$ have a special type of
logarithmic expansion at infinity.

Several distributions defined on $\mathbb{R}^{n}$ admit canonical extensions
to $\mathcal{W}^{\prime}\left(  \mathbb{R}_{\text{c}}^{n}\right)  .$ Indeed,
if $\mathcal{W}\left(  \mathbb{R}^{n}\right)  \subset\mathcal{A}\left(
\mathbb{R}^{n}\right)  $ continuously and with dense image, where
$\mathcal{A}\left(  \mathbb{R}^{n}\right)  $ is a space of test functions,
then $\mathcal{A}^{\prime}\left(  \mathbb{R}^{n}\right)  $ is canonically
imbedded into $\mathcal{W}^{\prime}\left(  \mathbb{R}_{\text{c}}^{n}\right)
.$ The simplest case is when $\mathcal{A}\left(  \mathbb{R}^{n}\right)
=\mathcal{E}\left(  \mathbb{R}^{n}\right)  ,$ the space of all smooth
functions in $\mathbb{R}^{n},$ which gives that each distribution of compact
support, $f\in\mathcal{E}^{\prime}\left(  \mathbb{R}^{n}\right)  $ admits a
canonical extension to $\mathcal{W}^{\prime}\left(  \mathbb{R}_{\text{c}}%
^{n}\right)  ,$ namely one whose support in $\mathbb{R}_{\text{c}}^{n}$ is
precisely the original support of $f,$%
\begin{equation}
\left\langle f,\Phi\right\rangle _{\mathcal{W}^{\prime}\times\mathcal{W}%
}=\left\langle f,\Phi\right\rangle _{\mathcal{E}^{\prime}\times\mathcal{E}}\,.
\label{Wp 0.1}%
\end{equation}
Actually we can also take $\mathcal{A}\left(  \mathbb{R}^{n}\right)
=\mathcal{K}\left(  \mathbb{R}^{n}\right)  ,$ so that any distribution
$f\in\mathcal{K}^{\prime}\left(  \mathbb{R}^{n}\right)  $ admits a canonical
extension to $\mathcal{W}^{\prime}\left(  \mathbb{R}_{\text{c}}^{n}\right)  ,$
given by the Ces\`{a}ro evaluation,%
\begin{equation}
\left\langle f,\Phi\right\rangle _{\mathcal{W}^{\prime}\times\mathcal{W}%
}=\left\langle f,\Phi\right\rangle \ \ \ \left(  \text{C}\right)  \label{Wp 1}%
\end{equation}
since $\left\langle f,\Phi\right\rangle $ $\left(  \text{C}\right)  $ exists
whenever $\Phi\in\mathcal{K}\left(  \mathbb{R}^{n}\right)  $ \cite{GreenBook}%
\ and $\mathcal{W}\left(  \mathbb{R}^{n}\right)  \subset\mathcal{K}\left(
\mathbb{R}^{n}\right)  .$ We shall employ the same notation for both the
distribution of $\mathcal{K}^{\prime}\left(  \mathbb{R}^{n}\right)  $ and its
canonical extension to $\mathcal{W}^{\prime}\left(  \mathbb{R}_{\text{c}}%
^{n}\right)  .$ On the other hand, $\mathcal{W}\left(  \mathbb{R}^{n}\right)
$ is not contained in $\mathcal{S}\left(  \mathbb{R}^{n}\right)  ,$ and this
means that tempered distributions do not have \emph{canonical} extensions in
$\mathcal{W}^{\prime}\left(  \mathbb{R}_{\text{c}}^{n}\right)  .$ In fact, it
is not hard to see that actually all elements of $\mathcal{S}^{\prime}\left(
\mathbb{R}^{n}\right)  $ have\emph{ many} extensions to $\mathcal{W}^{\prime
}\left(  \mathbb{R}_{\text{c}}^{n}\right)  ,$ but it is not possible to
construct a continuous extension procedure, similarly to the situation
explained in \cite{E2003}.

Another important class of $sl-$thick distributions are the thick deltas at infinity.

\begin{definition}
\label{Def Wp 1}If $G\in\mathcal{D}_{q}^{\prime}$ then we define $G\left(
\mathbf{v}\right)  \delta_{\infty}^{\left[  q\right]  },$ the thick delta at
infinity of order $q$ as
\begin{equation}
\left\langle G\left(  \mathbf{v}\right)  \delta_{\infty}^{\left[  q\right]
},\Phi\right\rangle _{\mathcal{W}^{\prime}\times\mathcal{W}}=\frac{1}%
{C}\left\langle G,A_{q}\right\rangle _{\mathcal{D}_{q}^{\prime}\times
\mathcal{D}_{q}}\,, \label{Wp 3}%
\end{equation}
if $\Phi\in\mathcal{W}$ has the asymptotic expansion (\ref{Wdef}). Similarly,
if $H\in\mathcal{P}_{q}^{\prime}=\mathcal{P}_{q}$ then we define $H\left(
\mathbf{v}\right)  \delta_{\ln,\infty}^{\left[  q\right]  }$\ the thick
logarithmic delta of order $q$ at infinity as%
\begin{equation}
\left\langle H\left(  \mathbf{v}\right)  \delta_{\ln,\infty}^{\left[
q\right]  },\Phi\right\rangle _{\mathcal{W}^{\prime}\times\mathcal{W}}%
=\frac{1}{C}\left\langle H,P_{q}\right\rangle _{\mathcal{P}_{q}^{\prime}%
\times\mathcal{P}_{q}}\,. \label{Wp 4}%
\end{equation}

\end{definition}

Sometimes one may construct extensions of a tempered distribution $g$ by
considering a finite part at infinity\footnote{Clearly the finite part at
infinity does \emph{not exist} for all $g\in\mathcal{S}^{\prime}\left(
\mathbb{R}^{n}\right)  .$}, a construction we shall now denote as
$\mathcal{P}f_{\mathcal{W}}\left(  g\right)  ,$ or later simply as
$\mathcal{P}f\left(  g\right)  $ if there is no danger of confusion. Consider
for example the distribution $\mathcal{P}f\left(  s^{\lambda}\right)  ,$
$s=\left\vert \mathbf{u}\right\vert ,$ of $\mathcal{S}^{\prime}\left(
\mathbb{R}^{n}\right)  :$ this tempered distribution yields the $sl-$thick
distribution $\mathcal{P}f_{\mathcal{W}}\left(  s^{\lambda}\right)  $ obtained
from the generally divergent integral $\int_{\mathbb{R}^{n}}s^{\lambda}%
\Phi\left(  \mathbf{u}\right)  \,d\mathbf{u},$ $\Phi\in\mathcal{W},$ by taking
the radial finite part at $\mathbf{0},$ or at $\mathbf{\infty},$ or at both.
Using the ideas of the Example \ref{example 1} we can see the structure of
$\mathcal{P}f_{\mathcal{W}}\left(  s^{\lambda}\right)  .$

\begin{lemma}
\label{Lemma Wp}The parametric $sl-$thick distribution $\mathcal{P}%
f_{\mathcal{W}}\left(  s^{\lambda}\right)  $ is a meromorphic function of
$\lambda,$ analytic in the region $\left(  \mathbb{C}\setminus\mathbb{Z}%
\right)  \cup\left\{  0,2,4,\ldots\right\}  ,$ with simple poles at
$\lambda=m,$ $m\in\left\{  -n-1,-n-3,-n-5,\ldots\right\}  \cup\left\{
-1,-2,\ldots,1-n\right\}  \cup\left\{  1,3,5,\ldots\right\}  ,$ the residues
at these poles being%
\begin{equation}
\operatorname*{Res}\nolimits_{\lambda=m}\mathcal{P}f_{\mathcal{W}}\left(
s^{\lambda}\right)  =-C\delta_{\infty}^{\left[  -n-m\right]  }\left(
\mathbf{u}\right)  \,, \label{Wp 5p}%
\end{equation}
and double poles at $\lambda=m,$ $m=-n-2q\in\left\{  -n,-n-2,-n-4,\ldots
\right\}  $ with singular part%
\begin{equation}
\frac{C\delta_{\ln,\infty}^{\left[  2q\right]  }\left(  \mathbf{u}\right)
}{\left(  \lambda-m\right)  ^{2}}+\frac{c_{q,n}\nabla^{2q}\delta\left(
\mathbf{u}\right)  }{\left(  2q\right)  !\left(  \lambda-m\right)  }\,.
\label{Wp 5q}%
\end{equation}
The finite part of $\mathcal{P}f_{\mathcal{W}}\left(  s^{\lambda}\right)  $ at
any pole $\lambda=m$ is precisely $\mathcal{P}f_{\mathcal{W}}\left(
s^{m}\right)  .$
\end{lemma}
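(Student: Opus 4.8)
The plan is to fix $\Phi\in\mathcal{W}\left(\mathbb{R}^{n}\right)$, write $F\left(\lambda\right)=\left\langle \mathcal{P}f_{\mathcal{W}}\left(s^{\lambda}\right),\Phi\right\rangle $ in polar coordinates $\mathbf{u}=s\mathbf{v}$, and separate the contributions coming from $\mathbf{0}$ and from $\mathbf{\infty}$. Pick $\rho\in\mathcal{D}\left(\mathbb{R}^{n}\right)$ with $\rho\equiv1$ near $\mathbf{0}$ and compact support, so that $\mathcal{P}f_{\mathcal{W}}\left(s^{\lambda}\right)=\rho\left(\mathbf{u}\right)\mathcal{P}f\left(s^{\lambda}\right)+\mathrm{F.p.}\left(1-\rho\left(\mathbf{u}\right)\right)s^{\lambda}$, where the first summand is the ordinary pseudofunction $r^{\lambda}$ localized near the origin and the second is smooth off $\operatorname{supp}\rho$, so only a finite part at infinity is needed for it. The first summand carries all the singularities at the origin: its Laurent structure in $\lambda$ is exactly the one recorded in (\ref{4}), namely it is holomorphic away from $\left\{-n,-n-2,-n-4,\ldots\right\}$ and near $\lambda=-n-2q$ it equals $\dfrac{c_{q,n}\nabla^{2q}\delta\left(\mathbf{u}\right)}{\left(2q\right)!\left(\lambda+2q+n\right)}+\rho\left(\mathbf{u}\right)\mathcal{P}f\left(r^{-n-2q}\right)+O\left(\lambda+2q+n\right)$. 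This already produces the simple-pole term $\dfrac{c_{q,n}\nabla^{2q}\delta}{\left(2q\right)!\left(\lambda-m\right)}$ of (\ref{Wp 5q}) and contributes nothing else.

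For the contribution at infinity I would insert the strong expansion (\ref{Wdef}): $\Phi\left(s\mathbf{v}\right)=\sum_{q=0}^{Q}\left(A_{q}\left(\mathbf{v}\right)+P_{q}\left(\mathbf{v}\right)\ln s\right)s^{q}+\sum_{q=1}^{N}A_{-q}\left(\mathbf{v}\right)s^{-q}+R_{N}\left(s,\mathbf{v}\right)$, with $R_{N}=O\left(s^{-N-1}\right)$ uniformly in $\mathbf{v}$, the $R_{N}$ term yielding a function holomorphic for $\operatorname{Re}\lambda<N+1-n$ and hence no poles once $N\to\infty$. Using the elementary identities
\[
\int_{1}^{\infty}s^{z-1}\,\mathrm{d}s=-\frac{1}{z}\,,\qquad\int_{1}^{\infty}s^{z-1}\ln s\,\mathrm{d}s=\frac{1}{z^{2}}\qquad\left(\operatorname{Re}z<0\right)\,,
\]
one sees that the $A_{q}s^{q}$ term gives a simple pole at $\lambda=-n-q$ with residue $-\int_{\mathbb{S}}A_{q}\,\mathrm{d}\sigma$, the $A_{-q}s^{-q}$ term a simple pole at $\lambda=q-n$ with residue $-\int_{\mathbb{S}}A_{-q}\,\mathrm{d}\sigma$, and the $P_{q}s^{q}\ln s$ term a \emph{double} pole at $\lambda=-n-q$ with principal part $\left(\lambda+n+q\right)^{-2}\int_{\mathbb{S}}P_{q}\,\mathrm{d}\sigma$; there is no extra simple-pole correction from the logarithmic term since $1/z^{2}$ has no $1/z$ part. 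The interchange of residue/finite-part extraction with $\int_{\mathbb{S}}$ is legitimate because the expansion (\ref{Wdef}) is uniform in $\mathbf{v}$. By Definition \ref{Def Wp 1} these functionals are precisely $-C\delta_{\infty}^{\left[q\right]}$, $-C\delta_{\infty}^{\left[-q\right]}$ and $C\left(\lambda-m\right)^{-2}\delta_{\ln,\infty}^{\left[q\right]}$.

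The last step is to decide which contributions actually survive, using the membership conditions $A_{q}\in\mathcal{D}_{q}$ and $P_{q}\in\mathcal{P}_{q}$ together with the observation that $\int_{\mathbb{S}}g\,\mathrm{d}\sigma$ reads off the $\mathcal{H}_{0}$-component of $g$. For $q\geq0$ even, $\mathcal{D}_{q}$ has no $\mathcal{H}_{0}$-component, so $\int_{\mathbb{S}}A_{q}\,\mathrm{d}\sigma=0$ and the would-be simple pole at the even shift $\lambda=-n-2q$ is killed, leaving just the origin's simple pole plus the logarithmic double pole; for $q$ odd $\mathcal{P}_{q}$ has no $\mathcal{H}_{0}$-component, so $\int_{\mathbb{S}}P_{q}\,\mathrm{d}\sigma=0$ and there is no double pole at $\lambda=-n-q$, only the simple pole $-C\delta_{\infty}^{\left[-n-m\right]}$ coming from $A_{q}$. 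For $1\leq q\leq n-1$ one has $\mathcal{D}_{-q}=\mathcal{D}\left(\mathbb{S}\right)$, so $\int_{\mathbb{S}}A_{-q}\,\mathrm{d}\sigma$ need not vanish, giving simple poles at $m=q-n\in\left\{1-n,\ldots,-1\right\}$; and for $q\geq n$ one has $\mathcal{D}_{-q}=\mathcal{D}_{q-n}$, so $\int_{\mathbb{S}}A_{-q}\,\mathrm{d}\sigma=0$ exactly when $q-n$ is even, which leaves simple poles only at $m=q-n\in\left\{1,3,5,\ldots\right\}$ and analyticity at $\left\{0,2,4,\ldots\right\}$. Collecting these cases gives exactly the list in the statement, with residues $-C\delta_{\infty}^{\left[-n-m\right]}$ and singular parts (\ref{Wp 5q}). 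For the finite-part assertion, by Example \ref{example 1} applied on each ray $\mathbf{v}$ and then integrated over $\mathbb{S}$ (again by uniformity), the constant term of the Laurent expansion of $F$ at a pole $\lambda=m$ is the radial finite-part integral $\mathrm{F.p.}\int_{\mathbb{R}^{n}}s^{m}\Phi\left(\mathbf{u}\right)\,\mathrm{d}\mathbf{u}=\left\langle \mathcal{P}f_{\mathcal{W}}\left(s^{m}\right),\Phi\right\rangle $; concretely each elementary block $1/z$, $-1/z$, $1/z^{2}$ has vanishing constant term at $z=0$, matching $\mathrm{F.p.}\lim_{\varepsilon\to0^{+}}\left(-\ln\varepsilon\right)=\mathrm{F.p.}\lim_{T\to\infty}\ln T=\mathrm{F.p.}\lim_{T\to\infty}\tfrac{1}{2}\left(\ln T\right)^{2}=0$.

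I expect the main obstacle to be the case bookkeeping in the third paragraph — keeping track of which of the spaces $\mathcal{D}_{q}$ (for $q>0$, for $1-n\leq q\leq-1$, and for $q\leq-n$, where the reflection $\mathcal{D}_{q}=\mathcal{D}_{-n-q}$ intervenes) do or do not contain the constants, and correctly matching the integer $-n-m$ with the order of the thick delta $\delta_{\infty}^{\left[\,\cdot\,\right]}$ — together with the justification that residue and finite-part extraction commute with $\int_{\mathbb{S}}$, which rests on the strong (uniform in $\mathbf{v}$, termwise-differentiable) nature of the expansions in $\mathcal{S}_{\ast}$ and $\mathcal{W}$; everything else is routine once the splitting and the two elementary integral formulas are in hand.
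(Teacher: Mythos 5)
Your proposal is correct and follows essentially the route the paper indicates: the paper does not write out a proof but simply points to Example \ref{example 1}, and your argument is an honest and careful elaboration of precisely that idea — split off the origin with a cutoff, feed the strong expansion (\ref{Wdef}) into the radial integral at infinity, use the two elementary $\int_1^\infty s^{z-1}\,\mathrm{d}s$ and $\int_1^\infty s^{z-1}\ln s\,\mathrm{d}s$ formulas together with (\ref{4}), and then let the membership conditions $A_q\in\mathcal{D}_q$, $P_q\in\mathcal{P}_q$ kill the spurious poles. The only cosmetic issue is a slight index overload in the third paragraph (you use $q$ both as the generic expansion index and as the $q$ of $m=-n-2q$), but the bookkeeping itself is right.
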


Many of the constructions that we have discussed can also be done in the space
$\mathcal{W}_{\text{pre}}^{\prime}.$ Notice, however, that several
distributions of $\mathcal{W}_{\text{pre}}^{\prime}$ could vanish in
$\mathcal{W}$ so that their projection to $\mathcal{W}^{\prime}$ could be
zero. For instance, the plain thick delta $\delta_{\infty}^{\left[  0\right]
}$ is not zero in $\mathcal{W}_{\text{pre}}^{\prime}$ but it is zero in
$\mathcal{W}^{\prime}.$ If one considers the finite part $\mathcal{P}%
f_{\mathcal{W}_{\text{pre}}}\left(  s^{\lambda}\right)  $ then it would not be
analytic at $\lambda=0,2,4,\ldots;$ for instance, it has a simple pole at
$\lambda=0$ with residue $-C\delta_{\infty}^{\left[  0\right]  }.$

One of the consequences of the fact that $\mathcal{W}^{\prime}$ is a space
over the compact space $\mathbb{R}_{\text{c}}^{n}$ is that several of the
usual operations on $sl-$thick distributions could have additional terms at
infinity. This is the case for the linear changes of variables and for the
multiplications by polynomials. Curiously, however, derivatives in
$\mathcal{W}^{\prime}$ can be defined in the standard way by duality, since
the derivative operators send $\mathcal{W}$ to $\mathcal{W},$%
\begin{equation}
\left\langle \nabla_{j}\left(  F\right)  ,\Phi\right\rangle =-\left\langle
F,\nabla_{j}\left(  \Phi\right)  \right\rangle \,,\ \ \ F\in\mathcal{W}%
^{\prime},\Phi\in\mathcal{W}\,. \label{Wp 5}%
\end{equation}

\subsection{Linear changes of variables in $\mathcal{W}^{\prime}%
$\label{Subsection: Linear changes of variables in}}

Let $A$ be a non-singular $n\times n$ matrix. If $\Phi\in\mathcal{W}$ then the
function $\Phi_{A}$ given by $\Phi_{A}\left(  \mathbf{u}\right)  =\Phi\left(
A\mathbf{u}\right)  $ does not belong to $\mathcal{W},$ in general, but it
belongs to $\mathcal{W}_{\text{pre}}.$ Therefore we define the function of
$\mathcal{W}$ obtained by the change of variables, $\tau_{A}^{\mathcal{W}%
}\left(  \Phi\right)  $ as%
\begin{equation}
\tau_{A}^{\mathcal{W}}\left(  \Phi\right)  =\Pi_{\mathcal{W}_{\text{pre}%
},\mathcal{W}}\left(  \Phi_{A}\right)  \,. \label{lc 1}%
\end{equation}
We can then define the change of variables in $sl-$thick distributions by duality.

\begin{definition}
\label{Definition cl 1}Let $A$ be a non-singular $n\times n$ matrix. If
$F\in\mathcal{W}^{\prime}$ then the distribution $\tau_{A}^{\mathcal{W}%
^{\prime}}\left(  F\right)  ,$ the $sl-$thick version of $F\left(
A\mathbf{u}\right)  ,$ is defined as%
\begin{equation}
\left\langle \tau_{A}^{\mathcal{W}^{\prime}}\left(  F\right)  ,\Phi
\right\rangle =\frac{1}{\left\vert \det\left(  A\right)  \right\vert
}\left\langle F,\tau_{A^{-1}}^{\mathcal{W}}\left(  \Phi\right)  \right\rangle
\,. \label{lc 2}%
\end{equation}

\end{definition}

It is important to observe that if $F\in\mathcal{K}^{\prime}\left(
\mathbb{R}^{n}\right)  $ then it has a canonical extension to $\mathcal{W}%
^{\prime}\left(  \mathbb{R}_{\text{c}}^{n}\right)  ,$ and the restriction of
$\tau_{A}^{\mathcal{W}^{\prime}}\left(  F\right)  \left(  \mathbf{x}\right)  $
to $\mathbb{R}^{n}$ is precisely $F\left(  A\mathbf{x}\right)  $ but in
general $\tau_{A}^{\mathcal{W}^{\prime}}\left(  F\right)  \left(
\mathbf{x}\right)  $ is \emph{not} the canonical extension of $F\left(
A\mathbf{x}\right)  .$ A simple example is provided by the delta function at
the origin, $\delta\left(  \mathbf{x}\right)  ,$ and the change $A=tI$ for
$t\neq0.$ We have $\delta\left(  t\mathbf{x}\right)  =\left\vert t\right\vert
^{-n}\delta\left(  \mathbf{x}\right)  ,$ of course, but%
\begin{equation}
\tau_{tI}^{\mathcal{W}^{\prime}}\left(  \delta\right)  \left(  \mathbf{x}%
\right)  =\left\vert t\right\vert ^{-n}\delta\left(  \mathbf{x}\right)
-\left\vert t\right\vert ^{-n}\ln t\,\delta_{\infty,\ln}^{\left[  0\right]
}\left(  \mathbf{x}\right)  \,. \label{lc 3}%
\end{equation}
Interestingly, if $A$ is an orthogonal matrix, in particular if it is a
rotation, and $F\in\mathcal{K}^{\prime}\left(  \mathbb{R}^{n}\right)  $\ then
the canonical extension of $F\left(  A\mathbf{x}\right)  $ is precisely
$\tau_{A}^{\mathcal{W}^{\prime}}\left(  F\right)  \left(  \mathbf{x}\right)
.$ Therefore we give the following definitions.

\begin{definition}
\label{Definition lc 2}A $sl-$thick distribution $F\in\mathcal{W}^{\prime}$ is
called radial if $\tau_{A}^{\mathcal{W}^{\prime}}\left(  F\right)  =F$ for all
orthogonal matrices $A.$ We say that $F$ is homogeneous of order $\lambda$ if
\begin{equation}
\tau_{tI}^{\mathcal{W}^{\prime}}\left(  F\right)  \left(  \mathbf{x}\right)
=t^{\lambda}F\left(  \mathbf{x}\right)  \,,\ \ \ \ t>0\,. \label{lc 4}%
\end{equation}

\end{definition}

Notice that a distribution $F\in\mathcal{K}^{\prime}\left(  \mathbb{R}%
^{n}\right)  $\ is radial if and only if its canonical extension is, but
(\ref{lc 3}) shows that a corresponding result does not hold for homogeneous
distributions. On the other hand, a distribution of the form $G\left(
\mathbf{v}\right)  \delta_{\infty}^{\left[  q\right]  }$ is radial if and only
if $G$ is constant, where we observe that the plain thick delta at infinity
$\delta_{\infty}^{\left[  q\right]  }$ is a non zero $sl-$thick distribution
for $q\neq0,2,4,\ldots$ and $q\neq-n,-n-2,-n-4,\ldots.$ Furthermore, since the
plain thick logarithmic deltas at infinity $\delta_{\ln,\infty}^{\left[
1\right]  },\delta_{\ln,\infty}^{\left[  3\right]  },\delta_{\ln,\infty
}^{\left[  5\right]  },\ldots$ vanish, the distributions $c\delta_{\ln,\infty
}^{\left[  q\right]  }$ for $q=0,2,4,\ldots$ and $c$ constant are the radial
distributions of the form $G\left(  \mathbf{v}\right)  \delta_{\ln,\infty
}^{\left[  q\right]  }.$

It is useful to know the $sl-$thick radial homogeneous distributions.

\begin{proposition}
\label{Prop. lc 1}Let $\lambda\in\mathbb{C}.$ Then the set of $sl-$thick
radial homogeneous distributions of order $\lambda$ form a vector space of
dimension $1,$ generated by the distribution%
\begin{equation}
\mathcal{P}f_{\mathcal{W}}\left(  s^{\lambda}\right)  \text{ for }\lambda
\in\left(  \mathbb{C}\setminus\mathbb{Z}\right)  \cup\left\{  0,2,4,\ldots
\right\}  \,, \label{lc 7}%
\end{equation}%
\begin{align}
&  \ \ \ \ \ \ \ \ \ \ \ \ \ \ \delta_{\infty}^{\left[  -n-m\right]
}\ \ \ \text{for }\lambda=m,\label{lc 5}\\
&  m\in\left\{  -n-1,-n-3,-n-5,\ldots\right\}  \cup\left\{  1,2,\ldots
,n-1\right\}  \cup\left\{  1,3,5,\ldots\right\}  \,,\nonumber
\end{align}%
\begin{equation}
\delta_{\ln,\infty}^{\left[  -n-m\right]  }\text{ \ \ for }\lambda
=m,\ \ m\in\left\{  -n,-n-2,-n-4,\ldots\right\}  \,. \label{lc 6}%
\end{equation}

\end{proposition}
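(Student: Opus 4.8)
The plan is to establish the two halves of the statement: first that each distribution appearing in (\ref{lc 7}), (\ref{lc 5}), (\ref{lc 6}) is radial, homogeneous of the stated order and non-zero, and then, conversely, that every radial $sl$-thick distribution that is homogeneous of order $\lambda$ is a scalar multiple of the one attached to that $\lambda$.

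\emph{The generators.} Radiality is immediate in each case: $s=|\mathbf{u}|$ is rotation invariant, and for orthogonal $A$ the function $\tau^{\mathcal{W}}_{A^{-1}}(\Phi)=\Phi(A^{-1}\,\cdot\,)$ carries no polynomial correction, while its asymptotic coefficients are $A_q(A^{-1}\mathbf{v})$ and $P_q(A^{-1}\mathbf{v})$, whose spherical means coincide with those of $A_q$, $P_q$. For the homogeneity of the thick deltas I would compute the effect of $\tau^{\mathcal{W}}_{t^{-1}I}$ on the expansion (\ref{Wdef}): after substituting $\mathbf{u}\mapsto t^{-1}\mathbf{u}$ and deleting the polynomial part produced by the $s^{q}\ln s$ terms, the coefficient of $s^{q}$, respectively of $s^{q}\ln s$, becomes $t^{-q}A_q$, respectively $t^{-q}P_q$; hence $\tau^{\mathcal{W}'}_{tI}\delta^{[q]}_{\infty}=t^{-n-q}\delta^{[q]}_{\infty}$ and likewise for $\delta^{[q]}_{\ln,\infty}$, so these are homogeneous of order $-n-q$. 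For $\mathcal{P}f_{\mathcal{W}}(s^{\lambda})$ with $\lambda\in(\mathbb{C}\setminus\mathbb{Z})\cup\{0,2,4,\dots\}$ — the range where it is analytic by Lemma \ref{Lemma Wp} — homogeneity follows from the scaling of finite part integrals: the polynomial correction of $\tau^{\mathcal{W}}_{t^{-1}I}\Phi$ has radial finite part zero (the radial finite part of $s^{\lambda}s^{q}$ over $\mathbb{R}^{n}$ vanishes whenever $\lambda+q+n\neq 0$, which holds here), and the dilation $\mathbf{u}\mapsto t\mathbf{u}$ commutes with the radial finite part at $\mathbf{0}$ and at $\mathbf{\infty}$ because for these $\lambda$ none of the critical (logarithmic) exponents $-n,-n-1,-n-2,\dots$ is met. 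Non-vanishing is checked by evaluating $\mathcal{P}f_{\mathcal{W}}(s^{\lambda})$ on a non-negative bump supported in an annulus, and, for the thick deltas, by noting that the spherical mean of $A_{-n-\lambda}$ (resp.\ of $P_{-n-\lambda}$) can be made non-zero precisely when $\mathcal{H}_{0}\subset\mathcal{D}_{-n-\lambda}$ (resp.\ $\mathcal{H}_{0}\subset\mathcal{P}_{-n-\lambda}$), which singles out exactly the orders in (\ref{lc 5}) (resp.\ in (\ref{lc 6})).

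\emph{Uniqueness.} Let $F\in\mathcal{W}^{\prime}$ be radial and homogeneous of order $\lambda$. Averaging over the orthogonal group (compact, acting continuously on $\mathcal{W}$) shows that $\langle F,\Phi\rangle$ depends only on the spherical average of $\Phi$, so $F$ is determined by its restriction to the radial test functions. On $\mathcal{D}(\mathbb{R}^{n})$, where $\tau^{\mathcal{W}}_{t^{-1}I}$ is the ordinary dilation, $F$ is a classical radial distribution homogeneous of degree $\lambda$, and the space of these is one-dimensional, spanned by $\mathcal{P}f(r^{\lambda})$ when $\lambda\notin\{-n,-n-2,-n-4,\dots\}$ and by $\nabla^{2k}\delta$ when $\lambda=-n-2k$. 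I then compare $F$ with the meromorphic family $\mathcal{P}f_{\mathcal{W}}(s^{\lambda})$ of Lemma \ref{Lemma Wp}, whose restriction to $\mathcal{D}(\mathbb{R}^{n})$ is $\mathcal{P}f(r^{\lambda})$. If $\lambda\in(\mathbb{C}\setminus\mathbb{Z})\cup\{0,2,4,\dots\}$, then $\mathcal{P}f_{\mathcal{W}}(s^{\lambda})$ is itself radial homogeneous of order $\lambda$, and subtracting the appropriate multiple leaves an $F$ supported at $\{\mathbf{\infty}\}$. If $\lambda$ is one of the integers in (\ref{lc 5}) or (\ref{lc 6}), then $\mathcal{P}f_{\mathcal{W}}$ has a pole there; applying the standard Hadamard expansion to the relation $\tau^{\mathcal{W}^{\prime}}_{tI}\mathcal{P}f_{\mathcal{W}}(s^{\lambda})=t^{\lambda}\mathcal{P}f_{\mathcal{W}}(s^{\lambda})$ (valid for $\lambda$ not a pole), I get that the finite part $\mathcal{P}f_{\mathcal{W}}(s^{\lambda})$ transforms with a $\ln t$ anomaly proportional to the residue $-C\delta^{[-n-\lambda]}_{\infty}$, or, at a double pole, proportional to $C\delta^{[-n-\lambda]}_{\ln,\infty}$. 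Since thick deltas at infinity transform under $\tau^{\mathcal{W}^{\prime}}_{tI}$ by a pure power of $t$, no such anomaly can occur on the side of $F$; writing $F|_{\mathcal{D}(\mathbb{R}^{n})}=c_{1}\cdot(\text{classical generator})$ and subtracting $c_{1}\mathcal{P}f_{\mathcal{W}}(s^{\lambda})$, the homogeneity identity for the difference forces $c_{1}=0$, so $F$ itself is supported at $\{\mathbf{\infty}\}$.

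\emph{Conclusion.} In every case we are reduced to classifying the radial $F\in\mathcal{W}^{\prime}$ supported at $\{\mathbf{\infty}\}$ and homogeneous of order $\lambda$. Such an $F$ annihilates every $\Phi\in\mathcal{W}$ whose asymptotic expansion at infinity vanishes; by the structure theorem for distributions concentrated at a thick point (transported to $\mathbf{\infty}$ via $\mathbf{x}\mapsto\mathbf{x}/|\mathbf{x}|^{2}$, in the manner of \cite{YE2}) it is therefore a finite sum $\sum_{q}C\,G_{q}(\mathbf{v})\delta^{[q]}_{\infty}+\sum_{q}C\,H_{q}(\mathbf{v})\delta^{[q]}_{\ln,\infty}$; radiality makes the $G_{q},H_{q}$ constant, and since the $q$-th summand is homogeneous of order $-n-q$, homogeneity of $F$ leaves only the term with $q=-n-\lambda$. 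For each $\lambda$ at most one of $\delta^{[-n-\lambda]}_{\infty}$, $\delta^{[-n-\lambda]}_{\ln,\infty}$ is non-zero, and it is exactly the one named in (\ref{lc 5}) or (\ref{lc 6}); for $\lambda\in(\mathbb{C}\setminus\mathbb{Z})\cup\{0,2,4,\dots\}$ both vanish and $F$ is a multiple of $\mathcal{P}f_{\mathcal{W}}(s^{\lambda})$. This yields a one-dimensional space with the claimed generator. The main obstacle is the bookkeeping in the exceptional cases — reading off the $\ln t$ anomaly from Lemma \ref{Lemma Wp} and keeping track of precisely which of the two thick deltas at infinity survives for a given $\lambda$ — together with the appeal to the structure theorem for $sl$-thick distributions supported at $\mathbf{\infty}$, which must first be set up for the special logarithmic asymptotic scale.
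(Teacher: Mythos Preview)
The paper states Proposition \ref{Prop. lc 1} without proof, so there is no argument to compare yours against; your task was effectively to supply one, and the outline you give is sound.

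Your strategy---verify that each listed generator is radial, homogeneous, and non-zero; reduce an arbitrary radial homogeneous $F\in\mathcal{W}'$ to one supported at $\{\mathbf{\infty}\}$ by matching its projection to $\mathcal{S}'(\mathbb{R}^{n})$; then invoke a structure theorem for such distributions---is correct and is the natural approach. The anomaly computation you use at the poles of $\mathcal{P}f_{\mathcal{W}}(s^{\lambda})$ is the right mechanism: expanding the identity $\tau^{\mathcal{W}'}_{tI}\mathcal{P}f_{\mathcal{W}}(s^{\lambda})=t^{\lambda}\mathcal{P}f_{\mathcal{W}}(s^{\lambda})$ in Laurent series about the pole and equating finite parts does produce a $\ln t$ term proportional to the residue (or to the leading singular coefficient at a double pole), and since the thick deltas $\delta^{[q]}_{\infty}$, $\delta^{[q]}_{\ln,\infty}$ themselves scale by a pure power of $t$, this forces the coefficient $c_{1}$ of the classical piece to vanish. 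Your bookkeeping on which of $\delta^{[-n-\lambda]}_{\infty}$ and $\delta^{[-n-\lambda]}_{\ln,\infty}$ survives is accurate: it comes down to whether $\mathcal{H}_{0}\subset\mathcal{D}_{-n-\lambda}$ or $\mathcal{H}_{0}\subset\mathcal{P}_{-n-\lambda}$, and these are mutually exclusive in the integer cases, while for $\lambda\in(\mathbb{C}\setminus\mathbb{Z})\cup\{0,2,4,\ldots\}$ both vanish.

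Two remarks. First, the homogeneity of $\mathcal{P}f_{\mathcal{W}}(s^{\lambda})$ at $\lambda=0,2,4,\ldots$ is more cleanly obtained by analytic continuation from the non-integer case (where it is obvious) using Lemma \ref{Lemma Wp}, rather than by a direct finite-part argument; your direct argument works but requires checking that the critical spherical averages vanish because $A_{-n-\lambda}\in\mathcal{D}_{-n-\lambda}$ excludes $\mathcal{H}_{0}$. Second, the structure theorem you invoke---that every $F\in\mathcal{W}'$ annihilating $\mathcal{S}(\mathbb{R}^{n})$ is a \emph{finite} combination of the $\delta^{[q]}_{\infty}$ and $\delta^{[q]}_{\ln,\infty}$---does need the continuity of $F$ with respect to the topology of $\mathcal{W}$ described in Subsection \ref{Subsection: Other spaces of thick distribution}; you flag this correctly as the main point requiring care, and it is indeed the one place where the argument is not routine.
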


\subsubsection{Multiplication by polynomials in $\mathcal{W}^{\prime}%
$\label{Subsection: Multiplication by polynomials in}}

In general if $\Phi\in\mathcal{W}$ then $u_{j}\Phi\left(  \mathbf{u}\right)  $
is in $\mathcal{W}_{\text{pre}}$\ but it does not belong to $\mathcal{W}.$
Therefore we define the multiplication operator
\begin{equation}
M_{u_{j}}^{\mathcal{W}}:\mathcal{W}\longrightarrow\mathcal{W}\,,\ \ \ M_{u_{j}%
}\left(  \Phi\right)  =\Pi_{\mathcal{W}_{\text{pre}},\mathcal{W}}\left(
u_{j}\Phi\right)  \,, \label{mu 1}%
\end{equation}
and by duality the operator $M_{u_{j}}^{\mathcal{W}^{\prime}}:\mathcal{W}%
^{\prime}\longrightarrow\mathcal{W}^{\prime}$ as
\begin{equation}
\left\langle M_{u_{j}}^{\mathcal{W}^{\prime}}\left(  F\right)  ,\Phi
\right\rangle =\left\langle F,M_{u_{j}}^{\mathcal{W}}\left(  \Phi\right)
\right\rangle \,. \label{mu 2}%
\end{equation}
The multiplication operators $M_{p}^{\mathcal{W}}$ and $M_{p}^{\mathcal{W}%
^{\prime}},$ where $p$ is a polynomial, can be defined in a similar way.

\begin{example}
\label{Example mult}Sometimes $M_{u_{j}}^{\mathcal{W}^{\prime}}\left(
F\right)  $ resembles a standard multiplication, as in the product formula%
\begin{equation}
M_{u_{j}}^{\mathcal{W}^{\prime}}\left(  \mathcal{P}f_{\mathcal{W}}\left(
s^{\lambda}\right)  \right)  =\mathcal{P}f_{\mathcal{W}}\left(  u_{j}%
s^{\lambda}\right)  \,, \label{mu 3}%
\end{equation}
but sometimes extra terms at infinity appear, as in the formula%
\begin{equation}
M_{u_{j}}^{\mathcal{W}^{\prime}}\left(  \delta\left(  \mathbf{u}\right)
\right)  =-\omega_{j}\delta_{\infty}^{\left[  -1\right]  }\left(
\mathbf{u}\right)  \,. \label{mu 4}%
\end{equation}

\end{example}

\section{The Fourier transform of thick
distributions\label{Section: The Fourier transform of thick distributions}}

The Fourier transform of thick tempered distributions $f\in\mathcal{S}_{\ast
}^{\prime}\left(  \mathbb{R}^{n}\right)  ,$ $\mathcal{F}_{\ast}\left(
f\right)  \in\mathcal{W}^{\prime}\left(  \mathbb{R}_{\text{c}}^{n}\right)  $
can now be defined in the usual way,%
\begin{equation}
\left\langle \mathcal{F}_{\ast}\left\{  f\left(  \mathbf{x}\right)
;\mathbf{u}\right\}  ,\Phi\left(  \mathbf{u}\right)  \right\rangle
=\left\langle f\left(  \mathbf{x}\right)  ,\mathcal{F}_{\text{t}}^{\ast
}\left\{  \Phi\left(  \mathbf{u}\right)  ;\mathbf{x}\right\}  \right\rangle
\,,\ \ \ \Phi\in\mathcal{W}\left(  \mathbb{R}_{\text{c}}^{n}\right)  \,.
\label{W 4}%
\end{equation}
Similarly, the Fourier transform of distributions $G\in\mathcal{W}^{\prime
}\left(  \mathbb{R}_{\text{c}}^{n}\right)  ,$ $\mathcal{F}^{\ast}\left(
G\right)  \in\mathcal{S}_{\ast}^{\prime}\left(  \mathbb{R}^{n}\right)  $ is
defined as%
\begin{equation}
\left\langle \mathcal{F}^{\ast}\left\{  G\left(  \mathbf{u}\right)
;\mathbf{x}\right\}  ,\phi\left(  \mathbf{x}\right)  \right\rangle
=\left\langle G\left(  \mathbf{u}\right)  ,\mathcal{F}_{\ast,\text{t}}\left\{
\phi\left(  \mathbf{x}\right)  ;\mathbf{u}\right\}  \right\rangle
\,,\ \ \ \phi\in\mathcal{S}_{\ast}\left(  \mathbb{R}^{n}\right)  \,.
\label{W 5}%
\end{equation}

\begin{theorem}
\label{Theorem Wprime}The thick Fourier transform $\mathcal{F}_{\ast}$ is a
topological isomorphism of $\mathcal{S}_{\ast}^{\prime}\left(  \mathbb{R}%
^{n}\right)  $ onto $\mathcal{W}^{\prime}\left(  \mathbb{R}_{\text{c}}%
^{n}\right)  .$ The thick Fourier transform $\mathcal{F}^{\ast}$ is a
topological isomorphism of $\mathcal{W}^{\prime}\left(  \mathbb{R}_{\text{c}%
}^{n}\right)  $ onto $\mathcal{S}_{\ast}^{\prime}\left(  \mathbb{R}%
^{n}\right)  .$
\end{theorem}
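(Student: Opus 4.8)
The plan is to obtain the statement from Theorem \ref{Theorem W 1} by a purely dual (transpose) argument, since $\mathcal{F}_{\ast}$ and $\mathcal{F}^{\ast}$ have been \emph{defined} as transposes. Indeed, (\ref{W 4}) says precisely that $\mathcal{F}_{\ast}$ is the transpose of $\mathcal{F}_{\text{t}}^{\ast}:\mathcal{W}\left(\mathbb{R}_{\text{c}}^{n}\right)\to\mathcal{S}_{\ast}\left(\mathbb{R}^{n}\right)$, namely $\langle\mathcal{F}_{\ast}f,\Phi\rangle=\langle f,\mathcal{F}_{\text{t}}^{\ast}\Phi\rangle$ for all $\Phi\in\mathcal{W}\left(\mathbb{R}_{\text{c}}^{n}\right)$, while (\ref{W 5}) says that $\mathcal{F}^{\ast}$ is the transpose of $\mathcal{F}_{\ast,\text{t}}:\mathcal{S}_{\ast}\left(\mathbb{R}^{n}\right)\to\mathcal{W}\left(\mathbb{R}_{\text{c}}^{n}\right)$. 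The first point to settle is that these transposes are well defined as maps into the indicated dual spaces: if $f\in\mathcal{S}_{\ast}^{\prime}\left(\mathbb{R}^{n}\right)$, then $\Phi\mapsto\langle f,\mathcal{F}_{\text{t}}^{\ast}\Phi\rangle$ is the composition of the continuous operator $\mathcal{F}_{\text{t}}^{\ast}$ with the continuous functional $f$, hence lies in $\mathcal{W}^{\prime}\left(\mathbb{R}_{\text{c}}^{n}\right)$, and it depends linearly on $f$; the same reasoning shows $\mathcal{F}^{\ast}$ maps $\mathcal{W}^{\prime}\left(\mathbb{R}_{\text{c}}^{n}\right)$ linearly into $\mathcal{S}_{\ast}^{\prime}\left(\mathbb{R}^{n}\right)$.

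The core of the argument is then the standard fact that the transpose of a topological isomorphism of locally convex spaces is a topological isomorphism of their (strong) duals: if $T:E\to F$ is a linear homeomorphism, then $T^{t}:F^{\prime}\to E^{\prime}$ is a linear bijection with $(T^{t})^{-1}=(T^{-1})^{t}$, and since $T$ and $T^{-1}$ carry bounded sets to bounded sets, both $T^{t}$ and $(T^{t})^{-1}$ are continuous for the topologies of uniform convergence on bounded sets carried by $\mathcal{S}_{\ast}^{\prime}$ and $\mathcal{W}^{\prime}$. Applying this with $T=\mathcal{F}_{\text{t}}^{\ast}$ and with $T=\mathcal{F}_{\ast,\text{t}}$ — both topological isomorphisms by Theorem \ref{Theorem W 1} — we conclude that $\mathcal{F}_{\ast}=\left(\mathcal{F}_{\text{t}}^{\ast}\right)^{t}$ is a topological isomorphism of $\mathcal{S}_{\ast}^{\prime}\left(\mathbb{R}^{n}\right)$ onto $\mathcal{W}^{\prime}\left(\mathbb{R}_{\text{c}}^{n}\right)$, and $\mathcal{F}^{\ast}=\left(\mathcal{F}_{\ast,\text{t}}\right)^{t}$ is a topological isomorphism in the opposite direction.

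It only remains to record the consistency between the two: transposing the test-function identity (\ref{W 3}) (and the inverse relation obtained from it, which expresses $\left(\mathcal{F}_{\text{t}}^{\ast}\right)^{-1}$ through $\mathcal{F}_{\ast,\text{t}}$ up to the factor $(2\pi)^{n}$ and the reflection $\mathbf{x}\mapsto-\mathbf{x}$) yields the corresponding Fourier inversion formulas in $\mathcal{S}_{\ast}^{\prime}$ and $\mathcal{W}^{\prime}$, so that $\mathcal{F}_{\ast}$ and $\mathcal{F}^{\ast}$ are indeed mutually inverse up to these elementary operators, confirming the bijectivity from an independent angle. I do not expect any genuine obstacle at this stage: all of the analytic substance — the strong asymptotic expansions (\ref{b 3}), the precise structure of $\mathcal{W}$, and the bicontinuity of $\mathcal{F}_{\ast,\text{t}}$ and $\mathcal{F}_{\text{t}}^{\ast}$ — has already been established in Theorem \ref{Theorem W 1}, and the present theorem is its formal dual counterpart; the only thing one must be slightly careful about is to state clearly which topology the dual spaces carry and to invoke the transpose principle for that topology.
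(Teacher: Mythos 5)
Your proposal is correct and is precisely the argument the paper intends: since (\ref{W 4}) and (\ref{W 5}) define $\mathcal{F}_{\ast}$ and $\mathcal{F}^{\ast}$ as the transposes of $\mathcal{F}_{\text{t}}^{\ast}$ and $\mathcal{F}_{\ast,\text{t}}$, and those are topological isomorphisms by Theorem \ref{Theorem W 1}, the conclusion follows from the standard fact that the transpose of a topological isomorphism is a topological isomorphism of the strong duals. The paper states Theorem \ref{Theorem Wprime} without proof for exactly this reason, so your blind reconstruction coincides with the intended one.
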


The properties of the Fourier transform of thick distributions are similar to
those of the transform in $\mathcal{S}^{\prime}\left(  \mathbb{R}^{n}\right)
$ but one must remember that the operations in $\mathcal{W}^{\prime}\left(
\mathbb{R}_{\text{c}}^{n}\right)  $ may or may not be the standard ones. We
have,
\begin{equation}
\mathcal{F}_{\ast}\left\{  f\left(  A\mathbf{x}\right)  ;\mathbf{u}\right\}
=\frac{1}{\left\vert \det A\right\vert }\tau_{A^{-1}}^{\mathcal{W}^{\prime}%
}\left(  \mathcal{F}_{\ast}\left\{  f\left(  \mathbf{x}\right)  ;\mathbf{u}%
\right\}  \right)  \,, \label{FW 1}%
\end{equation}
for $A$ a non-singular matrix, and in particular, if $t\neq0$%
\begin{equation}
\mathcal{F}_{\ast}\left\{  f\left(  t\mathbf{x}\right)  ;\mathbf{u}\right\}
=t^{-n}\tau_{t^{-n}I}^{\mathcal{W}^{\prime}}\left(  \mathcal{F}_{\ast}\left\{
f\left(  \mathbf{x}\right)  ;\mathbf{u}\right\}  \right)  \,. \label{Fw 2}%
\end{equation}
It follows that $\mathcal{F}_{\ast}$ and $\mathcal{F}^{\ast}$ send radial
thick distributions to radial thick distributions, and homogeneous
distributions of degree $\lambda$ to homogeneous distributions of degree
$-n-\lambda.$ We also have the usual interchange of multiplication and
differentiation,%
\begin{equation}
\mathcal{F}_{\ast}\left\{  x_{j}f\left(  \mathbf{x}\right)  ;\mathbf{u}%
\right\}  =-i\nabla_{u_{j}}\mathcal{F}_{\ast}\left\{  f\left(  \mathbf{x}%
\right)  ;\mathbf{u}\right\}  \,, \label{FW 3}%
\end{equation}%
\begin{equation}
\mathcal{F}_{\ast}\left\{  \nabla_{x_{j}}f\left(  \mathbf{x}\right)
;\mathbf{u}\right\}  =-iM_{u_{j}}^{\mathcal{W}^{\prime}}\mathcal{F}_{\ast
}\left\{  f\left(  \mathbf{x}\right)  ;\mathbf{u}\right\}  \,, \label{FW 4}%
\end{equation}
where the modified multiplication operator $M_{u_{j}}^{\mathcal{W}^{\prime}}%
$\ is given by (\ref{mu 2}). The formulas for the inverse transforms are a
variant of the usual ones,%
\begin{equation}
\left(  \mathcal{F}^{\ast}\right)  ^{-1}\left\{  f\left(  \mathbf{x}\right)
;\mathbf{u}\right\}  =\frac{1}{\left(  2\pi\right)  ^{n}}\mathcal{F}_{\ast
}\left\{  f\left(  \mathbf{x}\right)  ;-\mathbf{u}\right\}  \,, \label{FW 5}%
\end{equation}%
\begin{equation}
\left(  \mathcal{F}_{\ast}\right)  ^{-1}\left\{  F\left(  \mathbf{u}\right)
;\mathbf{x}\right\}  =\frac{1}{\left(  2\pi\right)  ^{n}}\mathcal{F}^{\ast
}\left\{  F\left(  \mathbf{u}\right)  ;-\mathbf{x}\right\}  \,. \label{FW 6}%
\end{equation}
Another important property is that the Fourier transforms $\mathcal{F}_{\ast}$
or $\mathcal{F}^{\ast}$ of extensions of distributions of $\mathcal{S}%
^{\prime}\left(  \mathbb{R}^{n}\right)  $ to $\mathcal{S}_{\ast}^{\prime
}\left(  \mathbb{R}^{n}\right)  $ or $\mathcal{W}^{\prime}\left(
\mathbb{R}_{\text{c}}^{n}\right)  $ are extensions of the Fourier transform,
that is%
\begin{equation}
\Pi_{\mathcal{W}^{\prime},\mathcal{S}^{\prime}}\mathcal{F}_{\ast}\left\{
f\left(  \mathbf{x}\right)  ;\mathbf{u}\right\}  =\mathcal{F}\left\{
\Pi_{\mathcal{S}_{\ast}^{\prime},\mathcal{S}^{\prime}}f\left(  \mathbf{x}%
\right)  ;\mathbf{u}\right\}  \,, \label{FW 7}%
\end{equation}%
\begin{equation}
\Pi_{\mathcal{S}_{\ast}^{\prime},\mathcal{S}^{\prime}}\mathcal{F}^{\ast
}\left\{  F\left(  \mathbf{u}\right)  ;\mathbf{x}\right\}  =\mathcal{F}%
\left\{  \Pi_{\mathcal{W}^{\prime},\mathcal{S}^{\prime}}F\left(
\mathbf{u}\right)  ;\mathbf{x}\right\}  \,. \label{FW 8}%
\end{equation}

We are now ready to give the Fourier transform of several thick distributions.

\begin{example}
\label{example 2}Let us compute the Fourier transform $\mathcal{F}_{\ast
}\left\{  \delta_{\ast}^{\left[  0\right]  }\left(  \mathbf{x}\right)
;\mathbf{u}\right\}  $ of the plain thick delta function. Since $\delta_{\ast
}^{\left[  0\right]  }\left(  \mathbf{x}\right)  $ is radial and homogenous of
degree $-n,$ its transform is radial and homogeneous of degree $0.$ Also, the
projection of $\delta_{\ast}^{\left[  0\right]  }\left(  \mathbf{x}\right)  $
onto $\mathcal{S}^{\prime}$ is the standard delta function $\delta\left(
\mathbf{x}\right)  ,$ whose transform is the constant function $1.$ From the
Proposition \ref{Prop. lc 1} it follows that the only radial, homogeneous of
degree $0$ $sl-$thick distribution whose projection to $\mathcal{S}^{\prime}$
is the constant distribution $1$ is precisely $\mathcal{P}f_{\mathcal{W}%
}\left(  1\right)  .$ Hence%
\begin{equation}
\mathcal{F}_{\ast}\left\{  \delta_{\ast}^{\left[  0\right]  }\left(
\mathbf{x}\right)  ;\mathbf{u}\right\}  =\mathcal{P}f_{\mathcal{W}}\left(
1\right)  \,. \label{FW 9}%
\end{equation}
A similar argument yields%
\begin{equation}
\mathcal{F}_{\ast}\left\{  \delta_{\ast}^{\left[  2m\right]  }\left(
\mathbf{x}\right)  ;\mathbf{u}\right\}  =\frac{\left(  -1\right)  ^{m}%
\Gamma\left(  m+1/2\right)  \Gamma\left(  n/2\right)  }{\Gamma\left(
m+n/2\right)  \Gamma\left(  1/2\right)  \left(  2m\right)  !}\mathcal{P}%
f_{\mathcal{W}}\left(  s^{2m}\right)  \,, \label{FW 10}%
\end{equation}
and by inversion,%
\begin{equation}
\mathcal{F}^{\ast}\left\{  \mathcal{P}f_{\mathcal{W}}\left(  s^{2m}\right)
;\mathbf{x}\right\}  =\frac{\left(  -1\right)  ^{m}\Gamma\left(  m+n/2\right)
\Gamma\left(  1/2\right)  \left(  2m\right)  !}{\left(  2\pi\right)
^{n}\Gamma\left(  m+1/2\right)  \Gamma\left(  n/2\right)  }\delta_{\ast
}^{\left[  2m\right]  }\left(  \mathbf{x}\right)  \,. \label{FW 11}%
\end{equation}

\end{example}

\begin{example}
\label{Example 3}The ensuing formulas, reminiscent of (\ref{2}), also follow
along the same lines,%
\begin{equation}
\mathcal{F}_{\ast}\left\{  \mathcal{P}f\left(  r^{\lambda}\right)
;\mathbf{u}\right\}  =\frac{\pi^{n/2}2^{\lambda+n}\Gamma\left(  \frac
{\lambda+n}{2}\right)  }{\Gamma\left(  -\frac{\lambda}{2}\right)  }%
\mathcal{P}f_{\mathcal{W}}\left(  s^{-\lambda-n}\right)  \,, \label{FF 1}%
\end{equation}%
\begin{equation}
\mathcal{F}^{\ast}\left\{  \mathcal{P}f_{\mathcal{W}}\left(  s^{\lambda
}\right)  ;\mathbf{x}\right\}  =\frac{\pi^{n/2}2^{\lambda+n}\Gamma\left(
\frac{\lambda+n}{2}\right)  }{\Gamma\left(  -\frac{\lambda}{2}\right)
}\mathcal{P}f\left(  r^{-\lambda-n}\right)  \,, \label{FF 2}%
\end{equation}
whenever $\lambda\in\mathbb{C}\setminus\mathbb{Z}.$ Interestingly,
$\mathcal{P}f_{\mathcal{W}}\left(  s^{\lambda}\right)  $ is analytic at
$0,2,4,\ldots$ so that (\ref{FW 11}) can be recovered by taking the limit as
$\lambda\rightarrow2m$ in the right side of (\ref{FF 2}).
\end{example}

\begin{example}
\label{Example 4}Formulas (\ref{FF 1}) and (\ref{FF 2}) are equalities of
meromorphic functions and thus by considering the residues, finite parts, or
singular parts at the poles of both sides we obtain the Fourier transform of
several thick distributions. Let start with $m\in\left\{
-n-1,-n-3,-n-5,\ldots\right\}  \cup\left\{  -1,-2,\ldots,1-n\right\}
\cup\left\{  1,3,5,\ldots\right\}  ,$ so that $\lambda=m$ is a simple pole of
the function in (\ref{FF 2}). From the Lemma \ref{Lemma Wp} the residue of the
left side is $\mathcal{F}^{\ast}\left\{  -C\delta_{\infty}^{\left[
-n-m\right]  }\left(  \mathbf{u}\right)  ;\mathbf{x}\right\}  ,$ while if we
recall \cite[(4.13)]{YE2}\ that $\operatorname*{Res}_{\mu=k}\mathcal{P}%
f\left(  r^{\mu}\right)  =C\delta_{\ast}^{\left[  -k-n\right]  }\left(
\mathbf{x}\right)  ,$ we obtain the residue of the right side as $Cg\left(
m\right)  \delta_{\ast}^{\left[  m\right]  }\left(  \mathbf{x}\right)  $ where%
\begin{equation}
g\left(  \lambda\right)  =\frac{\pi^{n/2}2^{\lambda+n}\Gamma\left(
\frac{\lambda+n}{2}\right)  }{\Gamma\left(  -\frac{\lambda}{2}\right)  }\,.
\label{FF 3}%
\end{equation}
Therefore%
\begin{equation}
\mathcal{F}^{\ast}\left\{  \delta_{\infty}^{\left[  -n-m\right]  }\left(
\mathbf{u}\right)  ;\mathbf{x}\right\}  =-g\left(  m\right)  \delta_{\ast
}^{\left[  m\right]  }\left(  \mathbf{x}\right)  \,, \label{FF 4}%
\end{equation}
and by inversion,%
\begin{equation}
\mathcal{F}_{\ast}\left\{  \delta_{\ast}^{\left[  m\right]  }\left(
\mathbf{x}\right)  ;\mathbf{u}\right\}  =-g\left(  -n-m\right)  \delta
_{\infty}^{\left[  -n-m\right]  }\left(  \mathbf{u}\right)  \,, \label{FF 5}%
\end{equation}
since $g\left(  m\right)  g\left(  -n-m\right)  =\left(  2\pi\right)  ^{n}.$
Similarly, consideration of the finite parts of both sides of (\ref{FF 2})
yields%
\begin{equation}
\mathcal{F}^{\ast}\left\{  \mathcal{P}f_{\mathcal{W}}\left(  s^{m}\right)
;\mathbf{x}\right\}  =g\left(  m\right)  \left\{  \mathcal{P}f\left(
r^{-m-n}\right)  +\chi_{m}\delta_{\ast}^{\left[  m\right]  }\left(
\mathbf{x}\right)  \right\}  \,, \label{FF 6}%
\end{equation}
and%
\begin{equation}
\mathcal{F}_{\ast}\left\{  \mathcal{P}f\left(  r^{-m-n}\right)  ;\mathbf{u}%
\right\}  =g\left(  -n-m\right)  \left\{  \mathcal{P}f_{\mathcal{W}}\left(
s^{m}\right)  +\chi_{-m-n}\delta_{\infty}^{\left[  -n-m\right]  }\left(
\mathbf{u}\right)  \right\}  \,, \label{FF 7}%
\end{equation}
where%
\begin{equation}
\chi_{m}=\chi_{-m-n}=\frac{C}{2}\left(  2\ln2+\psi\left(  \frac{m+n}%
{2}\right)  +\psi\left(  \frac{-m}{2}\right)  \right)  \,. \label{FF 8}%
\end{equation}
Studying the coefficients of order $-2$ at the poles of order $2,$ $m=-n-2q$
for $q\in\mathbb{N}$ gives%
\begin{equation}
\mathcal{F}^{\ast}\left\{  \delta_{\ln,\infty}^{\left[  2q\right]  }\left(
\mathbf{u}\right)  ;\mathbf{x}\right\}  =\frac{\left(  -1\right)  ^{n}%
2^{1-2q}\pi^{n/2}}{q!\Gamma\left(  \frac{n+2q}{2}\right)  }\delta_{\ast
}^{\left[  -n-2q\right]  }\left(  \mathbf{x}\right)  \,, \label{FF 9}%
\end{equation}
and%
\begin{equation}
\mathcal{F}_{\ast}\left\{  \delta_{\ast}^{\left[  -n-2q\right]  }\left(
\mathbf{x}\right)  ;\mathbf{u}\right\}  =\left(  -1\right)  ^{n}2^{n+2q-1}%
\pi^{n/2}q!\Gamma\left(  \frac{n+2q}{2}\right)  \delta_{\ln,\infty}^{\left[
2q\right]  }\left(  \mathbf{u}\right)  \,. \label{FF 9p}%
\end{equation}

\end{example}

We have considered the transform of plain thick deltas so far, now we compute
the Fourier transform of general thick deltas.

\begin{example}
\label{Example 5}Let $\phi\in\mathcal{S}_{\ast},$ with expansion $\sum
_{j=m}^{\infty}a_{j}r^{j}$ at zero and let $\Phi=\mathcal{F}_{\ast,\text{t}%
}\left(  \phi\right)  \in\mathcal{W},$ with expansion $\sum_{q=0}^{n-m}\left(
A_{q}\left(  \mathbf{v}\right)  +P_{q}\left(  \mathbf{v}\right)  \ln s\right)
s^{q}+\sum_{q=1}^{\infty}A_{-q}\left(  \mathbf{v}\right)  s^{-q}$\ at
infinity. Then $A_{q}=\mathfrak{K}_{q}\left(  a_{-n-q}\right)  ,$ therefore if
$G\in\mathcal{D}_{q}^{^{\prime}}$ then
\[
\left\langle G\delta_{\infty}^{\left[  q\right]  },\Phi\right\rangle =\frac
{1}{C}\left\langle G,A_{q}\right\rangle =\frac{1}{C}\left\langle
G,\mathfrak{K}_{q}\left(  a_{-n-q}\right)  \right\rangle =\frac{1}%
{C}\left\langle \mathfrak{K}_{q}\left(  G\right)  ,a_{-n-q}\right\rangle
=\left\langle \mathfrak{K}_{q}\left(  G\right)  \delta_{\ast}^{\left[
-n-q\right]  },\phi\right\rangle \,,
\]
or%
\begin{equation}
\mathcal{F}^{\ast}\left\{  G\left(  \mathbf{v}\right)  \delta_{\infty
}^{\left[  q\right]  }\left(  \mathbf{u}\right)  ;\mathbf{x}\right\}
=\mathfrak{K}_{q}\left\{  G\left(  \mathbf{v}\right)  ;\mathbf{w}\right\}
\delta_{\ast}^{\left[  -n-q\right]  }\left(  \mathbf{x}\right)  \,,
\label{FFp 1}%
\end{equation}
giving the transform of \emph{all} thick deltas at infinity $G\delta_{\infty
}^{\left[  q\right]  },$ for arbitrary $q\in\mathbb{Z},$ since $G$ needs to be
$\mathcal{D}_{q}^{^{\prime}}.$ Similarly, for $q\in\mathbb{N}$%
\begin{equation}
\mathcal{F}^{\ast}\left\{  H\left(  \mathbf{v}\right)  \delta_{\ln,\infty
}^{\left[  q\right]  }\left(  \mathbf{u}\right)  ;\mathbf{x}\right\}
=\mathfrak{L}_{q}\left\{  H\left(  \mathbf{v}\right)  ;\mathbf{w}\right\}
\delta_{\ast}^{\left[  -n-q\right]  }\left(  \mathbf{x}\right)  \,.
\label{FFpp}%
\end{equation}

\end{example}

\begin{example}
\label{Example 6}We now consider the transform of the general thick deltas
$f\left(  \mathbf{w}\right)  \delta_{\ast}^{\left[  m\right]  }\left(
\mathbf{x}\right)  .$ Let $m=-n-q.$ Different formulas arise depending on $m$
and $q.$ If $1-n\leq m,q\leq-1$\ then $\mathcal{D}_{q}^{^{\prime}}%
=\mathcal{D}_{m}^{^{\prime}}=\mathcal{D}^{^{\prime}}$ so that inversion of
(\ref{FFp 1}), remembering (\ref{a 28}),\ gives
\begin{equation}
\mathcal{F}_{\ast}\left\{  f\left(  \mathbf{w}\right)  \delta_{\ast}^{\left[
m\right]  }\left(  \mathbf{x}\right)  ;\mathbf{u}\right\}  =\mathfrak{K}%
_{m}\left\{  f\left(  \mathbf{w}\right)  ;\mathbf{v}\right\}  \delta_{\infty
}^{\left[  -n-m\right]  }\left(  \mathbf{u}\right)  \,. \label{FFp 2}%
\end{equation}
If $m\geq0,$ that is $q\leq-n,$ we decompose $f\in\mathcal{D}^{\prime}\left(
\mathbb{S}\right)  $ as $f=p_{m}+f_{m}$ where $f_{m}\in\mathcal{D}%
_{q}^{^{\prime}}=\mathcal{D}_{m}^{^{\prime}}$ and $p_{m}\in\mathcal{P}_{m}.$
We now notice that $\mathcal{F}_{\ast}\left(  p\delta_{\ast}^{\left[
m\right]  }\right)  $ is the finite part regularization $\mathcal{P}%
f_{\mathcal{W}}\left(  P_{m}\left(  \mathbf{u}\right)  \right)  $ of a
homogeneous polynomial of degree $m,$ namely $P_{m}=\mathcal{F}\left(
\Pi_{\mathcal{S}_{\ast}^{\prime},\mathcal{S}^{\prime}}\left(  f\delta_{\ast
}^{\left[  m\right]  }\right)  \right)  ,$ obtaining%
\begin{equation}
\mathcal{F}_{\ast}\left\{  f\left(  \mathbf{w}\right)  \delta_{\ast}^{\left[
m\right]  }\left(  \mathbf{x}\right)  ;\mathbf{u}\right\}  =\mathcal{P}%
f_{\mathcal{W}}\left(  P_{m}\left(  \mathbf{u}\right)  \right)  +\mathfrak{K}%
_{m}\left\{  f_{m}\left(  \mathbf{w}\right)  ;\mathbf{v}\right\}
\delta_{\infty}^{\left[  -n-m\right]  }\left(  \mathbf{u}\right)  \,.
\label{FFp 2p}%
\end{equation}
In particular, when $m=0,$ since $\mathcal{F}_{\ast}\left(  \delta_{\ast
}^{\left[  0\right]  }\right)  =\mathcal{P}f_{\mathcal{W}}\left(  1\right)  ,$
we obtain%
\begin{equation}
\mathcal{F}_{\ast}\left\{  f\left(  \mathbf{w}\right)  \delta_{\ast}^{\left[
0\right]  }\left(  \mathbf{x}\right)  ;\mathbf{u}\right\}  =M\mathcal{P}%
f_{\mathcal{W}}\left(  1\right)  +\mathfrak{K}_{0}\left\{  f\left(
\mathbf{w}\right)  -M;\mathbf{v}\right\}  \delta_{\infty}^{\left[  -n\right]
}\left(  \mathbf{u}\right)  \,, \label{FFp 3}%
\end{equation}
where $M$ is the constant $M=\left(  1/C\right)  \left\langle f\left(
\mathbf{w}\right)  ,1\right\rangle .$

Finally, if $m\leq-n,$ i.e. $q\geq0,$ the decomposition $f=p_{m}+f_{m}$ where
$f_{m}\in\mathcal{D}_{q}^{^{\prime}}=\mathcal{D}_{m}^{^{\prime}}$ and
$p_{m}\in\mathcal{P}_{-n-m}=\mathcal{P}_{q}$ yields%
\begin{align}
\mathcal{F}_{\ast}\left\{  f\left(  \mathbf{w}\right)  \delta_{\ast}^{\left[
m\right]  }\left(  \mathbf{x}\right)  ;\mathbf{u}\right\}   &  =\left(
2\pi\right)  ^{n}\mathfrak{L}_{-n-m}^{-1}\left\{  p_{m}\left(  \mathbf{w}%
\right)  ;-\mathbf{v}\right\}  \delta_{\ln,\infty}^{\left[  -n-m\right]
}\left(  \mathbf{u}\right) \label{FFp 4}\\
& \quad +\mathfrak{K}_{m}\left\{  f_{m}\left(  \mathbf{w}\right)  ;\mathbf{v}%
\right\}  \delta_{\infty}^{\left[  -n-m\right]  }\left(  \mathbf{u}\right)
\,.\nonumber
\end{align}

\end{example}

We will consider the thick Fourier transform of several other distributions in
forthcoming papers.

\appendix

\section{Guide to notation\label{Appendix}}

{\small \noindent\textbf{Constants} }

{\small $c_{n.m},$ \ \ \ \ \ \ \ \hfill equation (\ref{ce}) }

{\small $C,$ surface area of the unit sphere,\ \ \ \ \ \ \ \hfill equation
(\ref{ce}) }

{\small \noindent\textbf{Spaces} }

{\small $\mathcal{D}_{\ast,\mathbf{a}}\left(  \mathbb{R}^{n}\right)  ,$
$\mathcal{D}_{\ast}\left(  \mathbb{R}^{n}\right)  ,$ \ \ \ \ \ \ \ \hfill
paragraph before Definition \ref{Definition A} }

{\small $\mathcal{D}_{\ast,\mathbf{a}}^{^{\prime}}\left(  \mathbb{R}%
^{n}\right)  ,$ $\mathcal{D}_{\ast}^{^{\prime}}\left(  \mathbb{R}^{n}\right)
,$ \ \ \ \ \ \ \ \hfill Definition \ref{Definition A} }

{\small $\mathcal{A}_{\ast,\mathbf{a}}\left(  \mathbb{R}^{n}\right)  ,$ in
particular $\mathcal{S}_{\ast,\mathbf{a}}\left(  \mathbb{R}^{n}\right)
,$\ \ \ \ \ \ \ \hfill Definition \ref{Def OS.1} }

{\small $\mathcal{A}_{\ast,\mathbf{a}}^{\prime}\left(  \mathbb{R}^{n}\right)
,$ in particular $\mathcal{S}_{\ast,\mathbf{a}}^{\prime}\left(  \mathbb{R}%
^{n}\right)  ,$\ \ \ \ \ \ \ \hfill Definition \ref{Def OS.1} }

{\small $\mathcal{X}_{q},$ in particular $\mathcal{D}_{q}$ and $\mathcal{D}%
_{q}^{\prime},$ \ \ \ \ \ \ \hfill Subsection
\ref{Subsection: the operators k and l} }

{\small $\mathcal{P}_{q},$ polynomials of degree $q$ on the sphere,
\ \ \ \ \ \hfill Subsection \ref{Subsection: the operators k and l} }

{\small $\mathcal{S}_{\ast,\text{reg}}\left(  \mathbb{R}^{n}\right)  ,$ \hfill
paragraph after equation (\ref{b 1}) }

{\small $\mathcal{W}_{\mathrm{pre}}\left(  \mathbb{R}^{n}\right)  ,$ \hfill
Definition \ref{Definition b 1} }

{\small $\mathcal{W}\left(  \mathbb{R}^{n}\right)  ,$ \hfill Definition
\ref{Definition W 1} }

{\small \noindent\textbf{Operators} }

{\small $\mathcal{K}_{\beta}\left\{  a\left(  \mathbf{w}\right)
;\mathbf{v}\right\}  =\left\langle K_{\beta}\left(  \mathbf{w},\mathbf{v}%
\right)  ,a\left(  \mathbf{w}\right)  \right\rangle _{\mathbf{w}},$
\ \ \ \ \ \ \ \ \ \hfill equation (\ref{a.13p}) }

{\small $\mathcal{L}_{q}\left\{  a\left(  \mathbf{w}\right)  ;\mathbf{v}%
\right\}  =\left\langle L_{q}\left(  \mathbf{w},\mathbf{v}\right)  ,a\left(
\mathbf{w}\right)  \right\rangle _{\mathbf{w}},$ \ \ \ \ \ \ \ \ \ \hfill
equation (\ref{a.16p}) }

{\small $\mathfrak{K}_{q}\left\{  a\left(  \mathbf{w}\right)  ;\mathbf{v}%
\right\}  ,$ \ \ \ \ \ \ \ \ \ \ \hfill Subsection
\ref{Subsection: the operators k and l} }

{\small $\mathfrak{L}_{q}\left\{  a\left(  \mathbf{w}\right)  ;\mathbf{v}%
\right\}  ,$ \ \ \ \ \ \ \ \ \ \ \hfill Subsection
\ref{Subsection: the operators k and l} }

{\small delta part, \hfill Subsection
\ref{Subsection:Delta parts and polynomial parts} }

{\small ordinary part, \hfill Subsection
\ref{Subsection:Delta parts and polynomial parts} }

{\small polynomial part, \hfill Subsection
\ref{Subsection:Delta parts and polynomial parts} }

{\small polynomial free part, \hfill Subsection
\ref{Subsection:Delta parts and polynomial parts} }

{\small $\mathcal{F}_{\text{t}}^{\ast}\left\{  \Phi\left(  \mathbf{u}\right)
;\mathbf{x}\right\}  $ and $\mathcal{F}_{\ast,\text{t}}\left\{  \Phi\left(
\mathbf{u}\right)  ;\mathbf{x}\right\}  ,$ Fourier transform of test
functions,\hfill equation \eqref{W 3} }

{\small $\tau_{A}^{\mathcal{W}}$ and $\tau_{A}^{\mathcal{W}^{\prime}},$ \hfill
Subsection \ref{Subsection: Linear changes of variables in} }

{\small $M_{u_{j}}^{\mathcal{W}}$ and $M_{u_{j}}^{\mathcal{W}^{\prime}},$
\hfill Subsection \ref{Subsection: Multiplication by polynomials in} }

{\small $\mathcal{F}^{\ast}\left\{  f\left(  \mathbf{x}\right)  ;\mathbf{u}%
\right\}  ,$ Fourier transform of thick distributions,\hfill equation
(\ref{W 4}) }

{\small $\mathcal{F}_{\ast}\left\{  g\left(  \mathbf{u}\right)  ;\mathbf{x}%
\right\}  ,$ Fourier transform of $sl-$thick distribution,\hfill equation
(\ref{W 5}) }

{\small \noindent\textbf{Distributions} }

{\small $g\left(  \mathbf{w}\right)  \delta_{\ast}^{\left[  q\right]  }\left(
\mathbf{x-a}\right)  ,$ \ \ \ \ \ \ \ \ \ \ \ \hfill equation (\ref{thick}) }

{\small $\mathcal{P}f\left\{  g\left(  \mathbf{z}\right)  ;\mathbf{x}\right\}
=\mathcal{P}f\left(  g\right)  ,$ \ \ \ \ \ \ \ \ \ \hfill Definition
\ref{Definition Finite Part} }

{\small $G\left(  \mathbf{v}\right)  \delta_{\infty}^{\left[  q\right]  },$
\hfill Definition \ref{Def Wp 1} }

{\small $H\left(  \mathbf{v}\right)  \delta_{\ln,\infty}^{\left[  q\right]
},$ \hfill Definition \ref{Def Wp 1} }

{\small $\mathcal{P}f_{\mathcal{W}}\left(  g\right)  ,$ \hfill paragraph after
Definition \ref{Def Wp 1}}

\end{document}